\documentclass[11pt,reqno]{amsart}
\usepackage[margin=1.1in]{geometry}
\usepackage{amsmath,amssymb,amsthm,mathtools}
\usepackage[activate={true,nocompatibility},final,tracking=false,kerning=false,spacing=false,factor=1100,stretch=10,shrink=10,expansion=false]{microtype}
\usepackage{enumitem}
\usepackage{xcolor}
\usepackage{tikz}
\usetikzlibrary{cd,positioning}
\usepackage[colorlinks=true,linkcolor=blue!60!black,citecolor=blue!60!black,urlcolor=blue!60!black]{hyperref}

\theoremstyle{plain}
\newtheorem{theorem}{Theorem}[section]
\newtheorem{proposition}[theorem]{Proposition}
\newtheorem{lemma}[theorem]{Lemma}
\newtheorem{corollary}[theorem]{Corollary}
\theoremstyle{definition}
\newtheorem{definition}[theorem]{Definition}
\newtheorem{example}[theorem]{Example}
\newtheorem{remark}[theorem]{Remark}
\newtheorem{convention}[theorem]{Convention}

\newcommand{\g}{\mathfrak{g}}
\newcommand{\gdual}{\mathfrak{g}^{\vee}}
\newcommand{\R}{\mathbb{R}}
\newcommand{\Q}{\mathbb{Q}}
\newcommand{\Z}{\mathbb{Z}}
\newcommand{\dF}{d_{F}}

\newcommand{\Om}{\Omega}
\newcommand{\Ham}{\mathrm{Ham}}
\newcommand{\Lie}{L_{\infty}}
\newcommand{\rvp}{\varpi}
\newcommand{\io}{\iota}
\newcommand{\Lder}{\mathcal{L}}
\newcommand{\vs}{\varsigma}
\newcommand{\XF}{\mathfrak{X}(F)}
\newcommand{\id}{\operatorname{id}}
\newcommand{\Ad}{\operatorname{Ad}}
\newcommand{\thL}{\theta^{L}}
\newcommand{\thR}{\theta^{R}}
\newcommand{\ip}[2]{\langle #1,\,#2\rangle}
\newcommand{\pair}[2]{\left\langle #1,\,#2\right\rangle}
\newcommand{\hol}{\operatorname{hol}}
\newcommand{\btot}{\mathbf{d}}
\providecommand{\Hom}{\operatorname{Hom}}
\newcommand{\curv}{\operatorname{curv}}

\numberwithin{equation}{section}

\begin{document}

\title[Periods, prequantization, and rigidity]{Periods, prequantization, and rigidity\\ in relative multisymplectic geometry}

\author{Dinamo Djounvouna}
\address{Department of Mathematics, University of Manitoba,
Winnipeg, MB R3T 2N2, Canada}
\email{djounvod@myumanitoba.ca}

\subjclass[2020]{Primary 53D20; Secondary 53D05, 53D50, 55N20, 57R19, 58A12, 70S05, 81S10}
\keywords{relative multisymplectic geometry, mapping cone, Wess--Zumino term,
prequantization, relative gerbe, Noether theorem, moment map, quasi-Hamiltonian
$G$-space, moduli of flat connections, Bohr--Sommerfeld condition, relative
period, weak and strong nondegeneracy}

\begin{abstract}
Relative multisymplectic geometry replaces differential forms on a single
manifold by cocycles in the mapping cone of a smooth map $F\colon M\to N$.
Building on the relative Cartan calculus, the Lie $n$-algebras of relative
observables, and the relative homotopy moment maps developed in companion work,
we establish a range of applications showing that the framework is a working
tool rather than a formal generalization.

We first construct the integration pairing between relative differential forms
and smooth relative chains, and prove two structural results that make it
usable: a period criterion, reducing relative integrality to the periods of the
target form together with a defect homomorphism on the classes killed by $F_*$,
and a functoriality theorem for morphisms of arrows. The criterion yields a
structure theorem for levels: the integers $k$ at which $k\rvp$ is relatively
integral always form a cyclic group $k_0\Z$, with no hypothesis on $F$, and
$k_0$ is computable from finitely many integrals whenever the relevant homology
is finitely generated. These yield a
characterization of homotopy-invariant bulk--boundary action functionals, hence
a precise treatment of Wess--Zumino terms; a relative Weil--Kostant theorem,
whose specialization to a Lagrangian submanifold is the Bohr--Sommerfeld
condition of geometric quantization; a relative Noether identity, with a
bulk--boundary splitting of the conserved charges; and a rigidity theorem making
comoment maps unique, strict and equivariant, so that the Kostant--Souriau
cocycle disappears. Two closing sections analyse the degenerate edges
$M=\varnothing$ and $N=\mathrm{pt}$, at which the absolute theories are
recovered, and separate weak from strong nondegeneracy, determining which
results require which.
\end{abstract}

\maketitle
\setcounter{tocdepth}{2}
\tableofcontents

\section{Introduction}\label{sec:intro}

Classical multisymplectic geometry describes higher Hamiltonian systems by
means of closed nondegenerate differential forms.  A large class of geometric
constructions arising in field theory and symplectic geometry is, however,
intrinsically relative: the relevant data live not on one manifold but on a
smooth map $F\colon M\to N$.  Wess--Zumino terms, prequantum data equipped with
a prescribed trivialization, bulk--boundary action functionals, and
group-valued moment maps all have this form.

The appropriate differential complex is the mapping cone
\[
  \Om^{k}(F):=\Om^{k}(N)\oplus\Om^{k-1}(M),
  \qquad
  \dF(\omega,\eta)=(d\omega,\,F^*\omega-d\eta).
\]
Thus a closed relative form is a pair
$\rvp=(\omega,\eta)$ consisting of a closed form on $N$ and a coherent
trivialization of its pullback to $M$.  The central point is that the geometry
is carried by the arrow $F$, rather than by either endpoint separately.  For
example, a Wess--Zumino functional pairs a target cocycle with a bulk filling
while the source form records the boundary correction; similarly, the
quasi-Hamiltonian identity $d\omega=\mu^*\eta$ says precisely that the pair
$(\eta,\omega)$ is closed in the mapping cone of the group-valued moment map
$\mu\colon M\to G$.

This relative viewpoint is more than a convenient packaging of two absolute
objects.  It supplies the correct chain complex, Stokes formula, period group,
and cohomological notion of exactness for bulk--boundary problems.  As a
result, topological actions, prequantization, conserved charges, and moment-map
rigidity can be derived from one common formalism rather than treated by
separate ad hoc arguments.

In \cite{DjThesis} the author developed the differential geometry of such
objects systematically: the mapping cone $\Om^\bullet(F)$ carries a full
\emph{relative Cartan calculus} in which all classical identities hold on the nose;
closed nondegenerate relative $(n+1)$-forms (\emph{relative $n$-plectic structures})
possess Lie $n$-algebras of \emph{relative observables} $\Lie(F,\rvp)$
generalizing the construction of Rogers \cite{Rogers}; and quasi-Hamiltonian
$G$-spaces fit the framework as relative $2$-plectic maps. The companion paper
\cite{RelHMM} developed the accompanying moment map theory -- \emph{relative
homotopy moment maps}, generalizing Callies--Fr\'egier--Rogers--Zambon \cite{CFRZ} --
including component equations, a relative Cartan model, explicit
extension-to-moment-map formulas, and an existence theory in which the Lie-algebra
cohomology obstruction of the absolute theory vanishes identically.

The present article is devoted to \emph{applications}. Our aim is to show that the
relative framework is not a formal generalization but a working tool: it isolates
mechanisms that are invisible, or must be treated ad hoc, in the absolute theory.
The applications rest on a small amount of shared infrastructure -- a relative
integration theory -- which we develop in full; the statements and proofs of
the applications themselves are complete modulo explicitly cited results from
the companion papers \cite{DjThesis,RelHMM} (the relative Cartan calculus, the
Lie $n$-algebra of observables, and the relative homotopy moment-map
equations) and from the classical quasi-Hamiltonian literature
\cite{AMM,MeinrenkenLectures}, and we indicate at each point of use exactly
what is imported.

The results of the paper are of several kinds --- new theorems, formal
consequences of the relative Cartan calculus, reinterpretations of known
constructions, and expository synthesis --- and it seems only fair to say
which is which at the outset:
\begin{center}
\renewcommand{\arraystretch}{1.25}
\begin{tabular}{@{}lp{0.26\textwidth}p{0.5\textwidth}@{}}
\hline
\textbf{Section} & \textbf{Topic} & \textbf{Status}\\
\hline
\ref{sec:integration} & Relative integration, periods, functoriality &
cycles, Stokes and periods are standard mapping-cone theory; the period
criterion \textup{(}Theorem~\ref{thm:period-criterion}\textup{)}, the level
group \textup{(}Proposition~\ref{prop:level-cyclic} and
Theorem~\ref{thm:level-group}\textup{)} and the functoriality
theorem \textup{(}Theorem~\ref{thm:functoriality}\textup{)} are new and are used
throughout\\
\ref{sec:actions} & Topological actions &
new systematic formulation; the converse
\textup{(}Theorem~\ref{thm:topterm}(iii)\textup{)} is proved here via
localized test fields\\
\ref{sec:prequantization} & Relative Weil--Kostant &
principal new theorem, with a complete \v{C}ech mapping-cone proof; the
Bohr--Sommerfeld specialization \textup{(}Theorem~\ref{thm:bohr-sommerfeld}\textup{)}
is classical in substance but, so far as we know, new in this formulation, and the relative-gerbe subsection recalls existing
literature\\
\ref{sec:noether} & Relative Noether &
new formal application of the relative Cartan calculus, with the scope
noted there\\
\ref{sec:rigidity} & Rigidity &
principal new structural theorem\\
\ref{sec:qham} & Fusion and moduli &
illustration: consequences and reinterpretations of \cite{AMM} and
\cite{RelHMM} on the unreduced representation space\\
App.~\ref{sec:edges} & The two edges &
consistency analysis; the specializations are classical, but the degeneration of
nondegeneracy \textup{(}Proposition~\ref{prop:edge-nondeg}\textup{)}, the degree
shift, and the sharpness of Theorem~\ref{thm:rigidity} are recorded here for the
first time\\
\ref{sec:nondeg} & Weak versus strong nondegeneracy &
new: the comparison theorem \textup{(}Theorem~\ref{thm:weak-vs-strong}\textup{)},
the ideal property of the relative kernel, the audit of which results need which
hypothesis, and the corrections of
Remarks~\ref{rem:fix-higher}--\ref{rem:fix-rigidity}\\
\hline
\end{tabular}
\end{center}

\subsection{The applications}
After recalling the necessary background (Section~\ref{sec:background}) and
constructing relative cycles, the relative Stokes theorem, and relative periods
(Section~\ref{sec:integration}), we prove:

\subsubsection*{0. Periods and functoriality (Section~\ref{sec:integration})}
Two results in the integration section are used by all five applications and are
stated here for reference. The \emph{period criterion}
(Theorem~\ref{thm:period-criterion}) shows that the group $P_\rvp$ of relative
periods is determined by the absolute periods $P_\omega$ of the target form
together with a homomorphism
\[
  \Theta_\rvp\colon
  \ker\bigl(F_*\colon H_n(M;\Z)\to H_n(N;\Z)\bigr)\longrightarrow\R/P_\omega,
\]
so that relative integrality becomes a finite check: $P_\omega\subseteq\Z$ and
$\Theta_\rvp$ integral. Iterating this over levels gives the \emph{level group}: the set of
$k\in\Z$ for which $k\rvp$ is relatively integral is always a subgroup
$k_0\Z$ of $\Z$, with no hypothesis whatever on $F$
(Proposition~\ref{prop:level-cyclic}), and under finite generation $k_0$ is
computed by finitely many integrals followed by one least common multiple
(Theorem~\ref{thm:level-group}). This explains
structurally why the admissible levels in the quasi-Hamiltonian literature
always form a cyclic group \cite{Krepski}, and isolates the dichotomy: either
some period is irrational and no level works at all, or all are rational and the
admissible levels are exactly the multiples of one integer. In particular the source contributes nothing when $F_*$
is injective in degree $n$, and everything when the target is acyclic in degree
$n{+}1$. The \emph{functoriality theorem} (Theorem~\ref{thm:functoriality})
records that a commutative square induces compatible maps on relative forms and
relative chains, adjoint for the pairing, and hence transports integrality,
prequantizations and moment maps; several later compatibility statements are
instances of it (Remark~\ref{rem:functoriality-uses}).

\subsubsection*{A. Topological terms of action functionals
(Section~\ref{sec:actions})}
For a compact oriented $(n{+}1)$-manifold $W$ with boundary, a \emph{relative field}
is a pair $u\colon W\to N$, $v\colon\partial W\to M$ with $F\circ v=u|_{\partial W}$;
its action is $S_\rvp(u,v)=\displaystyle \int_W u^*\omega-\int_{\partial W}v^*\eta$. We show
(Theorem~\ref{thm:topterm}) that a relative field is canonically a relative cycle,
that $S_\rvp$ is the pairing of $[\rvp]$ with this cycle, and hence that
$S_\rvp$ is a \emph{topological term}: invariant under all homotopies of relative
fields, and identically zero when $\rvp$ is $\dF$-exact. For a closed
source $\Sigma$ we prove (Theorem~\ref{thm:WZ}) a three-level well-definedness
statement that identifies the exact ambiguity subgroups: for a fixed boundary
field, $\exp(2\pi i\,\mathrm{WZ})$ is well defined precisely when the
\emph{absolute target periods} are integral; and the phase extends to all
relative cycles --- the condition the geometric realizations require ---
precisely under \emph{full relative integrality}, recovering, for
quasi-Hamiltonian spaces, the level-quantization criterion of group-valued
moment map theory \cite{MeinrenkenLectures,Krepski}.

\subsubsection*{B. Relative prequantization (Section~\ref{sec:prequantization})}
For $n=1$ we prove a relative Weil--Kostant theorem
(Theorem~\ref{thm:prequantization}): a closed relative $2$-form $(\omega,\eta)$
admits a \emph{relative prequantization} -- a Hermitian line bundle with connection
$(L,\nabla)$ on $N$ of curvature $-2\pi i\,\omega$ together with a unit section $s$
of $F^*L$ with $F^*\nabla\,s=-2\pi i\,\eta\otimes s$ -- if and only if the class
$[\rvp]\in H^2(\Om(F))$ is integral. Necessity is proved by a holonomy argument
over relative cycles; sufficiency by an explicit \v{C}ech construction on the
mapping cone. The set of equivalence classes of relative prequantizations, when nonempty, is a
torsor for $H^1(F;U(1))$. Specializing to the inclusion of a Lagrangian
submanifold $L\subseteq N$ with $\rvp=(\omega,0)$ recovers the
\emph{Bohr--Sommerfeld} condition of geometric quantization, the section datum
becoming a flat trivialization over $L$ and the torsor becoming
$H^1(N,L;U(1))$ (Theorem~\ref{thm:bohr-sommerfeld}); that a foundational notion of
geometric quantization is a one-line specialization is some evidence that the
framework is doing work. In degree $3$ the same integrality governs relative
gerbes and the prequantization of quasi-Hamiltonian $G$-spaces
\cite{ShahbaziGerbes,Shahbazi,Krepski,MeinrenkenLectures}, which we explain
(Section~\ref{subsec:gerbes}).

\subsubsection*{C. Conserved quantities and boundary charges
(Section~\ref{sec:noether})}
We prove a relative Noether theorem (Theorem~\ref{thm:noether}): if a Hamiltonian
$F$-pair $v_\sigma$ generates the dynamics and $v_x$ is a Hamiltonian symmetry
preserving $\sigma$, then $\Lder_{v_\sigma}f_1(x)$ is $\dF$-exact with explicit
primitive; consequently the \emph{charges} $Q_x(Z)=\pair{f_1(x)}{Z}$ over relative
$(n{-}1)$-cycles $Z$ are conserved (Corollary~\ref{cor:charges}). The splitting of a
relative cycle into an $N$-chain and an $M$-chain realizes the familiar
bulk-minus-boundary structure of physical charges.

\subsubsection*{D. Rigidity of relative comoment maps
(Section~\ref{sec:rigidity})}
In the relative symplectic case ($n=1$) we prove
(Theorem~\ref{thm:rigidity}) that when $N$ is connected and $M\neq\emptyset$, a
comoment map for a preserving action, if it exists, is \emph{unique},
\emph{automatically a strict Lie algebra morphism}, and \emph{automatically
infinitesimally equivariant}: the Kostant--Souriau cocycle vanishes because the
relative complex has no constants, $H^0(\Om(F))=0$. As an illustration we show that
the canonical equivariant moment map of a cotangent-lifted action is an instance of
this rigidity (Example~\ref{ex:cotangent}).

\subsubsection*{E. Quasi-Hamiltonian geometry, fusion, and moduli of flat
connections (Section~\ref{sec:qham})}
Every quasi-Hamiltonian $G$-space carries a canonical relative homotopy moment map,
whose data live entirely on the group \cite{RelHMM}. We prove that this canonical
structure is natural under fusion (Proposition~\ref{prop:fusion}) and under
equivariant maps of quasi-Hamiltonian spaces, and we spell out the consequence for
the moduli spaces of flat $G$-connections on compact surfaces with boundary
(Corollary~\ref{cor:moduli}), where the boundary holonomy is the group-valued moment
map \cite{AMM,AtiyahBott}.

\subsubsection*{Edges. Recovering the absolute theories
(Appendix~\ref{sec:edges})}
Finally we specialize everything to $M=\varnothing$ and to $N=\mathrm{pt}$. Each
application returns the expected absolute statement, but two features deserve
attention: the relative nondegeneracy condition is vacuous at the target edge,
so that edge recovers \emph{pre}-multisymplectic geometry rather than
multisymplectic geometry (Proposition~\ref{prop:edge-nondeg}), and at the source
edge the plectic degree drops by one while the pairing acquires a global sign
(Proposition~\ref{prop:edge-complexes}). The rigidity theorem \emph{fails} at
the target edge, which shows its hypotheses to be sharp rather than technical,
and the defect homomorphism vanishes at both edges, localizing the new content
of the theory strictly in the interior.

\subsubsection*{Nondegeneracy. Weak versus strong (Section~\ref{sec:nondeg})}
The nondegeneracy condition used throughout tests injectivity only along the
source. We compare it with the stronger requirement that the contraction be
injective on $F$-pairs, and prove that the difference is exactly the
nondegeneracy of $\omega$ off $\overline{\operatorname{im}F}$
(Theorem~\ref{thm:weak-vs-strong}). The relative kernel is an ideal annihilated
by every multicontraction (Proposition~\ref{prop:kernel-ideal}), so the weak
condition suffices for the whole $L_\infty$-package -- the brackets, the
moment-map component equations, the Noether identity and the charges -- while
the strong one is needed only where a Hamiltonian $F$-pair must be a vector
field on $N$. The two conditions coincide whenever $F$ has dense image, and in
particular for every quasi-Hamiltonian space over a semisimple group
(Corollary~\ref{cor:qham-nondeg}), which is why the distinction has not had to
be made before. Two proofs in this article use the strong condition tacitly;
we identify them and restate their hypotheses
(Remarks~\ref{rem:fix-higher} and \ref{rem:fix-rigidity}).

The logical structure of the paper is displayed in
Figure~\ref{fig:roadmap}: a single relative integration theory
(Section~\ref{sec:integration}) feeds all five applications, which
otherwise proceed independently, so the reader may take up any of
Sections~\ref{sec:actions}--\ref{sec:qham} directly after
Section~\ref{sec:integration}.
\begin{figure}[htb]
\centering
\begin{tikzcd}[column sep=1.1em, row sep=2.0em,
  /tikz/every label/.append style={font=\footnotesize}]
& & \boxed{\substack{\text{relative integration}\\
  \text{cycles, Stokes, periods (\S\ref{sec:integration})}}}
  \arrow[dll] \arrow[dl] \arrow[d] \arrow[dr] \arrow[drr] & & \\
\boxed{\substack{\text{A. topological}\\\text{actions (\S\ref{sec:actions})}}}
& \boxed{\substack{\text{B. prequant-}\\\text{ization (\S\ref{sec:prequantization})}}}
& \boxed{\substack{\text{C. Noether}\\\text{charges (\S\ref{sec:noether})}}}
& \boxed{\substack{\text{D. comoment}\\\text{rigidity (\S\ref{sec:rigidity})}}}
& \boxed{\substack{\text{E. quasi-Ham.}\\\text{moduli (\S\ref{sec:qham})}}}
\end{tikzcd}
\caption{The relative integration theory of
Section~\ref{sec:integration} -- cycles, Stokes, the period criterion and
functoriality -- is the shared engine; the five applications are logically
independent branches and may be read in any order.}
\label{fig:roadmap}
\end{figure}

\subsection{Relation to the literature}
The integration theory of Section~\ref{sec:integration} is classical homological
algebra adapted to the smooth mapping cone; its specializations recover
Poincar\'e--Lefschetz-type pairings \cite{BottTu}. The observation that Wess--Zumino
terms are governed by relative (co)cycles is implicit in the physics literature and
in the prequantization results of \cite{Shahbazi,Krepski,LGX,MeinrenkenLectures};
our contribution is to derive these mechanisms uniformly from the relative
multisymplectic calculus of \cite{DjThesis,RelHMM}, at arbitrary degree, with moment
maps included. The rigidity theorem of Section~\ref{sec:rigidity} sharpens, in the
relative setting, classical facts about moment map cocycles \cite{Kostant}; its
higher analogue is the obstruction-vanishing theorem of \cite{RelHMM}.

\subsection{Conventions}
We use throughout the conventions of \cite{CFRZ,RelHMM}, summarized in
\cite[App.~A]{RelHMM}: $\vs(k)=-(-1)^{k(k+1)/2}$; fundamental vector fields
$v_x|_p=\frac{d}{dt}\big|_0\exp(-tx)\cdot p$, so that $[v_x,v_y]=v_{[x,y]}$;
multicontractions $\io(v_1\wedge\dots\wedge v_k)=\io_{v_k}\cdots\io_{v_1}$. All
chains are smooth singular chains with $\Z$- or $\R$-coefficients as indicated.
The sign-sensitive formulas involving relative moment maps and higher
contractions --- the component equations of the relative homotopy moment map,
the Noether identity~\eqref{eq:noether}, and the fusion
formula (Proposition~\ref{prop:fusion}) --- have been re-verified against the two boundary
reductions $M=\varnothing$ (absolute multisymplectic geometry \cite{CFRZ,Rogers})
and $N=\mathrm{pt}$ (the source theory of \cite{DjThesis}), on which they must
return the known signs; each reduces correctly. These two reductions are
analysed systematically in Section~\ref{sec:edges}, which is where the
verification just described is carried out.

\subsection{Notation}\label{subsec:notation}
The following symbols are used throughout. Homology and cohomology of spaces
carry $\Z$ coefficients unless indicated otherwise; period groups and
integration pairings take values in $\R$.

\begin{center}
\renewcommand{\arraystretch}{1.25}
\begin{tabular}{@{}ll@{}}
\hline
$\Om^k(F)=\Om^k(N)\oplus\Om^{k-1}(M)$ & relative $k$-forms, the mapping cone of $F^*$\\
$\dF(\alpha,\beta)=(d\alpha,\,F^*\alpha-d\beta)$ & relative de Rham differential\\
$\rvp=(\omega,\eta)$ & a $\dF$-closed relative $(n{+}1)$-form\\
$C_k(F)=C_k(N)\oplus C_{k-1}(M)$ & smooth relative chains, $\Z$ coefficients\\
$\pair{\cdot}{\cdot}$ & integration pairing, $\pair{(\alpha,\beta)}{(S,T)}=\int_S\alpha-\int_T\beta$\\
$P_\omega\subseteq\R$ & absolute periods of $\omega$ on $H_{n+1}(N;\Z)$\\
$P_\rvp\subseteq\R$ & relative periods of $\rvp$ on $Z_{n+1}(F)$\\
$K=\ker\bigl(F_*\colon H_n(M;\Z)\to H_n(N;\Z)\bigr)$ & source classes dying in the target\\
$\Theta_\rvp\colon K\to\R/P_\omega$ & defect homomorphism\\
$L(\rvp)=k_0\Z\subseteq\Z$ & level group and prequantization level\\
$\XF$ & $F$-pairs of vector fields\\
$\mathfrak K(F,\rvp)$ & relative kernel\\
$\Lie(F,\rvp)$ & $L_\infty$-algebra of relative observables\\
$H^1(F;U(1))$ & flat relative data; torsor for prequantizations\\
\hline
\end{tabular}
\end{center}

\section{Background from relative multisymplectic geometry}
\label{sec:background}

\begin{remark}[Bibliographical notes]\label{rem:biblio-background}
Multisymplectic geometry in the sense used here goes back to
\cite{CIL,GIMMSY}; the $L_\infty$-algebra of observables is due to Rogers
\cite{Rogers,RogersThesis}, with the Lie $2$-algebra case treated in
\cite{BHR,BaezRogers} and the general homotopy moment map in \cite{CFRZ,FLZ}.
For a survey we recommend \cite{RWinvitation}. Multi-moment maps for closed
forms are developed independently in \cite{MS}, existence and uniqueness of
comoments in \cite{RW}, conserved quantities in \cite{RWZ}, and reduction in
\cite{Blacker,BMR}. The relative theory used below was developed in
\cite{DjThesis,RelHMM,RelRed}.
\end{remark}

We briefly recall the objects and results from \cite{DjThesis,RelHMM} used in the
sequel; proofs may be found there.

\subsection{The relative Cartan calculus}
Let $F\colon M\to N$ be smooth. The \emph{relative de Rham complex} is the mapping
cone
\begin{equation}\label{eq:cone}
  \Om^k(F):=\Om^k(N)\oplus\Om^{k-1}(M),
  \qquad
  \dF(\alpha,\beta):=(d\alpha,\;F^*\alpha-d\beta),
\end{equation}
whose cohomology $H^\bullet(\Om(F))$ sits in a long exact sequence
\begin{equation}\label{eq:LES}
  \cdots\to H^{k-1}_{\mathrm{dR}}(M)\to H^{k}(\Om(F))\to
  H^{k}_{\mathrm{dR}}(N)\xrightarrow{F^*}H^{k}_{\mathrm{dR}}(M)\to\cdots .
\end{equation}
An \emph{$F$-pair} of vector fields is $v=(v_N,v_M)$ with $v_M,v_N$ $F$-related; the
space $\XF$ of $F$-pairs is a Lie algebra under the componentwise bracket. The
\emph{relative contraction} and \emph{Lie derivative}
\begin{equation}\label{eq:reliota}
  \io_v(\alpha,\beta)=(\io_{v_N}\alpha,\,-\io_{v_M}\beta),
  \qquad
  \Lder_v(\alpha,\beta)=(\Lder_{v_N}\alpha,\,\Lder_{v_M}\beta),
\end{equation}
satisfy all Cartan identities, in particular the magic formula
$\Lder_v=\dF\io_v+\io_v\dF$ and $[\Lder_u,\io_v]=\io_{[u,v]}$
\cite[Prop.~3.5]{RelHMM}, together with the \emph{master identity}
\cite[Lem.~3.7]{RelHMM}, of which we will use the closed-invariant case: if
$\dF\Psi=0$ and $\Lder_{v_i}\Psi=0$, then
\begin{equation}\label{eq:master}
  \dF\,\io(v_1\wedge\dots\wedge v_m)\Psi
  =(-1)^m\!\!\sum_{1\le i<j\le m}\!\!(-1)^{i+j}
  \io\bigl([v_i,v_j]\wedge v_1\wedge\dots\widehat{v_i}\dots\widehat{v_j}\dots\wedge
  v_m\bigr)\Psi .
\end{equation}

The following elementary but decisive fact will be used repeatedly
\cite[Rem.~3.2]{RelHMM}:
\begin{equation}\label{eq:H0}
  \Om^0(F)=C^\infty(N),\quad
  \dF h=(dh,F^*h);\qquad
  \text{$N$ connected, $M\neq\emptyset$}\ \Longrightarrow\ H^0(\Om(F))=0 .
\end{equation}

Before proceeding we record the dictionary that organizes the rest of the
article. Each row is an instance of the same principle: an absolute object on a
single manifold is replaced by a cocycle in the mapping cone of $F$, and each
classical theorem has a relative counterpart in which the target and source
contributions appear with opposite signs.

\begin{center}
\renewcommand{\arraystretch}{1.28}
\begin{tabular}{@{}p{0.30\textwidth}p{0.36\textwidth}p{0.26\textwidth}@{}}
\hline
\textbf{Absolute} & \textbf{Relative} & \textbf{Where} \\
\hline
form $\alpha\in\Om^k(P)$ &
pair $(\alpha,\beta)\in\Om^k(N)\oplus\Om^{k-1}(M)$ &
\eqref{eq:cone}\\
$d\alpha=0$ &
$d\alpha=0$ and $F^*\alpha=d\beta$ &
\eqref{eq:cone}\\
chain $S\in C_k(P)$ &
pair $(S,T)\in C_k(N)\oplus C_{k-1}(M)$ &
Def.~\ref{def:relchains}\\
$\int_S\alpha$ &
$\int_S\alpha-\int_T\beta$ &
\eqref{eq:pairing}\\
Stokes &
relative Stokes &
Prop.~\ref{prop:stokes}\\
periods of $\alpha$ &
$P_\omega$ and the defect homomorphism $\Theta_\rvp$ &
Thm.~\ref{thm:period-criterion}\\
Weil--Kostant &
relative Weil--Kostant &
Thm.~\ref{thm:prequantization}\\
prequantum bundle &
bundle on $N$ $+$ trivialization on $M$ &
Def.~\ref{def:relprequant}\\
Noether charge &
bulk charge $-$ boundary charge &
Cor.~\ref{cor:charges}\\
Kostant--Souriau cocycle &
absent, since $H^0(\Om(F))=0$ &
Thm.~\ref{thm:rigidity}\\
\hline
\end{tabular}
\end{center}

\subsection{Relative $n$-plectic structures, observables, moment maps}
A \emph{relative pre-$n$-plectic structure} is a $\dF$-closed
$\rvp=(\omega,\eta)\in\Om^{n+1}(F)$; it is \emph{$n$-plectic} if
$w\mapsto(\io_{T F(w)}\omega,\io_w\eta)$ is injective on each $T_mM$. A relative
$(n{-}1)$-form $\sigma$ is \emph{Hamiltonian} if $\dF\sigma=-\io_{v_\sigma}\rvp$
for some $F$-pair $v_\sigma$; Hamiltonian forms are the degree-zero part of a Lie
$n$-algebra $\Lie(F,\rvp)$ with $l_1=\dF$ and
$l_k=\vs(k)\,\io(v_{\sigma_1}\wedge\dots\wedge v_{\sigma_k})\rvp$ in degree zero
\cite{DjThesis}. If a Lie group $G$ acts on $M$ and $N$ with $F$ equivariant,
preserving $\rvp$, a \emph{relative homotopy moment map} is an
$L_\infty$-morphism $(f)\colon\g\rightsquigarrow\Lie(F,\rvp)$ lifting
$x\mapsto v_x=(v_x^N,v_x^M)$; equivalently \cite[Prop.~5.3]{RelHMM}, a family
$f_k\colon\Lambda^k\g\to\Om^{n-k}(F)$, $1\le k\le n$, with
\begin{equation}\label{eq:componenteqs}
  \sum_{i<j}(-1)^{i+j+1}
  f_{k-1}([x_i,x_j],\dots)
  =\dF f_k(x_1,\dots,x_k)+\vs(k)\,\io(v_{x_1}\wedge\dots\wedge v_{x_k})\rvp,
  \qquad 1\le k\le n+1 .
\end{equation}
A \emph{one-step extension} of $\rvp$ is a $G$-equivariant linear map
$\mathrm{M}\colon\g\to\Om^{n-1}(F)$ with $\dF\mathrm{M}(x)=-\io_{v_x}\rvp$ and
$\io_{v_x}\mathrm{M}(x)=0$; it induces the relative homotopy moment map
\cite[Thm.~6.5]{RelHMM}
\begin{equation}\label{eq:onestep}
  f_k(x_1,\dots,x_k)=\vs(k)\,
  \io(v_{x_1}\wedge\dots\wedge v_{x_{k-1}})\,\mathrm{M}(x_k).
\end{equation}
Finally, if $N$ is connected, $M\neq\emptyset$ and $H^i(\Om(F))=0$ for
$1\le i\le n-1$, every Hamiltonian lift extends to a relative homotopy moment map,
with no Lie-algebra cohomology obstruction \cite[Thm.~7.2]{RelHMM}. For the
absolute theory the corresponding obstruction is analysed cohomologically in
\cite{FLZ}, and the closely related theory of multi-moment maps for closed forms
is developed in \cite{MS}; the relative statements below should be read against
those benchmarks.

\subsection{Quasi-Hamiltonian $G$-spaces}
Let $G$ be compact connected with an $\Ad$-invariant inner product, $\eta$ the
Cartan $3$-form, and $\widetilde\mu(x)=\tfrac12\ip{\thL+\thR}{x}$, so that
$\tfrac12 d\ip{\thL+\thR}{x}=-\io_{v_x}\eta$ for the conjugation action. A
quasi-Hamiltonian $G$-space $(M,\omega,\mu)$ \cite{AMM} -- axioms
(QH1) $d\omega=\mu^*\eta$, (QH2) $\io_{v_x}\omega=\tfrac12\mu^*\ip{\thL+\thR}{x}$,
(QH3) minimal degeneracy, in the conventions of \cite[\S8]{RelHMM} -- is the same
thing as a relative $2$-plectic map $(\mu,\rvp)$, $\rvp=(\eta,\omega)$, for
which
\begin{equation}\label{eq:qhamM}
  \mathrm{M}(x)=\bigl(\widetilde\mu(x),\,0\bigr)
\end{equation}
is a one-step extension; the induced canonical relative homotopy moment map is
\begin{equation}\label{eq:qhamf}
  f_1(x)=\bigl(\widetilde\mu(x),0\bigr),
  \qquad
  f_2(x,y)=\Bigl(\tfrac12\ip{(\Ad_{(\cdot)}-\Ad_{(\cdot)^{-1}})x}{y},\,0\Bigr)
\end{equation}
\cite[Thms.~8.4--8.5]{RelHMM}.

\section{Relative cycles, integration, and periods}
\label{sec:integration}

The homological algebra of mapping cones is standard; we follow \cite{Weibel}
for the algebraic side and \cite{Hatcher,BottTu} for the topological one, and we
fix conventions rather than reprove.

We now construct the homological counterpart of \eqref{eq:cone}. Let $C_k(P)$
denote smooth singular $k$-chains on a manifold $P$ (coefficients in $\Z$ unless
stated otherwise), with boundary $\partial$.

\begin{definition}\label{def:relchains}
The \emph{relative chain complex} of $F\colon M\to N$ is
\begin{equation}\label{eq:relchains}
  C_k(F):=C_k(N)\oplus C_{k-1}(M),
  \qquad
  \partial_F(S,T):=\bigl(\partial S-F_*T,\;-\partial T\bigr).
\end{equation}
A \emph{relative $k$-cycle} is a pair $(S,T)$ with $\partial S=F_*T$ and
$\partial T=0$; we write $Z_k(F)$ for the group of relative cycles and
$H_k(F)$ for the homology of $(C_\bullet(F),\partial_F)$.
\end{definition}

\begin{lemma}\label{lem:relchains}
$\partial_F^2=0$, and $H_\bullet(F)$ computes the singular homology of the mapping
cone of $F$; in particular, when $F$ is the inclusion of a closed subspace,
$H_k(F)\cong H_k(N,M;\Z)$.
\end{lemma}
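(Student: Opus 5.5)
The plan is to first check $\partial_F^2=0$ by direct computation, then identify $C_\bullet(F)$ with the algebraic mapping cone of the chain map $F_*\colon C_\bullet(M)\to C_\bullet(N)$, and finally compare with the singular chains of the topological mapping cone $\mathrm{Cone}(F)=N\cup_F (M\times[0,1])/(M\times\{0\})$.

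For the first claim, take $(S,T)\in C_k(F)$. Then
\[
\partial_F^2(S,T)=\partial_F\bigl(\partial S-F_*T,\,-\partial T\bigr)
=\bigl(\partial^2S-\partial F_*T+F_*\partial T,\;\partial^2T\bigr)=0,
\]
because $F_*$ is a chain map, so $\partial F_*=F_*\partial$. Up to the sign convention on the second slot, $(C_\bullet(F),\partial_F)$ is the standard mapping cone $\operatorname{cone}(F_*)$ of \cite{Weibel}; rescaling the second component by $-1$ gives an isomorphism with Weibel's convention. The cone therefore carries a short exact sequence $0\to C_\bullet(N)\to C_\bullet(F)\to C_{\bullet-1}(M)\to0$. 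Its connecting map is $F_*$ on homology, which gives the long exact sequence
\[
\cdots\to H_k(M)\xrightarrow{F_*}H_k(N)\to H_k(F)\to H_{k-1}(M)\to\cdots.
\]

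To compare with the topological mapping cone $X=\mathrm{Cone}(F)$, I will factor $F$ as the inclusion $M\hookrightarrow \mathrm{Cyl}(F)$ followed by the deformation retraction $\mathrm{Cyl}(F)\simeq N$, so that $X=\mathrm{Cyl}(F)/M$. There are two sub-steps. First, $\operatorname{cone}(F_*)$ is quasi-isomorphic to $\operatorname{cone}(C_\bullet(M)\to C_\bullet(\mathrm{Cyl}F))$, because cones are functorial for commutative squares and the five lemma applied to the long exact sequences above shows that a quasi-isomorphism of arrows induces one on cones. Second, for an injective chain map $i$ there is a natural quasi-isomorphism $\operatorname{cone}(i)\to\operatorname{coker}(i)$, given by $(S,T)\mapsto[S]$, and this again follows from the five lemma. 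Hence $H_k(F)\cong H_k(\mathrm{Cyl}F,M)\cong \tilde H_k(X)$. The last isomorphism holds because $(\mathrm{Cyl}F,M)$ is a good pair, since $M$ has a collar neighbourhood $M\times[0,\epsilon)$ \cite{Hatcher}.

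When $F$ is the inclusion of a closed subspace, the cone-to-cokernel quasi-isomorphism applies directly to $C_\bullet(M)\hookrightarrow C_\bullet(N)$ and gives $H_k(F)\cong H_k(N,M;\Z)$, with no good-pair hypothesis needed. Smooth singular chains can be used throughout because they compute singular homology on manifolds. For the mapping cone $X$, which is not a manifold, I will state the identification via $H_\bullet(\mathrm{Cyl}F,M)$ using continuous chains, which are quasi-isomorphic to smooth ones on $\mathrm{Cyl}F$ and $M$. The main obstacle is this bookkeeping: making sure the cone-to-cokernel map is a chain map with the paper's sign $-\partial T$, and passing cleanly between smooth and continuous chains. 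Everything else follows from the five lemma.
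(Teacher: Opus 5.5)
Your proof is correct. The verification of $\partial_F^2=0$ and the identification of $C_\bullet(F)$ with the algebraic cone of $F_*$ (up to the sign of the second slot) are exactly what the paper does. The paper then handles the rest in one line: the homology of this cone is the reduced homology of the topological mapping cone ``by standard homological algebra'', and for closed inclusions one obtains $H_\bullet(N,M)$ ``by excision''. Your factorization of $F$ through the mapping cylinder, the five-lemma comparison of cones, the cone-to-cokernel quasi-isomorphism and the good pair $(\mathrm{Cyl}\,F,M)$ are a faithful unpacking of the first of these phrases.

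You genuinely diverge in the inclusion case. Instead of going through the topological cone and excision, you apply the cone-to-cokernel quasi-isomorphism directly to the injective chain map $C_\bullet(M)\hookrightarrow C_\bullet(N)$, whose cokernel is by definition the relative singular chain complex. That route is purely algebraic and uses no point-set input. It also makes transparent that closedness of $M$ plays no role in $H_k(F)\cong H_k(N,M;\Z)$. The paper's route instead presents the inclusion case as a literal specialization of the topological-cone identification.

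There is one small slip in your bookkeeping: $\mathrm{Cyl}\,F$ is not a manifold in general, so there are no smooth chains on it to compare with continuous ones. The clean order is to compare smooth and continuous chains on $M$ and $N$ only. Then transfer to the cone of the continuous $F_*$ by the same five-lemma argument, and work with continuous chains from there on.
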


\begin{proof}
$\partial_F^2(S,T)=(\partial(\partial S-F_*T)+F_*\partial T,\;\partial^2T)
=(-F_*\partial T+F_*\partial T,\,0)=0$, using $\partial F_*=F_*\partial$. The
complex \eqref{eq:relchains} is the algebraic mapping cone of
$F_*\colon C_\bullet(M)\to C_\bullet(N)$ (up to the sign of the second component,
which is an isomorphism of complexes), whose homology is the reduced homology of
the topological mapping cone by standard homological algebra; for closed
inclusions this is $H_\bullet(N,M)$ by excision.
\end{proof}

\begin{definition}\label{def:pairing}
The \emph{relative integration pairing} is
\begin{equation}\label{eq:pairing}
  \pair{\cdot}{\cdot}\colon\ \Om^k(F)\times C_k(F)\longrightarrow\R,
  \qquad
  \pair{(\alpha,\beta)}{(S,T)}:=\int_S\alpha-\int_T\beta .
\end{equation}
\end{definition}

\begin{proposition}[Relative Stokes theorem]\label{prop:stokes}
For all $(\alpha,\beta)\in\Om^{k-1}(F)$ and $(S,T)\in C_k(F)$,
\begin{equation}\label{eq:stokes}
  \pair{\dF(\alpha,\beta)}{(S,T)}
  \;=\;
  \pair{(\alpha,\beta)}{\partial_F(S,T)} .
\end{equation}
Consequently the pairing descends to
$H^k(\Om(F))\times H_k(F)\to\R$. The value of a closed relative form on a relative
cycle is called a \emph{relative period}; for $[\rvp]\in H^{n+1}(\Om(F))$ we
write
\begin{equation}\label{eq:periods}
  P_\rvp:=\bigl\{\pair{\rvp}{c}\ :\ c\in Z_{n+1}(F)\bigr\}\subseteq\R
\end{equation}
for the (sub)group of relative periods, and call $\rvp$ \emph{relatively
integral} if $P_\rvp\subseteq\Z$.
\end{proposition}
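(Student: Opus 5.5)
The plan is to compute both sides of \eqref{eq:stokes} directly, using the ordinary Stokes theorem for smooth singular chains on $N$ and on $M$ together with naturality of integration under pullback. The descent to cohomology and homology will then be a formal consequence.

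\emph{The identity.} Let $(\alpha,\beta)\in\Om^{k-1}(F)$, so $\alpha\in\Om^{k-1}(N)$ and $\beta\in\Om^{k-2}(M)$, and let $(S,T)\in C_k(F)$. By \eqref{eq:cone} and \eqref{eq:pairing}, the left-hand side is
\[
\pair{(d\alpha,\,F^*\alpha-d\beta)}{(S,T)}=\int_S d\alpha-\int_T F^*\alpha+\int_T d\beta .
\]
By \eqref{eq:relchains}, the right-hand side is
\[
\pair{(\alpha,\beta)}{(\partial S-F_*T,\,-\partial T)}=\int_{\partial S}\alpha-\int_{F_*T}\alpha+\int_{\partial T}\beta .
\]
Three facts match these term by term:
\begin{itemize}
\item $\int_S d\alpha=\int_{\partial S}\alpha$ by Stokes on $N$;
\item $\int_T d\beta=\int_{\partial T}\beta$ by Stokes on $M$;
\item $\int_T F^*\alpha=\int_{F_*T}\alpha$, which holds simplex by simplex since $(F\circ\sigma)^*\alpha=\sigma^*F^*\alpha$ for every smooth simplex $\sigma$ in $T$.
\end{itemize}
The only place needing care is the signs. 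Here the minus sign in the second component of $\partial_F$ is exactly what absorbs the minus sign of the pairing.

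\emph{Descent.} The pairing is bilinear. If $\rvp$ is $\dF$-closed and $c=\partial_F b$, then $\pair{\rvp}{c}=\pair{\dF\rvp}{b}=0$. If $\rvp=\dF\gamma$ and $c$ is a relative cycle, then $\pair{\rvp}{c}=\pair{\gamma}{\partial_F c}=0$. Together these show that the pairing factors through $H^k(\Om(F))\times H_k(F)$.

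\emph{The period group.} $P_\rvp$ is a subgroup of $\R$ because $Z_{n+1}(F)$ is a group and the pairing is additive in the chain variable. By the descent step, $P_\rvp$ depends only on $[\rvp]$ and equals the image of $H_{n+1}(F)$ under pairing with $[\rvp]$.

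I do not expect a genuine obstacle here. The one thing to be careful about is the sign bookkeeping in the identity, and keeping track of which manifold each form and each chain lives on.
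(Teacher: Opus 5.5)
Your proposal is correct and follows the paper's proof essentially verbatim: the same termwise matching via Stokes on $N$ and on $M$, together with $\int_T F^*\alpha=\int_{F_*T}\alpha$, followed by descent because closed forms annihilate boundaries and exact forms annihilate cycles. Your added remark that $P_\rvp$ is a subgroup depending only on $[\rvp]$ is a welcome addition that the paper leaves implicit.
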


\begin{proof}
Using the classical Stokes theorem on $N$ and on $M$ and
$\displaystyle \int_T F^*\alpha=\int_{F_*T}\alpha$,
\begin{align*}
\pair{\dF(\alpha,\beta)}{(S,T)}
&=\int_S d\alpha-\int_T\bigl(F^*\alpha-d\beta\bigr)
 =\int_{\partial S}\alpha-\int_{F_*T}\alpha+\int_{\partial T}\beta\\
&=\int_{\partial S-F_*T}\alpha-\int_{-\partial T}\beta
 =\pair{(\alpha,\beta)}{\partial_F(S,T)} .
\end{align*}
Descent to (co)homology is immediate: closed forms annihilate boundaries and exact
forms annihilate cycles by \eqref{eq:stokes}.
\end{proof}

\begin{remark}[Poincar\'e--Lefschetz]\label{rem:lefschetz}
For the inclusion $i\colon\partial W\hookrightarrow W$ of the boundary of a compact
oriented $(n{+}1)$-manifold, $H^{k}(\Om(i))\cong H^k(W,\partial W;\R)$
(de Rham-type theorem for the cone, cf.\ \cite{BottTu}), the pair
$([W],[\partial W])$ is a relative $(n{+}1)$-cycle by definition of the boundary
orientation, and the pairing \eqref{eq:pairing} against it implements evaluation on
the relative fundamental class; nondegeneracy of the resulting pairing between
$H^k(W,\partial W)$ and $H_k(W,\partial W)$ is Lefschetz duality. Thus the
integration theory of this section contains the classical relative pairings as the
special case of inclusions.
\end{remark}

\begin{remark}\label{rem:integralclass}
Since the mapping cone of a smooth map between manifolds has the homotopy type of a
CW complex with finitely generated homology in each degree (for $M,N$ of finite
type), the universal coefficient theorem identifies relative integrality of
$\rvp$ with the condition that $[\rvp]$ lie in the image of
$H^{n+1}(F;\Z)\to H^{n+1}(F;\R)\cong H^{n+1}(\Om(F))$. We use the two formulations
interchangeably for spaces of finite type.
\end{remark}
\subsection{Computing relative periods}\label{subsec:computing}

Relative integrality is the hypothesis of every existence statement below
(Theorems~\ref{thm:WZ} and~\ref{thm:prequantization}), yet Definition
\eqref{eq:periods} quantifies over all relative cycles and is not directly
checkable. We now reduce it to two pieces of data: the ordinary periods of
$\omega$ on $N$, and a single homomorphism defined on the kernel of $F_*$.

Throughout, $\rvp=(\omega,\eta)\in\Om^{n+1}(F)$ is $\dF$-closed, so that
\begin{equation}\label{eq:closedagain}
  d\omega=0,\qquad d\eta=F^*\omega .
\end{equation}
Write
\[
  P_\omega:=\Bigl\{\int_S\omega\ :\ S\in Z_{n+1}(N;\Z)\Bigr\}\subseteq\R
\]
for the group of \emph{absolute} periods of $\omega$ on the target.

\begin{lemma}\label{lem:cycle-shape}
If $(S,T)\in Z_{n+1}(F)$ then $T\in Z_n(M;\Z)$ and $F_*[T]=0$ in
$H_n(N;\Z)$. Conversely, every $T\in Z_n(M;\Z)$ with $F_*[T]=0$ occurs as the
source component of a relative $(n{+}1)$-cycle, and the possible first components
form a coset of $Z_{n+1}(N;\Z)$.
\end{lemma}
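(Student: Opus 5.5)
The proof will read everything off Definition~\ref{def:relchains}, in three short steps.

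\emph{Forward direction.} Let $(S,T)\in Z_{n+1}(F)$. By the definition of relative cycle, $\partial S=F_*T$ and $\partial T=0$. The second equation says $T\in Z_n(M;\Z)$. The first exhibits $F_*T$ as the boundary of the integral chain $S\in C_{n+1}(N)$. Here we use that $F_*$ commutes with $\partial$, so $F_*T$ is a cycle, and its class $F_*[T]=[F_*T]$ therefore vanishes in $H_n(N;\Z)$.

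\emph{Converse.} Let $T\in Z_n(M;\Z)$ with $F_*[T]=0$. The chain $F_*T$ is then an $n$-cycle on $N$ whose homology class is zero. By the definition of singular homology there is some $S_0\in C_{n+1}(N;\Z)$ with $\partial S_0=F_*T$. The pair $(S_0,T)$ then satisfies both cycle equations, so it lies in $Z_{n+1}(F)$.

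\emph{Coset statement.} Fix such a $T$ and the solution $S_0$. A chain $S$ is an admissible first component for $T$ exactly when $\partial S=F_*T=\partial S_0$, that is, when $\partial(S-S_0)=0$. So the set of admissible $S$ is $S_0+Z_{n+1}(N;\Z)$, a coset of $Z_{n+1}(N;\Z)$ in $C_{n+1}(N;\Z)$. It is nonempty by the converse step, so it is genuinely a coset rather than the empty set.

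I expect no real obstacle. The only points needing care are that integer coefficients are used throughout, so that $S_0$ is an integral chain, and that smooth singular homology agrees with singular homology, so the vanishing of $F_*[T]$ can be witnessed by a smooth chain. Both hold under the conventions fixed at the start of the section.
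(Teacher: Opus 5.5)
Your proposal is correct and follows the paper's own proof essentially verbatim: you read $\partial T=0$ and $\partial S=F_*T$ off the definition of a relative cycle, produce a filling to get the converse, and observe that $\partial S=\partial S'$ forces $S-S'\in Z_{n+1}(N;\Z)$. Your added remark that the comparison between smooth and continuous singular homology lets the filling be chosen smooth is a point the paper leaves implicit, and it is handled correctly.
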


\begin{proof}
By Definition~\ref{def:relchains}, $\partial T=0$ and $\partial S=F_*T$, so $F_*T$
bounds. Conversely if $F_*T=\partial S$ for some $S\in C_{n+1}(N)$ then $(S,T)$ is a
relative cycle, and $S$ is determined up to a cycle, since $\partial S=\partial S'$
forces $S-S'\in Z_{n+1}(N)$.
\end{proof}

\begin{theorem}[Period criterion]\label{thm:period-criterion}
Let $\rvp=(\omega,\eta)$ be $\dF$-closed and set
$K:=\ker\bigl(F_*\colon H_n(M;\Z)\to H_n(N;\Z)\bigr)$. Then the assignment
\begin{equation}\label{eq:defect}
  \Theta_\rvp\colon K\longrightarrow \R/P_\omega,
  \qquad
  \Theta_\rvp[T]:=\Bigl(\int_S\omega-\int_T\eta\Bigr)\bmod P_\omega
  \quad(\partial S=F_*T),
\end{equation}
is a well-defined group homomorphism, which we call the \emph{relative defect
homomorphism}. Moreover:
\begin{enumerate}[label=\textup{(\roman*)},leftmargin=2.2em]
\item $P_\omega\subseteq P_\rvp$, and $P_\rvp$ is the full preimage of
$\operatorname{im}\Theta_\rvp$ under the projection $\R\to\R/P_\omega$;
\item $\rvp$ is relatively integral if and only if
\[
  P_\omega\subseteq\Z
  \qquad\text{and}\qquad
  \operatorname{im}\Theta_\rvp\subseteq\Z/P_\omega ;
\]
\item if $F_*\colon H_n(M;\Z)\to H_n(N;\Z)$ is injective, then
$P_\rvp=P_\omega$, and relative integrality is equivalent to ordinary
integrality of $\omega$ on $N$: the source contributes nothing;
\item if $H_{n+1}(N;\Z)=0$, then $P_\omega=0$, the defect homomorphism takes
values in $\R$, and $\rvp$ is relatively integral if and only if
$\Theta_\rvp(K)\subseteq\Z$.
\end{enumerate}
\end{theorem}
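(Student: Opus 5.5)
The plan is to establish well-definedness of $\Theta_\rvp$ first, since (i)--(iv) then follow by bookkeeping from Lemma~\ref{lem:cycle-shape}.

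\emph{Well-definedness.} Given $[T]\in K$, represent it by a cycle $T\in Z_n(M;\Z)$. Since $F_*[T]=0$, Lemma~\ref{lem:cycle-shape} gives some $S$ with $\partial S=F_*T$, and any other choice $S'$ differs from $S$ by a cycle. Hence $\int_{S'}\omega-\int_S\omega\in P_\omega$, so the choice of $S$ does not matter. Now replace $T$ by a homologous representative $T'=T+\partial U$ with $U\in C_{n+1}(M)$. Then $S':=S+F_*U$ satisfies $\partial S'=F_*T'$. Using $\int_{F_*U}\omega=\int_U F^*\omega$ and Stokes on $M$ together with $d\eta=F^*\omega$ from \eqref{eq:closedagain}, we get
\[
\int_{S'}\omega-\int_{T'}\eta=\int_S\omega-\int_T\eta+\int_U d\eta-\int_{\partial U}\eta,
\]
and the last two terms cancel. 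Equivalently, $(S',T')-(S,T)=(F_*U,\partial U)=-\partial_F(0,U)$, so one can cite Proposition~\ref{prop:stokes} directly. For additivity, if $\partial S_i=F_*T_i$ for $i=1,2$, then $\partial(S_1+S_2)=F_*(T_1+T_2)$. The expression inside the bracket of \eqref{eq:defect} is linear in the pair $(S,T)$, so $\Theta_\rvp$ is a homomorphism.

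\emph{Part (i).} Note that $\Theta_\rvp[T]$ is exactly the class of $\pair{\rvp}{(S,T)}$ for any relative cycle $(S,T)$ with source component $T$. For the inclusion $P_\omega\subseteq P_\rvp$, observe that every $S\in Z_{n+1}(N)$ gives a relative cycle $(S,0)$ with pairing $\int_S\omega$. By Lemma~\ref{lem:cycle-shape}, every relative cycle has $[T]\in K$, so the projection of $P_\rvp$ lies in $\operatorname{im}\Theta_\rvp$. Conversely, suppose $r\in\R$ projects to $\Theta_\rvp[T]$. Then $r=\pair{\rvp}{(S,T)}+p$ for some $p=\int_{S_0}\omega\in P_\omega$, and hence $r=\pair{\rvp}{(S+S_0,T)}\in P_\rvp$. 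So $P_\rvp$ is the full preimage of $\operatorname{im}\Theta_\rvp$.

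\emph{Parts (ii)--(iv).} Part (ii) is immediate from (i). If $P_\rvp\subseteq\Z$, then $P_\omega\subseteq\Z$, and the image of $P_\rvp$ gives $\operatorname{im}\Theta_\rvp\subseteq\Z/P_\omega$. Conversely, under both conditions the preimage of a subset of $\Z/P_\omega$ with $P_\omega\subseteq\Z$ lies in $\Z$. For (iii), injectivity of $F_*$ forces $K=0$, so $P_\rvp$ is the preimage of $0$, namely $P_\omega$. For (iv), $H_{n+1}(N;\Z)=0$ means every cycle in $Z_{n+1}(N;\Z)$ is a boundary, so $\int_S\omega=0$ by Stokes and closedness of $\omega$. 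Thus $P_\omega=0$, and (ii) reduces to $\Theta_\rvp(K)\subseteq\Z$.

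The main obstacle is the independence of the representative $T$ in the well-definedness step. It is the only place where the cone closedness $d\eta=F^*\omega$ is used, and the signs in $\partial_F$ must be tracked carefully.
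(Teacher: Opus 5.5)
Your proposal is correct and follows the paper's proof essentially step for step. Well-definedness uses the same filling change $S'=S+F_*U$ together with Stokes on $M$ and $d\eta=F^*\omega$, and (i)--(iv) use the same coset bookkeeping. Two details differ only in presentation: your observation that $(S',T')-(S,T)=-\partial_F(0,U)$ repackages the same invariance computation as an instance of Proposition~\ref{prop:stokes}, and your explicit reverse inclusion in (i), via $r=\pair{\rvp}{(S+S_0,T)}$, spells out what the paper compresses into the fillings sweeping out the whole coset.
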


\begin{proof}
\emph{Well-definedness.} Fix $T\in Z_n(M;\Z)$ with $F_*[T]=0$. Changing the filling
$S$ to $S'$ changes $\int_S\omega$ by a period of $\omega$ (Lemma
\ref{lem:cycle-shape}), hence does not change the class modulo $P_\omega$. Changing
$T$ within its homology class, say $T'=T+\partial U$ with $U\in C_{n+1}(M)$, we may
take $S'=S+F_*U$, since $\partial(S+F_*U)=F_*T+F_*\partial U=F_*T'$. Then, using
\eqref{eq:closedagain} and Stokes on $M$,
\[
  \int_{S'}\omega-\int_{T'}\eta
  =\int_S\omega+\int_U F^*\omega-\int_T\eta-\int_{\partial U}\eta
  =\int_S\omega+\int_U d\eta-\int_T\eta-\int_U d\eta,
\]
which is $\int_S\omega-\int_T\eta$. So $\Theta_\rvp$ depends only on $[T]$, and it
is additive because fillings may be chosen additively.

\emph{(i).} Taking $T=0$ shows $(S,0)$ is a relative cycle for every
$S\in Z_{n+1}(N)$, with period $\int_S\omega$; hence $P_\omega\subseteq P_\rvp$.
Conversely every relative cycle has the shape of Lemma~\ref{lem:cycle-shape}, and
its period $\int_S\omega-\int_T\eta$ is by definition a representative of
$\Theta_\rvp[T]$; as $S$ ranges over all fillings, these representatives sweep out
the whole coset. So $P_\rvp$ is exactly the union of the cosets in
$\operatorname{im}\Theta_\rvp$, which is the asserted preimage.

\emph{(ii).} By (i), $P_\rvp\subseteq\Z$ forces $P_\omega\subseteq\Z$, and then
$P_\rvp\subseteq\Z$ is equivalent to every coset of $\operatorname{im}\Theta_\rvp$
lying inside $\Z$, i.e.\ to $\operatorname{im}\Theta_\rvp\subseteq\Z/P_\omega$. The
converse is immediate from the same description.

\emph{(iii).} If $F_*$ is injective on $H_n$ then $K=0$, so
$\operatorname{im}\Theta_\rvp=0$ and $P_\rvp=P_\omega$ by (i).

\emph{(iv).} If $H_{n+1}(N;\Z)=0$ then every $(n{+}1)$-cycle in $N$ bounds, so
$P_\omega=0$ by Stokes and \eqref{eq:closedagain}, and $\R/P_\omega=\R$.
\end{proof}

\begin{remark}[Reading the criterion]\label{rem:reading-criterion}
Theorem~\ref{thm:period-criterion} separates the two obstructions that the
three-level statement of Theorem~\ref{thm:WZ} keeps apart by hand: integrality of
$P_\omega$ is the \emph{target} condition, governing the ambiguity in changing a
bulk filling, while $\Theta_\rvp$ is the genuinely \emph{relative} correction,
supported on the source classes that die in the target. Part (iii) is the reason
the theory has no content when $F$ is homologically injective in the relevant
degree, and part (iv) is the reason it has maximal content when the target is
homologically trivial in that degree, as for a disc or a contractible bulk.
\end{remark}

\begin{example}[Flux through a disc against a boundary Wilson line]
\label{ex:disc}
Let $N=D^2$ be the closed disc, $M=S^1=\partial D^2$, and $F$ the inclusion; take
$n=1$. A closed relative $2$-form is a pair $(\omega,\eta)$ with $\omega$ a $2$-form
on $D^2$ and $\eta\in\Om^1(S^1)$ satisfying $d\eta=F^*\omega=0$, the latter
automatically since a $2$-form restricts to zero on a $1$-manifold. Here
$H_2(D^2;\Z)=0$, so $P_\omega=0$, and $K=\ker\bigl(H_1(S^1)\to H_1(D^2)\bigr)=\Z$,
generated by the boundary circle, whose filling is $D^2$ itself. Thus
\[
  \Theta_\rvp\bigl[\,S^1\,\bigr]
  \;=\;
  \int_{D^2}\omega-\int_{S^1}\eta ,
\]
and by Theorem~\ref{thm:period-criterion}(iv) the pair is relatively integral
precisely when this number is an integer. Physically: the magnetic flux through
the disc minus the holonomy of the boundary connection must be quantized. The
absolute theory sees neither condition, since $\omega$ has no periods on a disc
and $\eta$ is a closed $1$-form on a circle with arbitrary period; only the
relative combination is constrained.
\end{example}

\begin{example}[A homologically injective inclusion]\label{ex:torus}
Let $N=T^2=\R^2/\Z^2$ with $\omega=\theta\,dx\wedge dy$ for $\theta\in\R$, let
$M=S^1$ be the circle $\{y=0\}$ with $F$ the inclusion, and $\eta=0$; then
$d\eta=0=F^*\omega$. The map $F_*\colon H_1(S^1;\Z)\to H_1(T^2;\Z)$ sends the
generator to a primitive class, hence is injective, so
Theorem~\ref{thm:period-criterion}(iii) gives $P_\rvp=P_\omega=\theta\,\Z$, and
relative integrality holds exactly when $\theta\in\Z$. Comparing with
Example~\ref{ex:disc} shows how sharply the answer depends on the homological
position of the source, not on the size of either manifold.
\end{example}

\subsection{The level group}\label{subsec:levelgroup}

Theorem~\ref{thm:period-criterion} makes the relative period group computable.
We now record what that computation yields, and it yields more than a decision
procedure: it shows that the set of levels at which a relative form can be
prequantized is always a subgroup of $\Z$, and identifies its generator.

In the applications one rarely prequantizes $\rvp$ itself but rather a
multiple $k\rvp$, the integer $k$ being the \emph{level}. Since both
components of $\rvp$ scale linearly, so does every relative period, and the
following definition is natural.

\begin{definition}\label{def:level-group}
The \emph{level group} of a $\dF$-closed relative form $\rvp$ is
\[
  L(\rvp) \;:=\; \bigl\{\, k\in\Z \;:\; k\rvp \text{ is relatively integral}\,\bigr\}
  \;=\; \bigl\{\, k\in\Z \;:\; k\,P_\rvp\subseteq\Z \,\bigr\}.
\]
\end{definition}

\begin{proposition}[Cyclicity of the level group]\label{prop:level-cyclic}
Let $\rvp\in\Om^{n+1}(F)$ be $\dF$-closed. Then $L(\rvp)$ is a subgroup of $\Z$;
consequently there is a unique integer $k_0=k_0(\rvp)\geq0$ with
\[
  L(\rvp)\;=\;k_0\,\Z ,
\]
called the \emph{prequantization level} of $\rvp$. No hypothesis on $M$, $N$ or
$F$ is required.
\end{proposition}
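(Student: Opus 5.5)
The plan is to verify the subgroup axioms for $L(\rvp)$ directly from the second description in Definition~\ref{def:level-group}, and then invoke the classification of subgroups of $\Z$. First, the description itself needs a word of justification. For every relative cycle $c$ we have $\pair{k\rvp}{c}=k\pair{\rvp}{c}$, because the pairing \eqref{eq:pairing} is bilinear and $k\rvp=(k\omega,k\eta)$ is again $\dF$-closed. Hence $P_{k\rvp}=k\,P_\rvp$, and $k\rvp$ is relatively integral exactly when $kP_\rvp\subseteq\Z$.

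Next come the closure properties, using only that $\Z$ is an additive group. Since $0\cdot P_\rvp=\{0\}\subseteq\Z$, we get $0\in L(\rvp)$. If $k,l\in L(\rvp)$ and $p\in P_\rvp$, then $(k-l)p=kp-lp$ is a difference of two integers, so $k-l\in L(\rvp)$. No property of $P_\rvp$ is used beyond its being a subset of $\R$; in particular we never need it to be finitely generated, and no hypothesis on $M$, $N$ or $F$ enters. This accounts for the final sentence of the statement.

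Finally, every subgroup of $\Z$ has the form $k_0\Z$ for a unique $k_0\ge0$. If the subgroup is $\{0\}$, then $k_0=0$. Otherwise $k_0$ is the least positive element, and division with remainder shows that every element is a multiple of $k_0$. Uniqueness holds because $k_0\Z=k_0'\Z$ with $k_0,k_0'\ge0$ forces $k_0=k_0'$.

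I do not expect a genuine obstacle here. The only point needing care is the identity $P_{k\rvp}=kP_\rvp$, which is immediate from linearity of integration in the form. It may also be worth remarking that $k_0=0$ occurs exactly when some period is irrational. By Theorem~\ref{thm:period-criterion}, that period is either an absolute period in $P_\omega$ or a representative of a value of $\Theta_\rvp$.
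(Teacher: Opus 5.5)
Your proof is correct and is essentially the paper's argument: $P_{k\rvp}=kP_\rvp$ by linearity of the pairing, then closure of $L(\rvp)$ under $0$ and under differences, then the classification of subgroups of $\Z$. One caution applies only to your closing aside. An irrational period does force $k_0=0$, but the converse needs $P_\rvp$ to be finitely generated, which is the setting of Theorem~\ref{thm:level-group}(ii). In the hypothesis-free setting the converse fails: take $M=\varnothing$ and $N$ noncompact with $H_{n+1}(N;\Z)$ free of infinite rank. Then $P_\rvp$ can be the subgroup of $\Q$ generated by all $1/p$ with $p$ prime, so every period is rational and yet $k_0=0$.
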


\begin{proof}
For a relative cycle $(S,T)$ one has $\pair{k\rvp}{(S,T)}=k\pair{\rvp}{(S,T)}$,
so $P_{k\rvp}=k\,P_\rvp$ and $k\in L(\rvp)$ if and only if
$kP_\rvp\subseteq\Z$. Now $0\in L(\rvp)$, and if $k,k'\in L(\rvp)$ then for every
$p\in P_\rvp$ we have $(k-k')p=kp-k'p\in\Z$, so $k-k'\in L(\rvp)$. Thus
$L(\rvp)$ is a subgroup of $\Z$, hence of the form $k_0\Z$ for a unique
$k_0\geq0$.
\end{proof}

\begin{convention}\label{conv:k0-zero}
The value $k_0=0$ encodes $L(\rvp)=\{0\}$, that is, the case in which
\emph{no nonzero level} is relatively integral. It should not be read as saying
that ``level zero is admissible''; the zero form is of course integral, but the
statement carries no content. Whenever $k_0\geq1$ the admissible levels are
exactly the nonzero multiples of $k_0$, and $k_0$ itself is the smallest
positive admissible level.
\end{convention}

Proposition~\ref{prop:level-cyclic} is qualitative: it says the answer has the
shape $k_0\Z$ but not what $k_0$ is. Making $k_0$ computable is where the period
criterion enters, and where finiteness hypotheses become necessary.

\begin{theorem}[Computation of the prequantization level]\label{thm:level-group}
Assume in addition that $H_{n+1}(N;\Z)$ and
$K=\ker\bigl(F_*\colon H_n(M;\Z)\to H_n(N;\Z)\bigr)$ are finitely generated
\textup{(}for instance, $M$ and $N$ of finite type, in particular compact\textup{)}.
Choose classes $S_1,\dots,S_a\in H_{n+1}(N;\Z)$ whose images generate the free
quotient $H_{n+1}(N;\Z)/\mathrm{Tor}$, and classes $T_1,\dots,T_b$ generating
$K$ \emph{in full, torsion included} \textup{(}see
Remark~\ref{rem:torsion-asymmetry}\textup{)}. Set
\[
  r_i:=\int_{S_i}\omega \quad (1\leq i\leq a),
  \qquad
  r_{a+j}:=\int_{S}\omega-\int_{T_j}\eta \quad (1\leq j\leq b),
\]
the second computed from any singular chain $S$ with $\partial S=F_*T_j$. Then:
\begin{enumerate}[label=\textup{(\roman*)},leftmargin=2.2em]
\item $P_\rvp=\langle r_1,\dots,r_{a+b}\rangle$ is a finitely generated subgroup
of $\R$; in particular the choice of the chains $S$ affects the $r_{a+j}$ only
by elements of $\langle r_1,\dots,r_a\rangle$, hence not the group they
generate;
\item $k_0=0$ if and only if some $r_i$ is irrational; in that case no nonzero
level admits a relative prequantization;
\item if every $r_i$ is rational, write the nonzero ones as $p_i/q_i$ in lowest
terms with $q_i\geq1$ and discard the $r_i$ that vanish. Then
\[
  k_0=\operatorname{lcm}\{q_i : r_i\neq0\},
\]
with the convention $k_0=1$ if every $r_i=0$. In particular $k_0$ is determined
by at most $a+b$ integrals followed by one arithmetic computation.
\end{enumerate}
\end{theorem}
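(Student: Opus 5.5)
The plan is to deduce everything from Theorem~\ref{thm:period-criterion}(i) and Proposition~\ref{prop:level-cyclic}. The proof has three steps: first generators for $P_\rvp$, then the irrational case, then the rational case.

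\emph{Step 1: generators of $P_\rvp$.} Integration of the closed form $\omega$ over $(n{+}1)$-cycles factors through $H_{n+1}(N;\Z)\to\R$. This map is a homomorphism into a torsion-free group, so it kills $\mathrm{Tor}$. Hence $P_\omega=\langle r_1,\dots,r_a\rangle$, since the $S_i$ generate the free quotient.

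By Theorem~\ref{thm:period-criterion}(i), $P_\rvp$ is the preimage of $\operatorname{im}\Theta_\rvp$ in $\R$. Because $\Theta_\rvp$ is a homomorphism and the $T_j$ generate $K$, its image is generated by the classes $\Theta_\rvp[T_j]=r_{a+j}\bmod P_\omega$. The preimage of the subgroup generated by these classes is $P_\omega+\langle r_{a+1},\dots,r_{a+b}\rangle$, which equals $\langle r_1,\dots,r_{a+b}\rangle$.

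The independence of the choice of $S$ is exactly the well-definedness part of Theorem~\ref{thm:period-criterion}: changing $S$ alters $r_{a+j}$ by an element of $P_\omega=\langle r_1,\dots,r_a\rangle$.

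Torsion must be included on the $K$ side because $\Theta_\rvp$ lands in $\R/P_\omega$, which may have torsion. So a torsion class of $K$ can have a nonzero defect, for example a rational representative. Omitting it would lose generators. This is the subtle point of the argument and I will flag it.

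\emph{Step 2: the irrational case.} Since $P_\rvp$ is generated by the $r_i$, a level $k$ lies in $L(\rvp)$ if and only if $kr_i\in\Z$ for all $i$. If some $r_i$ is irrational, then $kr_i\notin\Z$ for every $k\ne0$, so $L(\rvp)=\{0\}$ and $k_0=0$. By Convention~\ref{conv:k0-zero} this means no nonzero level admits a relative prequantization (relative integrality being the criterion used for that). Conversely, if all $r_i$ are rational, Step 3 produces a positive element of $L(\rvp)$, so $k_0\neq0$.

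\emph{Step 3: the rational case.} Suppose every $r_i$ is rational, and write each nonzero one as $r_i=p_i/q_i$ in lowest terms. Then $k p_i/q_i\in\Z$ exactly when $q_i\mid k$, because $\gcd(p_i,q_i)=1$. The vanishing $r_i$ impose no condition.

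Therefore $L(\rvp)=\bigcap_i q_i\Z=\operatorname{lcm}\{q_i\}\,\Z$, and if every $r_i=0$ then $L(\rvp)=\Z$, so $k_0=1$. By the uniqueness in Proposition~\ref{prop:level-cyclic}, $k_0=\operatorname{lcm}\{q_i\}$.
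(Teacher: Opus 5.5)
Your proposal is correct and follows the paper's proof essentially step for step. You discard target torsion to get $P_\omega=\langle r_1,\dots,r_a\rangle$, and you use Theorem~\ref{thm:period-criterion}(i) to identify $P_\rvp$ as the preimage of the subgroup of $\R/P_\omega$ generated by the classes $\Theta_\rvp[T_j]$. You then read off $L(\rvp)$ from the condition $kr_i\in\Z$ for every generator, which also settles the irrational and rational cases exactly as the paper does, and your comment on keeping the torsion of $K$ matches Remark~\ref{rem:torsion-asymmetry}.
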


\begin{proof}
\emph{Torsion in the target.} The map $[S]\mapsto\int_S\omega$ is a homomorphism
$H_{n+1}(N;\Z)\to\R$; since $\R$ is torsion-free it kills
$\mathrm{Tor}\,H_{n+1}(N;\Z)$ and factors through the free quotient. Hence
$P_\omega=\langle r_1,\dots,r_a\rangle$, and torsion classes in $H_{n+1}(N;\Z)$
may indeed be discarded.

(i) By Theorem~\ref{thm:period-criterion}(i), $P_\rvp$ is the full preimage of
$\operatorname{im}\Theta_\rvp$ under $\R\to\R/P_\omega$; equivalently, $P_\rvp$
is generated by $P_\omega$ together with one real lift of $\Theta_\rvp(T)$ for
each $T$ in a generating set of $K$. By the previous paragraph
$P_\omega=\langle r_1,\dots,r_a\rangle$, and by the previous paragraph again the
classes $T_1,\dots,T_b$ suffice. Changing the filling $S$ of $F_*T_j$ changes
$r_{a+j}$ by a period of $\omega$, that is, by an element of
$\langle r_1,\dots,r_a\rangle$, which is already in the group.

(ii) If some $r_i$ is irrational then $kr_i\notin\Z$ for every $k\neq0$, so
$L(\rvp)=\{0\}$ and $k_0=0$ by Proposition~\ref{prop:level-cyclic}. Conversely
if all $r_i$ are rational then any common denominator lies in $L(\rvp)$, which
is therefore nonzero.

(iii) With all $r_i$ rational, $kP_\rvp\subseteq\Z$ holds if and only if
$kr_i\in\Z$ for every $i$. A vanishing $r_i$ imposes no condition, whence the
restriction to $r_i\neq0$. For $r_i=p_i/q_i$ with $\gcd(p_i,q_i)=1$ the
condition $kp_i/q_i\in\Z$ is equivalent to $q_i\mid k$. Hence
$L(\rvp)=\{k : q_i\mid k \text{ for all } i \text{ with } r_i\neq0\}
=\operatorname{lcm}\{q_i\}\,\Z$. If every $r_i=0$ then $P_\rvp=0$, every $k$ is
admissible, $L(\rvp)=\Z$ and $k_0=1$.
\end{proof}

\begin{remark}[The two torsions behave differently]\label{rem:torsion-asymmetry}
It is tempting to symmetrize Theorem~\ref{thm:level-group} by passing to the
free quotient on both sides. That would be wrong, and the asymmetry is worth
stating explicitly.

Torsion in $H_{n+1}(N;\Z)$ may be discarded, because $[S]\mapsto\int_S\omega$
takes values in the torsion-free group $\R$.

Torsion in $K$ may \emph{not} be discarded, because $\Theta_\rvp$ takes values in
$\R/P_\omega$, which has torsion as soon as $P_\omega\neq0$. Concretely, if
$T\in K$ has order $m$ then $m\Theta_\rvp[T]=\Theta_\rvp[mT]=0$, so
$\Theta_\rvp[T]$ is an $m$-torsion element of $\R/P_\omega$ and its real lifts
lie in $\tfrac1m P_\omega$ --- a group strictly larger than $P_\omega$ whenever
$\Theta_\rvp[T]\neq0$. Such a class therefore enlarges $P_\rvp$ and multiplies
$k_0$ by a divisor of $m$. For instance, with $P_\omega=\Z$ and
$\Theta_\rvp[T]=\tfrac12+\Z$ for a $2$-torsion class $T$, the free-quotient
computation would return $k_0=1$ whereas in fact $\tfrac12\in P_\rvp$ and
$k_0=2$.

The asymmetry has a clean source: the target contributes through a homomorphism
to $\R$, the source through a homomorphism to a quotient of $\R$, and only the
former is torsion-free. It is also invisible at both edges, since $K=0$ at the
target edge and $P_\omega=0$ at the source edge
\textup{(}Corollary~\ref{cor:edge-periods}\textup{)}, so it can only be detected
in the genuinely relative range.
\end{remark}

\begin{remark}[What the theorem buys]\label{rem:level-group-value}
Three things. First, it explains a phenomenon visible throughout the
quasi-Hamiltonian literature: whenever one asks at which levels a given space
admits a prequantization, the answer is always ``the integer multiples of some
$k_0$'' rather than an arbitrary set of integers, and
Proposition~\ref{prop:level-cyclic} shows this is forced, being nothing but the
statement that a subgroup of $\Z$ is cyclic. It is worth stressing that this
half requires no hypothesis whatever on $M$, $N$ or $F$; only the
\emph{computation} of $k_0$ needs finite generation. In particular the tables of
admissible levels computed case by case for non-simply-connected groups in
\cite{Krepski} are, structurally, computations of a single integer $k_0$.
Second, it reduces that computation to $a+b$ integrals, where $a$ and $b$ are
the numbers of generators of $H_{n+1}(N;\Z)$ and of $K$; the second group is
often much smaller than $H_n(M;\Z)$, and is zero whenever $F_*$ is injective, by
Theorem~\ref{thm:period-criterion}(iii). Third, it isolates the qualitative
dichotomy: either some period is irrational, and no level whatsoever works, or
all are rational and the admissible levels form a full cyclic group. There is no
intermediate behaviour.
\end{remark}

\begin{corollary}[Monotonicity under morphisms]\label{cor:level-monotone}
Let $\Phi\colon F\to F'$ be a morphism of arrows and $\Psi$ a
$d_{F'}$-closed relative form. Then
\[
  L(\Psi)\;\subseteq\;L(\Phi^*\Psi),
  \qquad\text{equivalently}\qquad
  k_0(\Phi^*\Psi)\ \big|\ k_0(\Psi).
\]
Pullback can only lower the prequantization level, never raise it.
\end{corollary}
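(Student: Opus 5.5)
The plan is to reduce the statement to the inclusion of period groups $P_{\Phi^*\Psi}\subseteq P_\Psi$. The level condition is monotone in the period group: if $k P_\Psi\subseteq\Z$ and $P_{\Phi^*\Psi}\subseteq P_\Psi$, then $kP_{\Phi^*\Psi}\subseteq\Z$. So $L(\Psi)\subseteq L(\Phi^*\Psi)$ follows directly from Definition~\ref{def:level-group}.

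The divisibility form then comes from Proposition~\ref{prop:level-cyclic}. Both level groups are cyclic, $k_0(\Psi)\Z\subseteq k_0(\Phi^*\Psi)\Z$, which is exactly $k_0(\Phi^*\Psi)\mid k_0(\Psi)$. This includes the degenerate case $k_0(\Psi)=0$ of Convention~\ref{conv:k0-zero}, since every integer divides $0$.

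For the inclusion of periods I make the morphism of arrows explicit. It is a commutative square: $\phi_M\colon M\to M'$ and $\phi_N\colon N\to N'$ with $F'\circ\phi_M=\phi_N\circ F$. On forms, $\Phi^*(\alpha,\beta)=(\phi_N^*\alpha,\phi_M^*\beta)$. On chains, $\Phi_*(S,T)=(\phi_{N*}S,\phi_{M*}T)$.

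Three routine checks are needed:
\begin{itemize}
\item $\Phi^*$ commutes with the cone differentials. The second component is $\phi_M^*F'^*\alpha-\phi_M^*d\beta=F^*\phi_N^*\alpha-d\phi_M^*\beta$, by the commuting square, so $\Phi^*\Psi$ is $\dF$-closed.
\item $\Phi_*$ is a chain map. We have $\partial_{F'}\Phi_*(S,T)=(\partial\phi_{N*}S-F'_*\phi_{M*}T,-\partial\phi_{M*}T)=\Phi_*\partial_F(S,T)$, again by commutativity, so relative cycles go to relative cycles.
\item The pairing is adjoint: $\pair{\Phi^*\Psi}{c}=\pair{\Psi}{\Phi_*c}$. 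This is just the change of variables $\int_S\phi^*\alpha=\int_{\phi_*S}\alpha$ applied componentwise.
\end{itemize}
If the functoriality theorem (Theorem~\ref{thm:functoriality}) is available at this point, these three facts are its content and I would simply cite it.

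With these in hand, every period of $\Phi^*\Psi$ has the form $\pair{\Phi^*\Psi}{c}$ with $c\in Z_{n+1}(F)$. It therefore equals $\pair{\Psi}{\Phi_*c}$ with $\Phi_*c\in Z_{n+1}(F')$, and so lies in $P_\Psi$. Hence $P_{\Phi^*\Psi}\subseteq P_\Psi$, and the result follows.

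I expect no real obstacle. The only step requiring care is the sign convention in the second component of $\partial_F$ and of the pairing. Since $\Phi$ acts componentwise without introducing signs, both conventions are preserved automatically.
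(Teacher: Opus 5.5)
Your proposal is correct and follows the paper's proof. The paper likewise gets $P_{\Phi^*\Psi}\subseteq P_\Psi$ from Theorem~\ref{thm:functoriality}(iv), deduces $L(\Psi)\subseteq L(\Phi^*\Psi)$ directly from the definition, and reads off divisibility from cyclicity; your three re-verified properties (cochain map, chain map, adjointness), together with the closedness of $\Phi^*\Psi$ that the paper leaves implicit, are exactly the content of that theorem.
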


\begin{proof}
By Theorem~\ref{thm:functoriality}(iv), $P_{\Phi^*\Psi}\subseteq P_\Psi$, so
$kP_\Psi\subseteq\Z$ implies $kP_{\Phi^*\Psi}\subseteq\Z$. Hence
$L(\Psi)\subseteq L(\Phi^*\Psi)$, and $k_0\Z\subseteq k_0'\Z$ means
$k_0'\mid k_0$.
\end{proof}

\begin{example}[The disc, revisited]\label{ex:disc-level}
For the disc of Example~\ref{ex:disc}, $H_2(D^2;\Z)=0$ gives $a=0$, and
$K=H_1(S^1;\Z)=\Z$ is generated by the boundary circle, so $b=1$ and
\[
  P_\rvp=\Bigl\langle \int_{D^2}\omega-\int_{S^1}\eta \Bigr\rangle .
\]
Writing $c$ for that single number, the level group is $L(\rvp)=k_0\Z$ with
$k_0$ the denominator of $c$ when $c\in\Q$, and $k_0=0$ otherwise. So a
bulk--boundary system on the disc is prequantizable at some level if and only if
its total flux-minus-holonomy is rational, and then exactly at the multiples of
its denominator. We stress that neither the flux $\int_{D^2}\omega$ nor the
holonomy $\int_{S^1}\eta$ is separately quantized, and neither is separately a
relative period: only the difference is constrained, which is exactly the
content of the relative formulation.
\end{example}

\begin{remark}[Comparison with the absolute theory]\label{rem:level-absolute}
In the absolute case $M=\varnothing$ one has $K=0$, so $b=0$ and $k_0$ is the
lcm of the denominators of the ordinary periods of $\omega$ --- the classical
statement that a closed form is prequantizable at level $k$ exactly when
$k[\omega]$ is integral. The relative theory adds the $b$ generators coming from
$K$, and by Corollary~\ref{cor:edge-periods} these are precisely the generators
that vanish at both edges. The prequantization level is therefore a genuinely
relative invariant: it is computed by the absolute periods of the target
together with a correction supported on the classes that die under $F_*$.
\end{remark}

\subsection{Functoriality: morphisms of arrows}\label{subsec:functoriality}

Every construction in this article is attached to an arrow $F$ rather than to a
space, so the correct notion of a map between two such situations is a
commutative square. Recording its effect once saves repeating the same
verification in Sections~\ref{sec:actions}--\ref{sec:qham}.

\begin{definition}\label{def:arrowmap}
A \emph{morphism of arrows} $\Phi\colon F\to F'$, where $F\colon M\to N$ and
$F'\colon M'\to N'$, is a pair of smooth maps $\phi\colon M\to M'$,
$\psi\colon N\to N'$ with $F'\circ\phi=\psi\circ F$.
\end{definition}

\begin{theorem}[Functoriality of the relative package]\label{thm:functoriality}
Let $\Phi=(\phi,\psi)\colon F\to F'$ be a morphism of arrows. Then:
\begin{enumerate}[label=\textup{(\roman*)},leftmargin=2.2em]
\item $\Phi^*(\alpha,\beta):=(\psi^*\alpha,\ \phi^*\beta)$ defines a morphism of
cochain complexes $\Phi^*\colon\Om^\bullet(F')\to\Om^\bullet(F)$, so
$\Phi^*d_{F'}=\dF\Phi^*$ and $\Phi^*$ descends to
$H^\bullet(\Om(F'))\to H^\bullet(\Om(F))$;
\item $\Phi_*(S,T):=(\psi_*S,\ \phi_*T)$ defines a morphism of chain complexes
$C_\bullet(F)\to C_\bullet(F')$, so $\Phi_*\partial_F=\partial_{F'}\Phi_*$;
\item the pairing \eqref{eq:pairing} is adjoint:
\begin{equation}\label{eq:adjointness}
  \pair{\Phi^*\Psi}{c}=\pair{\Psi}{\Phi_*c}
  \qquad\text{for all }\Psi\in\Om^k(F'),\ c\in C_k(F);
\end{equation}
\item consequently $P_{\Phi^*\Psi}\subseteq P_\Psi$; in particular, if $\Psi$ is
relatively integral then so is $\Phi^*\Psi$;
\item $\Phi^*$ carries relative prequantizations to relative prequantizations:
if $(L,\nabla,s)$ prequantizes $(F',\Psi)$ then
$(\psi^*L,\psi^*\nabla,\phi^*s)$ prequantizes $(F,\Phi^*\Psi)$;
\item if $\Phi$ is equivariant for an action of $G$ on both arrows, and
$(f_k)$ is a relative homotopy moment map for $(F',\Psi)$, then
$(\Phi^*\!\circ f_k)$ is a relative homotopy moment map for $(F,\Phi^*\Psi)$,
provided the fundamental vector fields correspond, that is
$\phi_*v^M_x=v^{M'}_x$ and $\psi_*v^N_x=v^{N'}_x$.
\end{enumerate}
\end{theorem}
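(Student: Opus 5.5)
The plan is to verify the six items in order, each reducing to a naturality identity on $M$ or $N$ combined with the commuting square $F'\circ\phi=\psi\circ F$.

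For (i), compute
\[
\Phi^*d_{F'}(\alpha,\beta)=(\psi^*d\alpha,\ \phi^*F'^*\alpha-\phi^*d\beta).
\]
Since $\phi^*F'^*=(F'\circ\phi)^*=(\psi\circ F)^*=F^*\psi^*$ and pullbacks commute with $d$, this equals $\dF(\psi^*\alpha,\phi^*\beta)$. For (ii) the computation is the dual one: $\psi_*\partial=\partial\psi_*$, $\phi_*\partial=\partial\phi_*$, and $\psi_*F_*=F'_*\phi_*$. For (iii), use $\int_S\psi^*\alpha=\int_{\psi_*S}\alpha$ and $\int_T\phi^*\beta=\int_{\phi_*T}\beta$ componentwise in \eqref{eq:pairing}.

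For (iv), (ii) shows that $\Phi_*$ maps $Z_{k}(F)$ into $Z_k(F')$. By \eqref{eq:adjointness}, every period $\pair{\Phi^*\Psi}{c}$ equals $\pair{\Psi}{\Phi_*c}\in P_\Psi$. Integrality is then inherited.

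For (v), I will use the definition of relative prequantization from Definition~\ref{def:relprequant}. The conditions are that $L$ carries a Hermitian connection of curvature $-2\pi i\,\alpha$, and that $s$ is a unit section of $F'^*L$ with $F'^*\nabla\, s=-2\pi i\,\beta\otimes s$.

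Pullback connections have pulled-back curvature, so $\psi^*\nabla$ has curvature $-2\pi i\,\psi^*\alpha$. The key identification is the canonical isomorphism
\[
\phi^*F'^*L\cong(F'\circ\phi)^*L=(\psi\circ F)^*L\cong F^*\psi^*L.
\]
It is compatible with the pulled-back connections and Hermitian metrics, so $\phi^*s$ is a unit section of $F^*\psi^*L$. Pulling back the covariant-derivative equation by $\phi$ gives
\[
(F^*\psi^*\nabla)(\phi^*s)=-2\pi i\,\phi^*\beta\otimes\phi^*s,
\]
which is exactly the required equation.

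For (vi), I will use the component equations \eqref{eq:componenteqs}. Applying the cochain map $\Phi^*$ gives $\Phi^*d_{F'}f_k=\dF\Phi^*f_k$ by (i), and $\Phi^*$ is linear, so it passes through the left-hand side. The remaining point is the contraction term: I need
\[
\Phi^*\,\io(v'_{x_1}\wedge\dots\wedge v'_{x_k})\Psi=\io(v_{x_1}\wedge\dots\wedge v_{x_k})\Phi^*\Psi.
\]
This follows from the pointwise identity $\psi^*(\io_{\psi_*u}\alpha)=\io_u\psi^*\alpha$, valid whenever $u$ is $\psi$-related to a vector field, and likewise for $\phi$. Here the relatedness hypotheses $\phi_*v^M_x=v^{M'}_x$ and $\psi_*v^N_x=v^{N'}_x$ are used, and the sign in \eqref{eq:reliota} is preserved because $\Phi^*$ acts componentwise. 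Equivariance of $\Phi$ also makes $(v^N_x,v^M_x)$ an $F$-pair lifting the same $x$, so $\Phi^*\Psi$ is $G$-invariant.

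The main obstacle is (v): it is the only item not purely formal, since it requires the canonical bundle isomorphism above to respect connections and metrics. I would handle it by working in local trivializations or Čech data pulled back along the commuting square, where the check reduces to the naturality of pullback of $U(1)$-valued functions and connection $1$-forms.
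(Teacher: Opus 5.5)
Your proposal is correct and follows the paper's proof essentially item by item: the commuting square gives $\phi^*F'^*=F^*\psi^*$ and $\psi_*F_*=F'_*\phi_*$ for (i)--(ii), change of variables gives (iii), adjointness plus the chain-map property gives (iv), naturality of curvature and the identification $\phi^*F'^*L\cong F^*\psi^*L$ give (v), and commutation of $\Phi^*$ with $\dF$ and with contractions by related fields gives (vi). One small slip is that the $F$-relatedness of $(v^N_x,v^M_x)$ comes from equivariance of $F$ rather than of $\Phi$; this is harmless to the argument.
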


\begin{proof}
(i) Using $F'\circ\phi=\psi\circ F$, hence $\phi^*F'^*=F^*\psi^*$,
\[
  \Phi^*d_{F'}(\alpha,\beta)
  =\bigl(\psi^*d\alpha,\ \phi^*(F'^*\alpha-d\beta)\bigr)
  =\bigl(d\psi^*\alpha,\ F^*\psi^*\alpha-d\phi^*\beta\bigr)
  =\dF\Phi^*(\alpha,\beta).
\]
(ii) Similarly $\psi_*F_*=F'_*\phi_*$, so
$\Phi_*\partial_F(S,T)=(\psi_*\partial S-\psi_*F_*T,\,-\phi_*\partial T)
=(\partial\psi_*S-F'_*\phi_*T,\,-\partial\phi_*T)=\partial_{F'}\Phi_*(S,T)$.

(iii) Both sides equal $\int_{\psi_*S}\alpha-\int_{\phi_*T}\beta$ by the change of
variables formula for integration of forms over smooth chains.

(iv) By (ii) the image of a relative cycle is a relative cycle, so
\eqref{eq:adjointness} exhibits every period of $\Phi^*\Psi$ as a period of $\Psi$.

(v) Curvature and pullback commute, $\curv(\psi^*\nabla)=\psi^*\curv(\nabla)
=-2\pi i\,\psi^*\omega$, and $F^*\psi^*L=\phi^*F'^*L$, under which
$\phi^*s$ satisfies $(F^*\psi^*\nabla)\phi^*s=-2\pi i\,\phi^*\beta\otimes\phi^*s$,
which is \eqref{eq:sectioneq} for $\Phi^*\Psi$.

(vi) The component equations \eqref{eq:componenteqs} are built from $\dF$ and from
multicontractions of $\rvp$ with fundamental vector fields; $\Phi^*$ commutes
with the former by (i) and with the latter by the assumed correspondence of
fundamental vector fields, since $\io_{v}\Phi^*=\Phi^*\io_{v'}$ whenever $v$ and
$v'$ are $\Phi$-related componentwise.
\end{proof}

\begin{remark}[Where functoriality is used]\label{rem:functoriality-uses}
Theorem~\ref{thm:functoriality} is the formal reason for several statements that
would otherwise require separate proofs: invariance of the topological term under
homotopies of relative fields (Theorem~\ref{thm:topterm}) is
\eqref{eq:adjointness} applied to the cylinder; the behaviour of the
Wess--Zumino phase under change of bulk filling
(Theorem~\ref{thm:WZ}) is (iv); and the compatibility of the canonical
quasi-Hamiltonian moment map with fusion and with equivariant maps of
quasi-Hamiltonian spaces (Section~\ref{sec:qham}) is (vi).
\end{remark}

\begin{corollary}[Pullback of level quantization]\label{cor:pullback-level}
Let $\Phi\colon F\to F'$ be a morphism of arrows and $\Psi$ a $d_{F'}$-closed
relative form. If $(F',\Psi)$ satisfies the level-quantization condition, i.e.\
$k\Psi$ is relatively integral, then so does $(F,\Phi^*\Psi)$, at the same level.
The converse fails in general: by Theorem~\ref{thm:period-criterion}(iii) any
homologically injective $F$ imposes no relative condition at all, so pulling back
along such an $F$ can destroy all information.
\end{corollary}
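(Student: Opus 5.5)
The statement to prove is Corollary~\ref{cor:pullback-level}, and I would derive it directly from Theorem~\ref{thm:functoriality}. The plan has two parts: the positive claim (pullback preserves level quantization at the same level) and the negative claim (the converse fails).

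For the positive part, I would first note that $\Phi^*$ is linear, so $\Phi^*(k\Psi)=k\,\Phi^*\Psi$. By Theorem~\ref{thm:functoriality}(i), $\Phi^*\Psi$ is $\dF$-closed, so its periods make sense. Applying Theorem~\ref{thm:functoriality}(iv) to the closed form $k\Psi$ gives
\[
  P_{k\Phi^*\Psi}=P_{\Phi^*(k\Psi)}\subseteq P_{k\Psi}\subseteq\Z .
\]
So $k\,\Phi^*\Psi$ is relatively integral, at the same level $k$. Equivalently, $L(\Psi)\subseteq L(\Phi^*\Psi)$, which is exactly Corollary~\ref{cor:level-monotone}. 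I would cite that corollary rather than repeat the argument. Nothing here is an obstacle; it is a one-line use of adjointness \eqref{eq:adjointness}.

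For the failure of the converse, I would give an explicit counterexample, which is where the content lies. The natural morphism of arrows to use is $\Phi=(\id_M,\id_N)$, the identity of a single arrow $F$. That choice is useless, however, since then $\Phi^*\Psi=\Psi$. Instead I would compare an arrow with the absolute theory, or use two different arrows. The intended reading of the statement is that the relative condition coming from the source is invisible when $F_*$ is injective in degree $n$.

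Concretely, I would take the disc data of Example~\ref{ex:disc} as $F'\colon S^1\hookrightarrow D^2$, with $\Psi=(\omega,\eta)$ chosen so that $c=\int_{D^2}\omega-\int_{S^1}\eta$ is irrational. Then $L(\Psi)=\{0\}$ by Example~\ref{ex:disc-level}. Next I would map into it the arrow $F=\id_{S^1}\colon S^1\to S^1$ by $\phi=\id_{S^1}$ and $\psi=F'$. The square commutes because $F'\circ\id=F'\circ\id$.

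The source map $F_*$ is then an isomorphism, so injective, and Theorem~\ref{thm:period-criterion}(iii) gives $P_{\Phi^*\Psi}=P_{F'^*\omega}$. This is $0$, because $F'^*\omega$ is a $2$-form on a circle. Hence $\Phi^*\Psi$ is integral at every level, while $\Psi$ is integral at no nonzero level. This shows that $L(\Phi^*\Psi)$ can be strictly larger than $L(\Psi)$, and indeed that all information is destroyed.

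The only point needing care is checking that the chosen $\Phi$ really is a morphism of arrows, and that $\Phi^*\Psi=(F'^*\omega,\eta)$ is closed. Both follow immediately from the definitions.
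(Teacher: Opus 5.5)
Your proposal is correct. The positive half is the paper's own argument: Theorem~\ref{thm:functoriality}(iv) applied to $k\Psi$. Citing Corollary~\ref{cor:level-monotone} instead is fine and not circular, since that corollary rests only on Theorem~\ref{thm:functoriality}(iv).

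For the failure of the converse, your route differs from the paper's in being explicit. The paper only points to Example~\ref{ex:torus}. That example computes the periods of a single homologically injective arrow, but by itself it exhibits no morphism of arrows. Your morphism $(\id_{S^1},F')\colon \id_{S^1}\to(S^1\hookrightarrow D^2)$, with $\int_{D^2}\omega-\int_{S^1}\eta$ irrational, is an actual witness: it gives $L(\Phi^*\Psi)=\Z$ against $L(\Psi)=\{0\}$.

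It also uses exactly the mechanism the statement invokes. The source arrow $\id_{S^1}$ is homologically injective, so Theorem~\ref{thm:period-criterion}(iii) leaves only the target periods of $F'^*\omega=0$.

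The same device would turn the paper's citation into a witness as well: map $\id_{S^1}$ into the torus arrow of Example~\ref{ex:torus} with $\theta\notin\Z$. Your version costs one extra line and makes the converse claim rigorous rather than suggestive.
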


\begin{proof}
Immediate from Theorem~\ref{thm:functoriality}(iv) applied to $k\Psi$, and from
Example~\ref{ex:torus} for the failure of the converse.
\end{proof}

\section{Application A: Topological terms of action functionals}
\label{sec:actions}

\begin{remark}[Bibliographical notes]\label{rem:biblio-A}
Topological terms of the kind considered here originate with Novikov
\cite{Novikov} and Witten \cite{Witten}; the cohomological quantization of such
terms, including the boundary case, is due to Alvarez \cite{Alvarez} and
Gaw\c{e}dzki \cite{Gawedzki}, and the gerbe-theoretic treatment of the
boundary sectors to Gaw\c{e}dzki--Reis \cite{GawedzkiReis} and
Carey--Johnson--Murray \cite{CJM}. Freed's account of classical Chern--Simons
theory \cite{Freed} is the model for the three-level structure of
Theorem~\ref{thm:WZ}. What is new here is not the phenomenon but its
formulation: the bulk and boundary contributions are the two components of a
single relative cocycle, and homotopy invariance is the assertion that it is
$\dF$-closed.
\end{remark}

Throughout this section $\rvp=(\omega,\eta)\in\Om^{n+1}(F)$ is $\dF$-closed.

\subsection{Relative fields and their actions}

\begin{definition}\label{def:relfield}
Let $W$ be a compact oriented $(n{+}1)$-manifold with boundary $\partial W$
(possibly empty). A \emph{relative field} on $W$ is a pair of smooth maps
\[
  u\colon W\to N,\qquad v\colon\partial W\to M,
  \qquad\text{with}\quad F\circ v=u|_{\partial W},
\]
and its \emph{action} is
\begin{equation}\label{eq:action}
  S_\rvp(u,v)\;:=\;\int_W u^*\omega\;-\;\int_{\partial W}v^*\eta .
\end{equation}
\end{definition}

This is exactly the structure of a ``bulk term plus boundary counterterm'': the
bulk Lagrangian density is pulled back from $N$, and the boundary correction from
$M$; the constraint $F\circ v=u|_{\partial W}$ says the boundary datum lies over
the bulk field. The fundamental observation is:

\begin{lemma}\label{lem:fieldcycle}
A relative field $(u,v)$ on $W$ determines the relative $(n{+}1)$-cycle
\[
  c(u,v):=\bigl(u_*[W],\;v_*[\partial W]\bigr)\in Z_{n+1}(F),
\]
and $S_\rvp(u,v)=\pair{\rvp}{c(u,v)}$. Moreover, a smooth homotopy of relative
fields $(u_t,v_t)$, $t\in[0,1]$ -- i.e.\ smooth $U\colon W\times[0,1]\to N$,
$V\colon\partial W\times[0,1]\to M$ with $F\circ V=U|_{\partial W\times[0,1]}$ --
determines a relative chain $h$ with $\partial_F h=c(u_1,v_1)-c(u_0,v_0)$.
\end{lemma}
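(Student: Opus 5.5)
We realize the fundamental classes by smooth singular chains. Fix a fundamental chain $[W]\in C_{n+1}(W)$, a sum of smooth simplices from a smooth triangulation compatible with the orientation. Its boundary $\partial[W]$ is then a fundamental chain of $\partial W$ with the boundary orientation, so we may set $[\partial W]:=\partial[W]$, viewed in $C_n(\partial W)$. This is the convention already used in Remark~\ref{rem:lefschetz}.

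With this choice the cycle conditions of Definition~\ref{def:relchains} are immediate. First, $\partial\,v_*[\partial W]=v_*\partial^2[W]=0$. Second, $\partial\,u_*[W]=u_*\partial[W]$; since $u|_{\partial W}=F\circ v$, this equals $(F\circ v)_*[\partial W]=F_*v_*[\partial W]$. So $c(u,v)\in Z_{n+1}(F)$. The identity $S_\rvp(u,v)=\pair{\rvp}{c(u,v)}$ is the change-of-variables formula $\int_{u_*[W]}\omega=\int_W u^*\omega$ and $\int_{v_*[\partial W]}\eta=\int_{\partial W}v^*\eta$, combined with the fact that integration over a fundamental chain equals integration over the oriented manifold.

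For the homotopy, take the product chain $[W\times I]:=[W]\times[I]$ via the Eilenberg--Zilber shuffle (prism) map, and define
\[
  h:=\bigl(U_*([W]\times[I]),\;\epsilon\,V_*([\partial W]\times[I])\bigr)\in C_{n+2}(F),
\]
with a sign $\epsilon$ to be fixed. The Leibniz rule for the cross product gives
\[
  \partial([W]\times[I])=\partial[W]\times[I]+(-1)^{n+1}[W]\times(\{1\}-\{0\}).
\]
Pushing forward by $U$ yields $\partial S=U_*(\partial[W]\times[I])\pm\bigl(u_{1*}[W]-u_{0*}[W]\bigr)$. Because $U|_{\partial W\times I}=F\circ V$, the first term is $F_*V_*([\partial W]\times[I])$. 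The boundary of the second component is $\partial(V_*([\partial W]\times[I]))=(-1)^n\bigl(v_{1*}[\partial W]-v_{0*}[\partial W]\bigr)$, using $\partial[\partial W]=0$.

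Now compute $\partial_F h=(\partial S-F_*T,\,-\partial T)$. Choosing $\epsilon=1$ cancels the $F_*$ terms in the first component. The two components then become $\pm(u_{1*}[W]-u_{0*}[W])$ and $\mp(-1)^n(v_{1*}[\partial W]-v_{0*}[\partial W])$. The step I expect to be the main obstacle is checking that these two signs agree, so that $\partial_F h=\pm\bigl(c(u_1,v_1)-c(u_0,v_0)\bigr)$ with a single global sign: $(-1)^{n+1}$ in the first component against $-(-1)^n=(-1)^{n+1}$ in the second. They agree, and any remaining global sign is absorbed by replacing $h$ with $\pm h$ (equivalently, by using $[I]\times[W]$ instead).

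Finally, smoothness of the prism chains and the fact that a smooth triangulation yields a fundamental cycle are standard; I would cite these from \cite{Hatcher,BottTu} rather than reprove them.
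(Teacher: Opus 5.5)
Your proposal is correct and follows the paper's proof. The cycle condition comes from $u|_{\partial W}=F\circ v$ and $\partial[\partial W]=0$, and $h$ pairs the $U$-pushforward of the prism on $W\times I$ with a signed $V$-pushforward of $\partial W\times I$, so that the $F_*V_*$ terms cancel in the first slot. The only difference is sign bookkeeping: the paper orients the prism by $\partial[P\times I]=[P\times\{1\}]-[P\times\{0\}]-[\partial P\times I]$ (the $[I]\times[P]$ order your parenthetical mentions) and takes $h=\bigl(U_*[W\times I],\,-V_*[\partial W\times I]\bigr)$ with no global sign, whereas your order $[W]\times[I]$ produces the overall factor $(-1)^{n+1}$ that you absorb into $h$ (your intermediate ``$\mp(-1)^n$'' should read $-(-1)^n$, which is what your final sign check correctly uses).
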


\begin{proof}
For the first claim, $\partial_F c(u,v)=\bigl(u_*\partial[W]-F_*v_*[\partial W],
\ -v_*\partial[\partial W]\bigr)=\bigl(u_*[\partial W]-(F\circ v)_*[\partial W],
\,0\bigr)=0$ since $F\circ v=u|_{\partial W}$ and $\partial[\partial W]=0$; the
identification of $S_\rvp$ with the pairing is the definition
\eqref{eq:pairing}. For the second claim set
\[
  h:=\bigl(U_*[W\times I],\;-V_*[\partial W\times I]\bigr),\qquad I=[0,1],
\]
where the fundamental chains are taken with the product orientation and the prism
convention $\partial[P\times I]=[P\times\{1\}]-[P\times\{0\}]-[\partial P\times I]$.
Then
\begin{align*}
\partial_F h
&=\Bigl(U_*\partial[W\times I]+F_*V_*[\partial W\times I],\;
   \partial V_*[\partial W\times I]\Bigr)\\
&=\Bigl(u_{1*}[W]-u_{0*}[W]-U_*[\partial W\times I]+U_*[\partial W\times I],\;
   v_{1*}[\partial W]-v_{0*}[\partial W]\Bigr)\\
&=c(u_1,v_1)-c(u_0,v_0),
\end{align*}
where we used $F_*V_*=U_*$ on $\partial W\times I$ (from
$F\circ V=U|_{\partial W\times I}$) and $\partial[\partial W\times I]
=[\partial W\times\{1\}]-[\partial W\times\{0\}]$ ($\partial W$ being closed).
\end{proof}

\begin{theorem}[Closed relative forms are topological terms]\label{thm:topterm}
Let $\dF\rvp=0$. Then:
\begin{enumerate}[label=\textup{(\roman*)},leftmargin=2.2em]
\item $S_\rvp(u,v)$ is invariant under smooth homotopies of relative fields;
\item $S_\rvp$ depends only on the cohomology class $[\rvp]\in
H^{n+1}(\Om(F))$; in particular, $S_\rvp\equiv0$ whenever $\rvp$ is
$\dF$-exact;
\item \textup{(Converse)} conversely, if $\rvp\in\Om^{n+1}(F)$ is arbitrary
and $S_\rvp(u,v)$ is invariant under smooth homotopies of relative fields,
then $\dF\rvp=0$. Only two test manifolds are needed: $W=S^{n+1}$
\textup{(}closed, detecting the target component\textup{)} and $W=D^{n+1}$
\textup{(}with boundary, detecting the source component\textup{)}.
\end{enumerate}
Thus closed relative forms are \emph{precisely} the homotopy-invariant
topological terms of this type.
\end{theorem}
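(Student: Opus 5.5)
Parts (i) and (ii) will follow directly from Lemma~\ref{lem:fieldcycle} and the relative Stokes theorem (Proposition~\ref{prop:stokes}).

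For (i), given a smooth homotopy $(U,V)$, Lemma~\ref{lem:fieldcycle} produces a relative chain $h$ with $\partial_F h=c(u_1,v_1)-c(u_0,v_0)$. Then
\[
S_\rvp(u_1,v_1)-S_\rvp(u_0,v_0)=\pair{\rvp}{\partial_F h}=\pair{\dF\rvp}{h}=0 .
\]

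For (ii), if $\rvp'=\rvp+\dF\lambda$, then $\pair{\dF\lambda}{c(u,v)}=\pair{\lambda}{\partial_F c(u,v)}=0$, because $c(u,v)$ is a relative cycle.

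The substance is the converse (iii). I will argue by contraposition, using localized test fields. Write $\dF\rvp=(d\omega,\,F^*\omega-d\eta)$.

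\emph{Target component.} First I show $d\omega=0$, using $W=S^{n+1}$, which has empty boundary, so the field is just $u\colon S^{n+1}\to N$ and $S_\rvp(u)=\int_{S^{n+1}}u^*\omega$. Suppose $(d\omega)_p(e_0,\dots,e_{n+1})\neq0$. I take a smooth one-parameter family $u_t$ with $u_0$ constant. Concretely, $u_t$ is a map from $S^{n+1}$ whose image sweeps out, as $t$ varies, a small $(n+2)$-ball in a coordinate chart at $p$ spanned by the $e_i$: for instance the boundary of a small cube $\{x_{n+1}\le t\}\cap[0,\epsilon]^{n+2}$, collapsed via a degree-one map $S^{n+1}\to\partial(\text{cube})$. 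Stokes on $N$ then gives
\[
\frac{d}{dt}\int u_t^*\omega=\int(\text{face})\,\iota_{\partial_t}d\omega\neq0
\]
for small $\epsilon$. Equivalently, the difference $S(u_1)-S(u_0)$ equals the integral of $d\omega$ over the swept chain, which is $\approx\epsilon^{n+2}(d\omega)_p\neq0$. This contradicts invariance.

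\emph{Source component.} Next, with $d\omega=0$ established, I show $F^*\omega=d\eta$, using $W=D^{n+1}$. Fix $q\in M$ and a small $n$-disc-like sphere $\sigma\colon S^n\to M$ in a chart near $q$. Choose $v_t\colon S^n\to M$ as a family of boundary maps sweeping out a small $(n+1)$-cube at $q$ in directions where $\Xi:=F^*\omega-d\eta$ is nonzero, starting from the constant map $v_0$. For the bulk, set $u_t:=F\circ(\text{cone extension of }v_t)$, i.e. $u_t(r x)=F(\tilde v_t(r x))$, where $\tilde v_t\colon D^{n+1}\to M$ extends $v_t$; this is possible since everything lies in a chart, and $F\circ v_t=u_t|_{\partial}$ holds. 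Then
\[
S_\rvp(u_t,v_t)=\int_{D^{n+1}}\tilde v_t^*F^*\omega-\int_{S^n}v_t^*\eta=\int_{D^{n+1}}\tilde v_t^*\Xi
\]
by Stokes on $M$. Choosing $\tilde v_1$ to be a small embedded $(n+1)$-cube where $\Xi\neq0$ and $\tilde v_0$ constant (via the radial homotopy $\tilde v_t(x)=\tilde v_1(tx)$) gives $S(u_1,v_1)\approx\epsilon^{n+1}\Xi_q\neq0$ while $S(u_0,v_0)=0$. This again contradicts invariance.

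The main obstacle is making these localized constructions genuinely smooth and honest homotopies within the required domains. The cleanest route is the radial-scaling homotopy $\tilde v_t(x)=\tilde v_1(tx)$ (and likewise $u_t=u_1(t\,\cdot)$ composed with a degree-one collapse $S^{n+1}\to$ boundary of a cube) together with the leading-order estimate that the integral over an $\epsilon$-cube equals $\epsilon^{\dim}$ times the form at the center plus $O(\epsilon^{\dim+1})$. Corners can be smoothed by precomposing with smooth degree-one maps, which does not change integrals.
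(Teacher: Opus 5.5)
Your proposal is correct and follows essentially the same route as the paper. Parts (i) and (ii) go through Lemma~\ref{lem:fieldcycle} and relative Stokes. Part (iii) uses localized test fields on $S^{n+1}$ and $D^{n+1}$ that shrink to a point inside a chart, takes the source bulk to be $u_t=F\circ\tilde v_t$, and finishes with an $\epsilon\to0$ leading-order estimate.

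The one real difference is in how you evaluate the action. You compute it directly by ordinary Stokes on the disc, $S_\rvp(u_t,v_t)=\int_{D^{n+1}}\tilde v_t^*(F^*\omega-d\eta)$, whereas the paper passes through its sweeping lemma and the variation formula $\pair{\dF\rvp}{h}$. This makes your source step independent of the target step. It also means that if $\tilde v_1$ is a linearly embedded disc and you use the radial scaling $\tilde v_1(tx)$, you avoid the corner-smoothing and radial reparametrization the paper needs.
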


\begin{proof}
(i) By Lemma~\ref{lem:fieldcycle} and the relative Stokes theorem
(Proposition~\ref{prop:stokes}),
\[
  S_\rvp(u_1,v_1)-S_\rvp(u_0,v_0)
  =\pair{\rvp}{\partial_F h}
  =\pair{\dF\rvp}{h}=0 .
\]
(ii) If $\rvp'=\rvp+\dF\theta$, then
$S_{\rvp'}(u,v)-S_\rvp(u,v)=\pair{\dF\theta}{c(u,v)}
=\pair{\theta}{\partial_F c(u,v)}=0$ by Lemma~\ref{lem:fieldcycle}.

(iii) Write $\dF\rvp=(d\omega,\,F^*\omega-d\eta)$; we show separately that
each component vanishes, by producing localized test fields whose homotopy
variations detect it. We first record the elementary construction that makes
``sweeping a simplex'' precise.

\smallskip
\emph{Sweeping lemma.} Let $P$ be a compact oriented $p$-manifold without
boundary, let $U\subseteq\R^{q}$ be convex, let $\phi_0\colon P\to U$ be smooth,
and let $x\in U$. Define the straight-line homotopy
$\phi\colon[0,1]\times P\to U$, $\phi(t,z)=(1-t)\phi_0(z)+t\,x$. Then the
swept chain of $\phi$ --- that is, the image under $\phi$ of the fundamental
chain of $[0,1]\times P$ --- is the cone $x*(\phi_0)_*[P]$, a smooth singular
$(p{+}1)$-chain in $U$ with
\[
  \partial\bigl(x*(\phi_0)_*[P]\bigr)=(\phi_0)_*[P],
\]
since $P$ is closed. Taking $P=\partial\Delta$ for an affine $(p{+}1)$-simplex
$\Delta\subseteq U$, $\phi_0$ a degree-one smooth map $P\to\partial\Delta$, and
$x$ the barycentre of $\Delta$, the swept chain equals $\Delta$ up to
degenerate simplices, which integrate to zero. All maps here are smooth and
all chains are smooth singular chains, so the pairing \eqref{eq:pairing} and
Proposition~\ref{prop:stokes} apply verbatim.

\emph{Target component.}  Take $W=S^{n+1}$, which is closed and oriented; a
relative field on it is simply a smooth map $u\colon S^{n+1}\to N$, the boundary
datum being vacuous, and $S_\rvp(u)=\int_{S^{n+1}}u^*\omega$. Fix $y\in N$, a
chart $\varphi\colon U\xrightarrow{\ \sim\ }U'\subseteq\R^{\dim N}$ around $y$
with $U'$ convex, and $n{+}2$ linearly independent tangent vectors at $y$; for
$\varepsilon>0$ small let $\Delta_\varepsilon\subseteq U'$ be the affine
$(n{+}2)$-simplex they span, scaled by $\varepsilon$. Apply the sweeping lemma
with $P=S^{n+1}$, $\phi_0$ a smooth degree-one map onto
$\partial\Delta_\varepsilon$, and $x$ the barycentre; transporting through
$\varphi^{-1}$ gives a smooth homotopy $(u_t)_{t\in[0,1]}$ of maps
$S^{n+1}\to N$, supported in $U$, whose swept chain is $\Delta_\varepsilon$
modulo degenerate simplices. Homotopy invariance and the variation formula of
(i), applied with $v$ vacuous, give
\[
  0=\pair{\dF\rvp}{(\Delta_\varepsilon,0)}
   =\int_{\Delta_\varepsilon}d\omega .
\]
Dividing by $\operatorname{vol}(\Delta_\varepsilon)$ and letting
$\varepsilon\to0$, continuity of $d\omega$ gives
$d\omega_y(w_1,\dots,w_{n+2})=0$ for the chosen vectors; as $y$, the chart and
the frame are arbitrary, $d\omega=0$.

\emph{Source component.}  Now that $d\omega=0$, the variation formula of
(i) reduces, for any homotopy $(u_t,v_t)$ with swept chains
$(h_N,h_M)$, to
\[
  S_\rvp(u_1,v_1)-S_\rvp(u_0,v_0)
  =\pair{\dF\rvp}{(h_N,h_M)}
  =-\int_{h_M}\bigl(F^*\omega-d\eta\bigr).
\]
Fix $m\in M$ and a chart $\psi\colon V\xrightarrow{\ \sim\ }V'\subseteq
\R^{\dim M}$ around $m$ with $V'$ convex, and let
$\Delta'_\varepsilon\subseteq V'$ be a small affine $(n{+}1)$-simplex as before.
Take $W=D^{n+1}$, so $\partial W=S^{n}$ and $W$ is compact oriented with
boundary. Apply the sweeping lemma with $P=S^{n}$ to obtain a smooth homotopy
$(v_t)$ of maps $S^{n}\to M$, supported in $V$, with swept chain
$\Delta'_\varepsilon$ modulo degenerate simplices. Because $V'$ is convex, each
$v_t$ extends to $\widetilde v_t\colon D^{n+1}\to M$ with image in $V$: in the
chart, extend by the straight-line cone on the barycentre of $V'$, which is
smooth on the closed disc after reparametrizing radially by a smooth function
vanishing to infinite order at $0$; the family $(\widetilde v_t)$ is smooth in
$t$ because $(v_t)$ is. Set $u_t:=F\circ\widetilde v_t$, so
$u_t|_{\partial W}=F\circ v_t$ and $(u_t,v_t)$ is a relative field for every
$t$. Since $d\omega=0$, the variation formula reduces to
$0=-\int_{\Delta'_\varepsilon}(F^*\omega-d\eta)$; dividing by the volume and
letting $\varepsilon\to0$ gives $(F^*\omega-d\eta)_m=0$ as above. As $m$ and
the chart are arbitrary, $F^*\omega-d\eta=0$, and hence $\dF\rvp=0$.
\end{proof}

\begin{remark}\label{rem:whyexactvanishes}
Part (ii) deserves emphasis: an exact \emph{relative} class contributes exactly
zero -- not merely a boundary term -- to the action. Indeed, writing
$\theta=(\vartheta,\tau)$,
\[
  S_{\dF\theta}(u,v)
  =\int_W u^*d\vartheta-\int_{\partial W}v^*\bigl(F^*\vartheta-d\tau\bigr)
  =\int_{\partial W}\Bigl((u|_{\partial W})^*\vartheta-(F\circ v)^*\vartheta\Bigr)
  +\int_{\partial(\partial W)}v^*\tau
  =0 .
\]
The bulk boundary term is cancelled \emph{on the nose} by the variation of the
counterterm: this cancellation, which in Lagrangian field theory is engineered case
by case, is automatic once bulk term and counterterm are assembled into a single
relative form.

A word of caution for readers accustomed to the absolute theory, where an
\emph{exact} bulk form famously does \emph{not} give a trivial functional:
if $\omega=d\vartheta$ as an absolute equality, the bulk term collapses by
Stokes to the boundary functional $\int_{\partial W}(u|_{\partial
W})^*\vartheta$, which is generally nonzero --- this is the usual statement
that a Wess--Zumino term with exact curvature is a boundary term, not zero.
Relative exactness is strictly stronger: $(\omega,\eta)=\dF(\vartheta,\tau)$
requires \emph{in addition} that the boundary counterterm $\eta$ equal
$F^*\vartheta-d\tau$, i.e.\ that the counterterm be built from the same
primitive; it is exactly this matching that makes the boundary functional
above cancel against the variation of the counterterm in the display. The
distinction is the entire point of the relative complex: absolute exactness
of $\omega$ says nothing about $\eta$, while relative exactness ties the two
components together, and only the tied pair contributes zero.
\end{remark}

\subsection{Wess--Zumino terms}

We now treat sources without prescribed bulk: this is the classical Wess--Zumino
mechanism.

\begin{definition}\label{def:WZ}
Let $\Sigma$ be a closed oriented $n$-manifold and $v\colon\Sigma\to M$ a smooth
map such that $(F\circ v)_*[\Sigma]=\partial X$ for some
$X\in C_{n+1}(N;\Z)$. The \emph{Wess--Zumino term} of $v$ relative to the
\emph{bulk extension} $X$ is
\begin{equation}\label{eq:WZ}
  \mathrm{WZ}_\rvp(v;X)\;:=\;\int_X\omega-\int_\Sigma v^*\eta
  \;=\;\pair{\rvp}{\bigl(X,\,v_*[\Sigma]\bigr)} .
\end{equation}
\end{definition}

Note that $\bigl(X,v_*[\Sigma]\bigr)$ is a relative $(n{+}1)$-cycle:
$\partial X=F_*v_*[\Sigma]$ and $\partial v_*[\Sigma]=0$.

It is important to distinguish three levels of ambiguity, tested by three
nested subgroups of relative homology.  Write
$P^N_\rvp:=\bigl\{\pair{\rvp}{(Z,0)}:Z\in Z_{n+1}(N)\bigr\}
=\bigl\{\int_Z\omega\bigr\}$ for the group of \emph{absolute target periods},
realized inside the relative periods $P_\rvp$ through the cycles
$(Z,0)$, i.e.\ through the image of $H_{n+1}(N)\to H_{n+1}(F)$; the
quotient of full relative integrality by this condition is governed, via
the long exact sequence of the cone, by the classes mapping to
$\ker\bigl(H_n(M)\to H_n(N)\bigr)$.

\begin{theorem}[Well-definedness of the Wess--Zumino term]\label{thm:WZ}
Let $\dF\rvp=0$.
\begin{enumerate}[label=\textup{(\roman*)},leftmargin=2.2em]
\item \textup{(Fixed boundary field.)} For fixed $v$, the value
$\mathrm{WZ}_\rvp(v;X)$ modulo $P^N_\rvp$ is independent of the bulk
extension $X$; and this is sharp: the set of differences realized by
changing $X$ with $v$ fixed is exactly $P^N_\rvp$.  Consequently
$\exp\bigl(2\pi i\,\mathrm{WZ}_\rvp(v)\bigr)$ is well defined,
independently of $X$, \emph{if and only if} $P^N_\rvp\subseteq\Z$, i.e.\
iff the absolute target periods of $\omega$ are integral.
\item \textup{(Homotopies of the boundary field.)} Under the condition of
\textup{(i)}, the phase $\exp\bigl(2\pi i\,\mathrm{WZ}_\rvp(v)\bigr)$ is
invariant under smooth homotopies of $v$ through maps admitting bulk
extensions; no further integrality is needed for this.
\item \textup{(Relative integrality.)} Since $\dF\rvp=0$, the
pairing $z\mapsto\pair{\rvp}{z}$ already descends from relative cycles to
a homomorphism $H_{n+1}(F;\Z)\to\R$.  The class $\rvp$ is relatively
integral precisely when every relative period is an integer, or equivalently
when the induced character
\[
  H_{n+1}(F;\Z)\longrightarrow U(1),\qquad
  [z]\longmapsto \exp\bigl(2\pi i\pair{\rvp}{z}\bigr),
\]
is trivial.  This condition is stronger than the integrality of the absolute
target periods in \textup{(i)} and is exactly the condition required for the
geometric realizations of Section~\ref{sec:prequantization}: relative line
bundles with connection in degree two and relative gerbes in degree three.
\item \textup{(Exact case.)} If $\rvp=\dF\theta$ is exact, then
$\mathrm{WZ}_\rvp(v;X)=0$ for every $v$ and $X$; in particular
$P_\rvp=\{0\}$, and the Wess--Zumino term carries no information: the bulk
contribution is cancelled on the nose by the counterterm, as in
Remark~\textup{\ref{rem:whyexactvanishes}}.
\end{enumerate}
\end{theorem}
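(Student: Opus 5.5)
The proof treats the four parts in order, and each reduces to the relative Stokes theorem (Proposition~\ref{prop:stokes}) together with Lemma~\ref{lem:cycle-shape} and Lemma~\ref{lem:fieldcycle}.

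\emph{Part (i).} Fix $v$ and two bulk extensions $X,X'$ with $\partial X=\partial X'=F_*v_*[\Sigma]$. Then $Z:=X-X'\in Z_{n+1}(N)$, and by \eqref{eq:WZ}
\[
  \mathrm{WZ}_\rvp(v;X)-\mathrm{WZ}_\rvp(v;X')=\pair{\rvp}{(Z,0)}=\int_Z\omega\in P^N_\rvp .
\]
For sharpness, take any $Z\in Z_{n+1}(N)$ and put $X'=X+Z$. This is again a bulk extension, and the difference equals $\int_Z\omega$, so every element of $P^N_\rvp$ occurs as a difference. The ``if and only if'' then follows directly. The phase is independent of $X$ exactly when every realized difference lies in $\Z$. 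Since the set of realized differences is $P^N_\rvp$, this is the condition $P^N_\rvp\subseteq\Z$.

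\emph{Part (ii).} Let $V\colon\Sigma\times I\to M$ be a smooth homotopy from $v_0$ to $v_1$, and let $X_0,X_1$ be bulk extensions of $v_0,v_1$. Consider the prism chain $H:=V_*[\Sigma\times I]$. Since $\Sigma$ is closed, $\partial H=v_{1*}[\Sigma]-v_{0*}[\Sigma]$. The plan is to compare the two relative cycles through the relative chain $h=(F_*H,\,-H)$, in analogy with Lemma~\ref{lem:fieldcycle}. One checks that
\[
  (X_1,v_{1*}[\Sigma])-(X_0,v_{0*}[\Sigma])-\partial_F h=(Z,0),
  \qquad Z:=X_1-X_0-F_*H .
\]
Applying $\partial$ to $Z$ gives zero, so $Z$ is a target cycle. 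Pairing with $\rvp$, the Stokes identity $\pair{\rvp}{\partial_F h}=\pair{\dF\rvp}{h}=0$ shows that the two Wess--Zumino values differ by $\int_Z\omega\in P^N_\rvp$. Under (i) this difference is an integer, so the phase is unchanged. The main bookkeeping risk lies in the signs of $\partial_F h$, and I would verify them exactly as in the proof of Lemma~\ref{lem:fieldcycle}.

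\emph{Part (iii).} Descent of the pairing to $H_{n+1}(F;\Z)$ is Proposition~\ref{prop:stokes}. The character is trivial exactly when every value $\pair{\rvp}{z}$ lies in $\Z$, which is the definition of relative integrality. Relative integrality implies target integrality because $P^N_\rvp=P_\omega\subseteq P_\rvp$ by Theorem~\ref{thm:period-criterion}(i). To see that the condition is strictly stronger, Example~\ref{ex:disc} suffices. There $P_\omega=0$, while the defect $\int_{D^2}\omega-\int_{S^1}\eta$ may be non-integral. The statement that this is the condition needed for the geometric realizations is a forward reference; I would cite Theorem~\ref{thm:prequantization} and Section~\ref{subsec:gerbes} for it rather than prove it here.

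\emph{Part (iv).} If $\rvp=\dF\theta$, then $\mathrm{WZ}_\rvp(v;X)=\pair{\dF\theta}{(X,v_*[\Sigma])}=\pair{\theta}{\partial_F(X,v_*[\Sigma])}=0$, since $(X,v_*[\Sigma])$ is a relative cycle. By Lemma~\ref{lem:cycle-shape} every relative cycle has this shape, so the same computation gives $P_\rvp=\{0\}$. The explicit on-the-nose cancellation between bulk and counterterm is then exactly the display in Remark~\ref{rem:whyexactvanishes}.

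The only step that is not routine is the sign-careful prism argument in (ii). The remaining parts are direct applications of Stokes and of Theorem~\ref{thm:period-criterion}.
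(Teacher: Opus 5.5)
Parts (i), (iii) and (iv) of your proposal follow the paper's proof essentially verbatim. In (iii), your use of Example~\ref{ex:disc} for strictness makes concrete what the paper only states in one line. In (iv), Lemma~\ref{lem:cycle-shape} is unnecessary: $\pair{\dF\theta}{c}=\pair{\theta}{\partial_F c}=0$ holds for every relative cycle $c$, whatever its shape.

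Part (ii) has a concrete error, in exactly the step you flagged. The chain $h=(F_*H,-H)$ is not a relative chain. We have $C_{n+2}(F)=C_{n+2}(N)\oplus C_{n+1}(M)$, but $F_*H\in C_{n+1}(N)$ is one degree too low for the first slot. Applying $\partial_F$ formally gives first component $F_*\partial H+F_*H$, which mixes degrees. So your displayed identity fails for this $h$, and the Stokes step $\pair{\rvp}{\partial_F h}=\pair{\dF\rvp}{h}$ is not even defined. The analogy with Lemma~\ref{lem:fieldcycle} breaks at this point. There, the first slot was the $(n{+}2)$-chain $U_*[W\times I]$ swept out by a homotopy of bulk fields. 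In the Wess--Zumino setting no bulk homotopy exists.

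The repair is to take $h=(0,-H)\in C_{n+2}(F)$. Then:
\begin{itemize}
\item $\partial_F h=(F_*H,\ \partial H)=(F_*H,\ v_{1*}[\Sigma]-v_{0*}[\Sigma])$;
\item your identity $(X_1,v_{1*}[\Sigma])-(X_0,v_{0*}[\Sigma])-\partial_F h=(X_1-X_0-F_*H,\,0)$ holds exactly;
\item $\pair{\dF\rvp}{h}=\int_H(F^*\omega-d\eta)=0$.
\end{itemize}
In other words, $F_*H$ belongs in $\partial_F h$, not in $h$.

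With this fix your argument is correct, and it is a mild repackaging of the paper's. The paper chooses the transported filling $X_1:=X_0+F_*H$, so that $Z=0$. The real-valued difference then vanishes exactly, by $F^*\omega=d\eta$ and Stokes on $\Sigma\times I$, and (i) handles any other filling. You instead treat arbitrary fillings $X_0,X_1$ at once and land directly in $P^N_\rvp$. The paper's version shows a bit more: the real number $\mathrm{WZ}_\rvp$ is constant along the homotopy when the filling is transported. Yours makes the claim that ``no further integrality is needed'' immediate.
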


\begin{proof}
(i) If $X'$ is another bulk extension, then $\partial(X-X')=0$, so
$(X-X',0)\in Z_{n+1}(F)$ and
$\mathrm{WZ}_\rvp(v;X)-\mathrm{WZ}_\rvp(v;X')
=\pair{\rvp}{(X-X',0)}=\int_{X-X'}\omega\in P^N_\rvp$.  Sharpness:
given any $Z\in Z_{n+1}(N)$ and any extension $X$ of $v$, the chain
$X':=X+Z$ is another extension of the same $v$, and the difference of the
two values is $\int_Z\omega$; so every element of $P^N_\rvp$ is realized.
The equivalence for the exponential is immediate: the phase is
$X$-independent iff all realized differences are integers iff
$P^N_\rvp\subseteq\Z$.
(ii) By (i) the exponential is independent of $X$. For
homotopy invariance, let $v_t$ be a homotopy with bulk extensions $X_0$ of $v_0$;
the chain $V_*[\Sigma\times I]\in C_{n+1}(M)$ satisfies
$\partial V_*[\Sigma\times I]=v_{1*}[\Sigma]-v_{0*}[\Sigma]$, so
$X_1:=X_0+F_*V_*[\Sigma\times I]$ is a bulk extension of $v_1$, and
\[
  \mathrm{WZ}_\rvp(v_1;X_1)-\mathrm{WZ}_\rvp(v_0;X_0)
  =\int_{F_*V_*[\Sigma\times I]}\omega
  -\int_{\Sigma}v_1^*\eta+\int_{\Sigma}v_0^*\eta
  =\int_{\Sigma\times I}V^*F^*\omega-\int_{\partial(\Sigma\times I)}V^*\eta,
\]
which vanishes by $F^*\omega=d\eta$ and Stokes on $\Sigma\times I$; note
that this computation uses only the closedness of $\rvp$, not any
integrality beyond (i).
(iii) Closedness and the relative Stokes formula imply that the real-valued
pairing is constant on relative homology classes.  By definition,
$\rvp$ is relatively integral exactly when
$\pair{\rvp}{z}\in\Z$ for every integral relative cycle $z$; this is
equivalent to triviality of the displayed $U(1)$-character.  The geometric
interpretation is supplied by Theorem~\ref{thm:prequantization} in degree two
and by the relative-gerbe discussion in Section~\ref{subsec:gerbes} in degree
three.  The condition can be strictly stronger than (i), because relative
cycles with nonzero $M$-component need not arise as differences of two bulk
fillings of the same boundary field.
(iv) For exact $\rvp$ every relative period vanishes
(Proposition~\ref{prop:stokes}), and
$\mathrm{WZ}_{\dF\theta}(v;X)=\pair{\theta}{\partial_F(X,v_*[\Sigma])}=0$.
\end{proof}

\begin{example}[Classical Wess--Zumino--Witten term]\label{ex:WZW}
The absolute mechanism is the degenerate case $M=\emptyset$: there is no boundary
datum, and for a closed $\omega\in\Om^{n+1}(N)$ and a map $\varphi\colon\Sigma\to N$
with $\varphi_*[\Sigma]=\partial X$ one sets
$\mathrm{WZ}_\omega(\varphi;X):=\int_X\omega=\pair{(\omega,0)}{(X,0)}$; the
proofs of Theorem~\ref{thm:WZ}(i)--(ii) apply verbatim with $T=0$, and the
ambiguity group is the group of absolute periods of $\omega$, realized inside
$P_\rvp$ by the relative cycles $(X-X',0)$. For $N=G$ compact simple and
$\omega=k\eta$ with $\eta$ normalized so that $[\eta]$ generates the image of
$H^3(G;\Z)$ in $H^3(G;\R)$, this is the familiar level-$k$ Wess--Zumino term of
the WZW model, well defined in $U(1)$ precisely for $k\in\Z$. The genuinely
relative Definition~\ref{def:WZ} is the refinement in which the source field is
allowed to take values in an auxiliary space $M$ over $N$, with the trivializing
form $\eta_M$ absorbing the boundary of the bulk term -- the situation of the next
example.
\end{example}

\begin{example}[Quasi-Hamiltonian Wess--Zumino term and level quantization]
\label{ex:qhamWZ}
Let $(M,\omega,\mu)$ be a quasi-Hamiltonian $G$-space, $\rvp=(\eta,\omega)\in
\Om^3(\mu)$ its relative $2$-plectic structure, and $k\in\Z_{>0}$. For a closed
oriented surface $\Sigma$ and $v\colon\Sigma\to M$ with bulk extension
$X\in C_3(G;\Z)$ of $\mu\circ v$,
\[
  \mathrm{WZ}_{k\rvp}(v;X)=k\Bigl(\int_X\eta-\int_\Sigma v^*\omega\Bigr).
\]
By Theorem~\ref{thm:WZ}(iii), the class $k\rvp=(k\eta,k\omega)$ is
relatively integral if and only if all of its periods on relative
$3$-cycles $(X,\Sigma)$ of $\mu$ are integers. This is precisely the \emph{level-$k$
prequantization condition} for quasi-Hamiltonian $G$-spaces
\cite[Def.~6.2, Prop.~6.4]{MeinrenkenLectures}, introduced via relative (co)homology
by Krepski \cite{Krepski} and via relative gerbes by Shahbazi \cite{Shahbazi}: the
relative multisymplectic framework derives it as an instance of the general
topological-term mechanism. We return to the geometric (gerbe) interpretation in
Section~\ref{subsec:gerbes}.
\end{example}

\section{Application B: Relative prequantization}
\label{sec:prequantization}

\begin{remark}[Bibliographical notes]\label{rem:biblio-B}
Prequantization in the symplectic setting is due to Souriau \cite{Souriau} and
Kostant \cite{Kostant}. The higher-degree theory is governed by bundle gerbes
\cite{Murray,MurrayStevenson,Waldorf,Brylinski} and, in the form closest to the
one used here, by differential characters and differential cohomology
\cite{CheegerSimons,HopkinsSinger}. The relative case in degree three is
Shahbazi's theory of relative gerbes \cite{ShahbaziGerbes}, applied to
quasi-Hamiltonian spaces in \cite{Shahbazi} and to moduli of flat bundles in
\cite{Krepski,KrepskiThesis}. Theorem~\ref{thm:prequantization} is the degree-two
counterpart, proved directly by a \v{C}ech mapping-cone argument.
\end{remark}

\subsection{Relative prequantization of relative $2$-forms}

Throughout this subsection $\rvp=(\omega,\eta)\in\Om^2(F)$ is a $\dF$-closed
relative $2$-form: $d\omega=0$ and $F^*\omega=d\eta$. Recall that for a Hermitian
line bundle with connection $(L,\nabla)$ we normalize the curvature by
$\curv(\nabla)\in\Om^2(\cdot;i\R)$, locally $\curv=-2\pi i\,da$ for
$\nabla=d-2\pi i\,a$, and the holonomy of a loop $\gamma$ bounding a $2$-chain $S$
is
\begin{equation}\label{eq:holonomy}
  \hol_\nabla(\gamma)=\exp\Bigl(-2\pi i\int_S a_{\mathrm{curv}}\Bigr),
  \qquad \curv(\nabla)=-2\pi i\,a_{\mathrm{curv}},
\end{equation}
cf.\ \cite{Kostant,Brylinski}.

\begin{definition}\label{def:relprequant}
A \emph{relative prequantization} of $(F,\rvp)$ is a triple $(L,\nabla,s)$
where:
\begin{enumerate}[label=\textup{(\roman*)},leftmargin=2.2em]
\item $(L,\nabla)$ is a Hermitian line bundle with unitary connection on $N$ with
$\curv(\nabla)=-2\pi i\,\omega$;
\item $s$ is a unit-norm section of $F^*L\to M$ satisfying
\begin{equation}\label{eq:sectioneq}
  (F^*\nabla)\,s\;=\;-2\pi i\,\eta\otimes s ,
\end{equation}
i.e.\ $\eta$ is the connection $1$-form of $F^*\nabla$ in the trivialization $s$.
\end{enumerate}
\end{definition}

Thus a relative prequantization is a prequantum line bundle for $\omega$ on the
target, together with a trivialization over the source whose logarithmic
derivative is prescribed by $\eta$; equation \eqref{eq:sectioneq} is consistent
precisely because $\curv(F^*\nabla)=-2\pi i\,F^*\omega=-2\pi i\,d\eta$.

\begin{theorem}[Relative Weil--Kostant theorem]\label{thm:prequantization}
$(F,\rvp)$ admits a relative prequantization if and only if $\rvp$ is
relatively integral, i.e.\ $\pair{\rvp}{c}\in\Z$ for every relative $2$-cycle
$c\in Z_2(F)$ \textup{(}equivalently, for $M,N$ of finite type, iff
$[\rvp]$ lies in the image of $H^2(F;\Z)\to H^2(\Om(F))$,
Remark~\textup{\ref{rem:integralclass})}. Moreover, when it exists, the set of
isomorphism classes of relative prequantizations is a torsor over the group
$H^1(F;U(1))$ of flat relative data.
\end{theorem}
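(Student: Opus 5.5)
Necessity goes by holonomy. Let $(L,\nabla,s)$ be a relative prequantization and $(S,T)\in Z_2(F)$. Then $T$ is a $1$-cycle in $M$, which we may take to be a sum of loops after the usual reduction of smooth singular $1$-cycles, and $\partial S=F_*T$. The holonomy of $F^*\nabla$ around $T$ can be computed in two ways. First, through the global unit section $s$ and \eqref{eq:sectioneq}: in the trivialization $s$ the connection form is $-2\pi i\,\eta$, so $\hol_{F^*\nabla}(T)=\exp(2\pi i\int_T\eta)$ (we fix the sign convention against \eqref{eq:holonomy}). Second, $\hol_{F^*\nabla}(T)=\hol_\nabla(F_*T)$, and since $F_*T=\partial S$ the curvature formula \eqref{eq:holonomy} gives $\hol_\nabla(\partial S)=\exp(2\pi i\int_S\omega)$ with the same sign convention. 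Equating the two gives $\exp\bigl(2\pi i\pair{\rvp}{(S,T)}\bigr)=1$, which is relative integrality. The careful bookkeeping needed is to make holonomy well defined on chains rather than only on embedded loops; the Cheeger--Simons character of $(L,\nabla)$ on $Z_1(N)$ provides this.

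Sufficiency is a \v{C}ech mapping-cone construction. Choose good covers $\mathcal U=\{U_a\}$ of $N$ and $\mathcal V=\{V_\alpha\}$ of $M$, refined so that there is a map $\alpha\mapsto a(\alpha)$ with $F(V_\alpha)\subseteq U_{a(\alpha)}$; this makes $F^*$ a map of \v{C}ech--de Rham double complexes. Relative integrality together with Remark~\ref{rem:integralclass} and Theorem~\ref{thm:period-criterion}(ii) gives two facts. First, $P_\omega\subseteq\Z$, so by the classical Weil--Kostant theorem there is $(L,\nabla)$ on $N$ with curvature $-2\pi i\,\omega$, given by local potentials $a_a$ and transitions $g_{ab}$. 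Second, the pulled-back bundle $F^*L$ with connection $F^*\nabla$ and the form $\eta$ define a flat connection $F^*\nabla+2\pi i\,\eta$ on $F^*L$, since its curvature is $-2\pi i(F^*\omega-d\eta)=0$. We need a unit section $s$ that is parallel for this flat connection. That requires its holonomy around every loop $T$ in $M$ to be trivial. For loops with $F_*[T]=0$, the holonomy equals $\exp\bigl(2\pi i\,\Theta_\rvp[T]\bigr)$ computed as above, and it is trivial by integrality. For loops with $F_*[T]\neq0$, however, there is no such control, and this is the main obstacle. The remedy is to use the freedom in $(L,\nabla)$: twist it by a flat bundle on $N$ whose holonomy character $\chi$ on $H_1(N)$ satisfies $\chi(F_*[T])=\hol_{F^*\nabla+2\pi i\eta}(T)^{-1}$. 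Such a $\chi$ exists because the right-hand side is a character on $H_1(M)$ that is trivial on $K$, hence descends to $\operatorname{im}F_*\subseteq H_1(N)$, and extends to all of $H_1(N)$ since $U(1)$ is divisible. After the twist the flat connection on $F^*L$ has trivial holonomy, so on each component of $M$ a parallel unit section exists, giving $s$. In \v{C}ech language this is the statement that the relative class of $(\omega,\eta)$ lifts to a cocycle $(g_{ab},a_a;\,\lambda_\alpha)$ in the cone of $F^*$ on $\underline{U(1)}$-valued data. I would present the argument in that form, to match the paper's promised \v{C}ech proof.

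For the torsor statement, let $(L,\nabla,s)$ and $(L',\nabla',s')$ be two prequantizations. Then $L'\otimes L^{-1}$ is a flat $U(1)$ bundle on $N$, and $s'\otimes s^{-1}$ is a parallel section of its pullback to $M$. Such pairs, up to isomorphism, are exactly the degree-one hypercohomology of the cone of $F^*\colon\underline{U(1)}_N\to\underline{U(1)}_M$ on locally constant sheaves, which is $H^1(F;U(1))$. Tensoring gives a free and transitive action, and isomorphisms of triples correspond to coboundaries in the cone.
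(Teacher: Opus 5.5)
Your proposal is correct. Necessity and the torsor statement follow the paper's arguments: holonomy of $F^*\nabla$ around $T$ is computed once via $s$ and once via a filling of $F_*T$, and the torsor comes from the difference construction. Your sufficiency proof, however, takes a genuinely different route.

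The paper stays inside the \v{C}ech mapping cone throughout:
\begin{itemize}
\item It chooses primitives $\kappa_i$ of $\omega$, functions $f_{ij}$ with $\kappa_j-\kappa_i=df_{ij}$, and $h_a$ with $dh_a=F^*\kappa_{r(a)}-\eta$.
\item It assembles these into the locally constant real cone cocycle $(c_{ijk};m_{ab})$ representing $[\rvp]$.
\item It uses an integral lift of $[\rvp]$ to $H^2(F;\Z)$ (Lemmas~\ref{lem:cech} and~\ref{lem:correction}) to shift $f_{ij}$ and $h_a$ by constants until both components are integer-valued.
\item It then produces $L$, $\nabla$ and $s$ simultaneously by exponentiation.
\end{itemize}

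You instead work in two stages. First you apply the absolute Weil--Kostant theorem on $N$, using only $P_\omega\subseteq\Z$. You then note that $F^*\nabla+2\pi i\,\eta$ is a flat unitary connection whose parallel unit sections are exactly the solutions of \eqref{eq:sectioneq}. Finally you kill its holonomy character $\rho\in\Hom(H_1(M;\Z),U(1))$ by twisting $(L,\nabla)$ with a flat bundle on $N$.

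Two facts make this work, and you identify both. First, $\rho$ is trivial on $K=\ker F_*$ exactly because $\Theta_\rvp$ is integral. Second, $\Hom(-,U(1))$ is exact because $U(1)$ is divisible. Hence $\rho$ lies in the image of $F^*\colon\Hom(H_1(N;\Z),U(1))\to\Hom(H_1(M;\Z),U(1))$, and every character of $H_1(N;\Z)$ is the holonomy of a flat $U(1)$-bundle on $N$.

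What your route buys:
\begin{itemize}
\item It bypasses the \v{C}ech comparison lemmas entirely.
\item It uses relative integrality only in the concrete form of Theorem~\ref{thm:period-criterion}(ii). It is therefore visibly free of the finite-type hypothesis attached to Remark~\ref{rem:integralclass}, through which the paper obtains its integral lift.
\item It shows exactly which freedom is spent: the prequantum bundle on $N$ must be chosen within its $H^1(N;U(1))$-torsor so as to be compatible with the source.
\end{itemize}

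What the paper's route buys:
\begin{itemize}
\item It builds the triple directly from a representative of the relative class.
\item It identifies the only input as a single integral cone class.
\item It uses the same \v{C}ech cone model that gives the $H^1(F;U(1))$-torsor and serves as the template for the relative-gerbe case.
\end{itemize}

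Your closing plan to recast the argument in \v{C}ech form is therefore unnecessary. It suffices to state explicitly that a flat unitary connection with trivial holonomy admits a global parallel unit section on each component of $M$.
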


\begin{proof}
\emph{Necessity.} Let $(L,\nabla,s)$ be a relative prequantization and
$(S,T)\in Z_2(F)$ a relative $2$-cycle, which we may take smooth. Since
$\partial T=0$, $T$ is an integral combination of smooth loops
$\gamma_1,\dots,\gamma_r$ in $M$, and $\partial S=F_*T$. Compute the parallel
transport of $\nabla$ along the $1$-cycle $F_*T$ in two ways. First, since
$\partial S=F_*T$, formula \eqref{eq:holonomy} (extended multiplicatively to
cycles bounding chains) gives
\[
  \hol_\nabla(F_*T)=\exp\Bigl(-2\pi i\int_S\omega\Bigr).
\]
Second, along each loop $F\circ\gamma_j$ the parallel transport of $\nabla$
coincides with that of $F^*\nabla$ along $\gamma_j$; in the unit trivialization
$s$, equation \eqref{eq:sectioneq} says the connection $1$-form is $\eta$, so
\[
  \hol_\nabla(F_*T)=\hol_{F^*\nabla}(T)
  =\exp\Bigl(-2\pi i\int_T\eta\Bigr).
\]
Equating,
$\exp\bigl(-2\pi i(\int_S\omega-\int_T\eta)\bigr)=1$, i.e.\
$\pair{\rvp}{(S,T)}\in\Z$. Arbitrary (continuous) relative cycles are handled by
smooth approximation.

\emph{Sufficiency.}  We first assemble the \v{C}ech machinery; the reader
willing to grant Lemmas~\ref{lem:cech}--\ref{lem:correction} may skip to the
construction.

\emph{The relative \v{C}ech complex.}  Both $M$ and $N$ are smooth
paracompact manifolds, so good covers exist; no further topological
hypothesis is needed for the theorem (finite type enters only in the
reformulation of Remark~\ref{rem:integralclass}).  Fix a good cover
$\{U_i\}_{i\in I}$ of $N$, a good cover $\{V_a\}_{a\in A}$ of $M$, and a
refinement map $r\colon A\to I$ with $F(V_a)\subseteq U_{r(a)}$ (such data
exist: refine any good cover of $M$ against $F^{-1}\{U_i\}$).  For an
abelian group (or sheaf) $\Lambda$, define the \emph{\v{C}ech mapping cone}
\[
  \check C^{k}(F;\Lambda)
  :=\check C^{k}\bigl(\{U_i\};\Lambda\bigr)\oplus
    \check C^{k-1}\bigl(\{V_a\};\Lambda\bigr),
  \qquad
  D(c,m):=\bigl(\delta c,\ r^{\#}c-\delta m\bigr),
\]
where $\delta$ is the \v{C}ech differential and
$(r^{\#}c)_{a_0\dots a_{k}}:=c_{r(a_0)\dots r(a_{k})}\circ F$ is the
pullback along $(F,r)$; the sign conventions mirror those of the de~Rham
cone \eqref{eq:cone}, and $D^2=0$ because $r^{\#}$ is a chain map.

\begin{lemma}\label{lem:cech}
For $\Lambda=\R$ or $\Z$ there are natural isomorphisms
\[
  \check H^{k}(F;\Lambda)\;\cong\;H^{k}(F;\Lambda),
\]
with $H^{k}(F;\Lambda)$ the singular cohomology of the cone of $F^*$ on
singular cochains \textup{(}for $\Lambda=\R$, also
$\cong H^{k}(\Om(F))$ via the de~Rham map\textup{)}, compatibly with the
change of coefficients $\Z\to\R$, and independent of the choice of good
covers and refinement map up to canonical isomorphism.
\end{lemma}

\begin{proof}
On each row, the \v{C}ech complex of a good cover computes singular (=
sheaf) cohomology of the underlying manifold, naturally in the cover, by
the standard double-complex argument \cite[\S8--\S10]{BottTu}; the maps
$r^{\#}$ and $F^*$ are intertwined by these row quasi-isomorphisms
(both are induced by $(F,r)$ on the \v{C}ech--singular double complex).
A quasi-isomorphism of rows induces a quasi-isomorphism of cones (five
lemma applied to the two long exact cone sequences), which is the claimed
isomorphism; the same argument gives the de~Rham comparison for
$\Lambda=\R$, and naturality in refinements: two refinement maps
$r,r'$ are related by a \v{C}ech chain homotopy (again \cite{BottTu}),
which induces a homotopy of cone maps, so the isomorphism is independent
of $r$; passing to the colimit over pairs of good covers removes the
dependence on the covers.
\end{proof}

\begin{lemma}[Integral correction]\label{lem:correction}
Let $(c,m)\in\check C^{2}(F;\R)$ be a $D$-cocycle with locally constant
components whose class lies in the image of
$\check H^{2}(F;\Z)\to\check H^{2}(F;\R)$.  Then there is a locally
constant cochain $(b,p)\in\check C^{1}(F;\R)$ with
$(c,m)-D(b,p)$ integer-valued.
\end{lemma}

\begin{proof}
Choose an integral $D$-cocycle $(n,q)\in\check C^{2}(F;\Z)$ mapping to
the class of $(c,m)$; then $(c-n,\,m-q)$ is a real $D$-coboundary,
$(c-n,m-q)=D(b,p)$ for some $(b,p)\in\check C^{1}(F;\R)$, which may be
taken locally constant since the cocycle and the cohomology of the good
covers are computed in locally constant cochains.  Then
$(c,m)-D(b,p)=(n,q)$ is integer-valued.
\end{proof}

\emph{Construction.}  Assume relative integrality; by
Remark~\ref{rem:integralclass} the class of $\rvp$ admits an integral
lift.
Since $d\omega=0$, choose $\kappa_i\in\Om^1(U_i)$ with $d\kappa_i=\omega$ and
smooth $f_{ij}\in C^\infty(U_{ij})$ with $\kappa_j-\kappa_i=df_{ij}$ and
$f_{ij}+f_{ji}=0$; then $c_{ijk}:=f_{ij}+f_{jk}-f_{ik}$ is locally constant. On
$M$, since $d\bigl(F^*\kappa_{r(a)}-\eta\bigr)=F^*\omega-F^*\omega=0$ on the
contractible $V_a$, choose $h_a\in C^\infty(V_a)$ with
$dh_a=F^*\kappa_{r(a)}-\eta$; then
$m_{ab}:=h_b-h_a-F^*f_{r(a)r(b)}$ is locally constant on $V_{ab}$. The collection
$(c_{ijk};m_{ab})$ is locally constant and is a $D$-cocycle in
$\check C^{2}(F;\R)$: $\delta c=0$ is the standard computation, and
$(r^{\#}c-\delta m)_{ab c}=0$ follows by expanding the definitions of
$m$ using $c_{r(a)r(b)r(c)}=f_{r(a)r(b)}+f_{r(b)r(c)}-f_{r(a)r(c)}$.  The
double-complex comparison of Lemma~\ref{lem:cech} identifies its class
with $[\rvp]$ (the standard \v{C}ech--de~Rham zig-zag, performed
simultaneously in the two rows and intertwined by $(F,r)$).  By relative
integrality and Lemma~\ref{lem:correction}, there is a locally constant
$(b,p)=\bigl((b_{ij}),(p_a)\bigr)$ such that after replacing
$f_{ij}$ by $f_{ij}-b_{ij}$ and $h_a$ by $h_a-p_a$ --- which changes
neither $d f_{ij}$ nor $dh_a$, hence neither $\kappa$-data nor the
verification above --- one has
\begin{equation}\label{eq:integralcocycle}
  c_{ijk}\in\Z
  \qquad\text{and}\qquad
  m_{ab}\in\Z .
\end{equation} Now define:
\begin{itemize}[leftmargin=2em]
\item $L\to N$ by the transition functions $g_{ij}:=e^{2\pi i f_{ij}}$ on
$U_{ij}$; the cocycle condition $g_{ij}g_{jk}g_{ki}=e^{2\pi i c_{ijk}}=1$ holds by
\eqref{eq:integralcocycle};
\item $\nabla$ by $\nabla:=d-2\pi i\,\kappa_i$ in the unit frame $e_i$ over $U_i$;
compatibility across overlaps holds since
$\kappa_j-\kappa_i=\tfrac{1}{2\pi i}\,d\log g_{ij}$, and
$\curv(\nabla)=-2\pi i\,d\kappa_i=-2\pi i\,\omega$;
\item $s$ over $V_a$ by $s|_{V_a}:=e^{2\pi i h_a}\,F^*e_{r(a)}$. On $V_{ab}$,
using $F^*e_{r(b)}=e^{-2\pi i F^*f_{r(a)r(b)}}F^*e_{r(a)}$,
\[
  e^{2\pi i h_b}F^*e_{r(b)}
  =e^{2\pi i\left(h_b-F^*f_{r(a)r(b)}\right)}F^*e_{r(a)}
  =e^{2\pi i m_{ab}}\,e^{2\pi i h_a}F^*e_{r(a)}
  =s|_{V_a}
\]
by \eqref{eq:integralcocycle}, so $s$ is a global unit section of $F^*L$. Finally,
in the frame $F^*e_{r(a)}$ the connection $F^*\nabla$ has potential
$F^*\kappa_{r(a)}$, so
\[
  (F^*\nabla)s
  =\bigl(2\pi i\,dh_a-2\pi i\,F^*\kappa_{r(a)}\bigr)\otimes s
  =-2\pi i\,\bigl(F^*\kappa_{r(a)}-dh_a\bigr)\otimes s
  =-2\pi i\,\eta\otimes s,
\]
which is \eqref{eq:sectioneq}.
\end{itemize}
\emph{Torsor structure.}  We first make the group precise.  By
$H^{1}(F;U(1))$ we mean the first cohomology of the mapping cone of
$F^*\colon C^\bullet(N;U(1))\to C^\bullet(M;U(1))$ on singular cochains
with (discrete) $U(1)$-coefficients; by Lemma~\ref{lem:cech} (which holds
for any coefficient group) this agrees with the \v{C}ech cone
$\check H^{1}(F;U(1))$ for good covers, and with the singular cohomology
of the topological mapping cone of $F$ in degree one.  Geometrically,
$H^{1}(F;U(1))$ classifies pairs (flat Hermitian line bundle
$(L',\nabla')$ on $N$, $\nabla'$-parallel unit section $s'$ of
$F^*L'$): in the \v{C}ech model, $L'$ has locally constant unitary
transition functions $\lambda_{ij}$, flatness is $\delta$-cocyclehood, and
the parallel trivialization is locally constant unitary data
$\nu_a$ with $r^{\#}\lambda=\delta\nu$ --- exactly a degree-one
$D$-cocycle with $U(1)$-coefficients, with isomorphisms of pairs
corresponding to coboundaries.

The \emph{difference construction}: given two relative prequantizations
$(L_1,\nabla_1,s_1)$ and $(L_2,\nabla_2,s_2)$ of the same $\rvp$, set
\[
  (L',\nabla',s')
  :=\bigl(L_1\otimes L_2^{\vee},\ \nabla_1\otimes\id+\id\otimes\nabla_2^{\vee},\
  s_1\otimes s_2^{\vee}\bigr).
\]
Then $\curv(\nabla')=-2\pi i(\omega-\omega)=0$, so $(L',\nabla')$ is flat;
and $(F^*\nabla')s'=-2\pi i(\eta-\eta)\otimes s'=0$, so $s'$ is parallel.
Conversely, given a relative prequantization $(L,\nabla,s)$ and a flat pair
$(L',\nabla',s')$ --- that is, a flat Hermitian line bundle on $N$ with a
parallel unit section of its pullback, whose isomorphism classes form the group
$H^1(F;U(1))$ defined above --- the twist
$(L\otimes L',\ \nabla\otimes\id+\id\otimes\nabla',\ s\otimes s')$ has curvature
$-2\pi i\,\omega$ and satisfies \eqref{eq:sectioneq}, so it is again a relative
prequantization of $\rvp$. This defines an action of $H^1(F;U(1))$ on the set
$\mathcal{P}(\rvp)$ of isomorphism classes.

The action is \emph{transitive}: given $[(L_1,\nabla_1,s_1)]$ and
$[(L_2,\nabla_2,s_2)]$ in $\mathcal{P}(\rvp)$, the difference construction
produces a flat pair carrying the second to the first, since
$L_2\otimes(L_1\otimes L_2^\vee)\cong L_1$ compatibly with connections and
sections. It is \emph{free}: if twisting $(L,\nabla,s)$ by $(L',\nabla',s')$
yields an isomorphic relative prequantization, then an isomorphism
$L\otimes L'\xrightarrow{\sim}L$ intertwining the connections and matching
$s\otimes s'$ with $s$ trivializes $(L',\nabla')$ as a flat bundle and
identifies $s'$ with the constant section $1$, so $[(L',\nabla',s')]$ is the
identity of $H^1(F;U(1))$. Hence $\mathcal{P}(\rvp)$, when nonempty, is a torsor
over $H^{1}(F;U(1))$.

Here $H^1(F;U(1))$ is understood as the degree-one cohomology of the mapping
cone of $F^*$ with $U(1)$ coefficients, computed in the \v{C}ech model on the
good covers fixed above; concretely, it is the group of pairs consisting of a
flat line bundle on $N$ together with a parallel trivialization of its pullback
to $M$, modulo isomorphism, and the long exact sequence \eqref{eq:LES} in
$U(1)$ coefficients identifies it with the relative cohomology
$H^1(N,M;U(1))$ when $F$ is an embedding.
\end{proof}

\begin{corollary}\label{cor:prequantexamples}
\begin{enumerate}[label=\textup{(\alph*)},leftmargin=2.2em]
\item \textup{(Absolute case, $M=\emptyset$.)} Theorem~\ref{thm:prequantization}
reduces to the classical Weil--Kostant integrality criterion for prequantum line
bundles \cite{Kostant}.
\item \textup{(Trivialized case, $\omega=d\lambda$ globally with
$F^*\lambda-\eta$ closed integral.)} Relative prequantizations exist with
$L$ trivial; the section $s$ has winding data classified by
$[F^*\lambda-\eta]\in H^1(M;\R)/H^1(M;\Z)$-type invariants, recovering the
familiar quantization of ``momenta'' along $M$.
\end{enumerate}
\end{corollary}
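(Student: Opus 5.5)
Both parts reduce to Theorem~\ref{thm:prequantization} together with Theorem~\ref{thm:period-criterion}, so the plan is to check integrality in each case and then read off the torsor description.

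For (a), I take $M=\emptyset$. Then $\Om^k(F)=\Om^k(N)$, $\dF=d$, and $C_k(F)=C_k(N)$ with $\partial_F=\partial$. A relative $2$-cycle is just an integral $2$-cycle $S$ on $N$, with pairing $\int_S\omega$. Equivalently, $K=0$ in Theorem~\ref{thm:period-criterion}(iii), so $P_\rvp=P_\omega$. The section datum $s$ of Definition~\ref{def:relprequant} lives over the empty manifold and is vacuous, so a relative prequantization is exactly a Hermitian line bundle with connection of curvature $-2\pi i\,\omega$. Theorem~\ref{thm:prequantization} then says such a bundle exists if and only if $\omega$ has integral periods, which is the Weil--Kostant criterion. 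The torsor group becomes the mapping cone of $C^\bullet(N;U(1))\to 0$, namely $H^1(N;U(1))$, the group of flat line bundles. This recovers the classical classification as well.

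For (b), suppose $\omega=d\lambda$ and $\theta:=F^*\lambda-\eta$ is closed with integral periods. I plan an explicit construction: take $L=N\times\mathbb C$ with $\nabla=d-2\pi i\lambda$, whose curvature is $-2\pi i\,\omega$. Choosing a base point in each component of $M$ and using integrality of the periods of $\theta$, I define $h\colon M\to\R/\Z$ with $dh=\theta$ (a primitive mod $\Z$), and set $s=e^{2\pi i h}$. The computation from the construction step of the proof of Theorem~\ref{thm:prequantization} then gives
\[
(F^*\nabla)s=2\pi i\,(dh-F^*\lambda)\,s=-2\pi i\,\eta\otimes s .
\]
This establishes existence directly, without invoking Lemma~\ref{lem:correction}. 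As a consistency check, $\rvp=\dF(\lambda,0)+(0,\theta)$, and for $(S,T)\in Z_2(F)$ the pairing $\langle\rvp,(S,T)\rangle$ equals $\int_T\theta\in\Z$. This shows relative integrality is equivalent to integrality of $[\theta]$.

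The winding statement follows by varying $h$. Different choices of $h$ (equivalently of $s$, with $L$ and $\nabla$ fixed and trivial) differ by locally constant phases, and changing $\lambda$ by a closed form shifts $[\theta]$. Tracking these shows that the class of $[F^*\lambda-\eta]$ modulo integral classes governs the section's winding data. This is consistent with the torsor $H^1(F;U(1))$ via the long exact sequence~\eqref{eq:LES}.

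The main obstacle is that part (b) is stated loosely, as ``$H^1(M;\R)/H^1(M;\Z)$-type invariants''. I would make it precise only as a remark: the part of $H^1(F;U(1))$ coming from $H^0(M;U(1))$ and $H^1$ classes is what remains once $L$ is trivialized. I would not attempt a full identification.
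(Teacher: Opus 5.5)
Your proposal is correct and follows the paper's route. Part (a) is the collapse of the cone at $M=\varnothing$: the section datum is vacuous and relative cycles are cycles in $N$. In (b), your global $\R/\Z$-valued primitive $h$ of $\theta=F^*\lambda-\eta$ is exactly what the paper obtains by specializing the \v{C}ech construction to $\kappa_i=\lambda|_{U_i}$, $f_{ij}=0$ and patching the local $h_a$.

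Your side check contains two slips, though neither is needed for the statement:
\begin{enumerate}
\item The decomposition should read $\rvp=\dF(\lambda,0)+(0,-\theta)$. Your value $\int_T\theta$ for the pairing is nonetheless right.
\item Only cycles $T$ with $F_*[T]=0$ occur in relative $2$-cycles. So relative integrality means integrality of $\theta$ on $\ker\bigl(F_*\colon H_1(M;\Z)\to H_1(N;\Z)\bigr)$, not of $[\theta]$ on all of $H_1(M;\Z)$; the latter depends on the choice of $\lambda$. For fixed $\lambda$, only the implication you actually use holds.
\end{enumerate}
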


\begin{proof}
(a) For $M=\emptyset$ the section datum is vacuous and relative cycles are cycles
in $N$. (b) Take $\kappa_i=\lambda|_{U_i}$, $f_{ij}=0$ in the proof above; the
remaining freedom is exactly the choice of $h_a$ with
$dh_a=F^*\lambda-\eta$, patching to a global $s$ iff the periods of
$F^*\lambda-\eta$ are integral, which is relative integrality in this case.
\end{proof}

\subsection{Bohr--Sommerfeld Lagrangians}\label{subsec:bohr-sommerfeld}

The following specialization is short, but it is the sharpest available evidence
that Theorem~\ref{thm:prequantization} is a working statement rather than a
formal one: a foundational notion of geometric quantization is recovered by
substituting $\rvp=(\omega,0)$ into it.

\begin{theorem}[Bohr--Sommerfeld condition]\label{thm:bohr-sommerfeld}
Let $(N,\omega)$ be a symplectic manifold, $L\subseteq N$ a Lagrangian
submanifold, $M=L$, and $F\colon L\hookrightarrow N$ the inclusion. Then
$\rvp=(\omega,0)$ is a $\dF$-closed relative $2$-form, and:
\begin{enumerate}[label=\textup{(\roman*)},leftmargin=2.2em]
\item a relative prequantization of $(F,\rvp)$ is precisely a prequantum line
bundle $(L_{\mathrm{pq}},\nabla)$ on $N$ with $\curv(\nabla)=-2\pi i\,\omega$
together with a \emph{parallel} unit section of its restriction to $L$;
equivalently, a prequantum bundle on $N$ flatly trivialized over $L$;
\item such a datum exists if and only if
\[
  \int_S\omega\ \in\ \Z
  \qquad\text{for every } S\in C_2(N;\Z) \text{ with } \partial S\subseteq L,
\]
that is, if and only if $\omega$ is integral on $H_2(N,L;\Z)$;
\item when nonempty, the set of equivalence classes of such data is a torsor for
\[
  H^1(F;U(1))\ \cong\ H^1(N,L;U(1));
\]
\item concretely, by Theorem~\textup{\ref{thm:period-criterion}}, condition
\textup{(ii)} holds if and only if $\omega$ is integral on $N$ and the defect
homomorphism
\[
  \Theta_\rvp\colon
  \ker\bigl(H_1(L;\Z)\to H_1(N;\Z)\bigr)\longrightarrow \R/P_\omega,
  \qquad
  \Theta_\rvp[\gamma]=\int_S\omega \quad(\partial S=\gamma),
\]
is $\Z$-valued; and by Theorem~\textup{\ref{thm:level-group}} the set of levels
$k$ at which $L$ is Bohr--Sommerfeld is the cyclic group $k_0\Z$.
\end{enumerate}
Thus $L$ is a Bohr--Sommerfeld Lagrangian in the classical sense exactly when
$(F,\rvp)$ is relatively prequantizable.
\end{theorem}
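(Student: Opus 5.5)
The plan is to obtain every part by substituting $\rvp=(\omega,0)$ into results already proved, so that the only genuinely new work is bookkeeping. First, closedness: $\dF(\omega,0)=(d\omega,F^*\omega)$, and $d\omega=0$ because $\omega$ is symplectic, while $F^*\omega=0$ is exactly the isotropy half of the Lagrangian condition. Hence $\rvp$ is $\dF$-closed; of the Lagrangian hypothesis only isotropy is used.

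For (i), I will read Definition~\ref{def:relprequant} with $\eta=0$. The section equation \eqref{eq:sectioneq} becomes $(F^*\nabla)s=0$, so $s$ is a parallel unit section of $F^*L_{\mathrm{pq}}=L_{\mathrm{pq}}|_L$. Since $F^*\omega=0$, the pulled-back connection is flat on $L$, and a parallel unit section is the same thing as a flat trivialization. For (ii), Theorem~\ref{thm:prequantization} gives existence if and only if $\pair{\rvp}{(S,T)}=\int_S\omega\in\Z$ for every relative cycle $(S,T)$. By Lemma~\ref{lem:cycle-shape}, the first components of relative $2$-cycles are exactly the chains $S\in C_2(N;\Z)$ whose boundary is $F_*T$ for some $1$-cycle $T$ in $L$, that is, chains with $\partial S\subseteq L$. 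This is the displayed condition. Lemma~\ref{lem:relchains} identifies $H_2(F)$ with $H_2(N,L;\Z)$, since $L$ is a closed embedded submanifold (I will assume $L$ is properly embedded, or else work with the cone). Together with the descent in Proposition~\ref{prop:stokes}, this rewrites the condition as integrality of $\omega$ on $H_2(N,L;\Z)$.

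For (iii), the torsor statement is the last part of Theorem~\ref{thm:prequantization}. The identification $H^1(F;U(1))\cong H^1(N,L;U(1))$ for an embedding is the one recorded at the end of that proof, coming from the long exact sequences and the five lemma comparing the cone with singular relative cohomology. For (iv), I will apply Theorem~\ref{thm:period-criterion}(ii) with $\eta=0$. The defect becomes $\Theta_\rvp[\gamma]=\int_S\omega \bmod P_\omega$ with $\partial S=\gamma$, and the criterion reads $P_\omega\subseteq\Z$ together with $\operatorname{im}\Theta_\rvp\subseteq\Z/P_\omega$. The level statement is Proposition~\ref{prop:level-cyclic}, once one notes that $k\rvp=(k\omega,0)$. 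This result needs no finiteness hypothesis; Theorem~\ref{thm:level-group} is required only to compute $k_0$.

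I expect the main obstacle to be the precise meaning of ``$\partial S\subseteq L$'' in (ii). A chain in $N$ whose boundary is supported in $L$ is a priori not the image $F_*T$ of a chain in $L$. I will handle this by interpreting the condition singularly, with $\partial S$ lying in the image of $C_1(L)\to C_1(N)$. That image map is injective for an embedding, so $T$ is determined and is automatically a cycle, since $F_*\partial T=\partial\partial S=0$. A smaller secondary point is that the classical Bohr--Sommerfeld condition is phrased in terms of trivial holonomy around loops in $L$. Matching it to (ii) is the holonomy computation already given in the necessity part of Theorem~\ref{thm:prequantization}.
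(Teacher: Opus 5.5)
Your proposal is correct and follows the paper's proof essentially verbatim. Both substitute $\rvp=(\omega,0)$ into Definition~\ref{def:relprequant}, Theorem~\ref{thm:prequantization} and Theorem~\ref{thm:period-criterion}(ii), and identify the relative cycles and $H^1(F;U(1))$ of the inclusion with their classical relative counterparts. Your extra care over the meaning of $\partial S\subseteq L$ is a small improvement on the paper's terse argument, and so is your citation of Proposition~\ref{prop:level-cyclic} for the cyclicity of the level set: the paper cites Theorem~\ref{thm:level-group} instead, which carries a finite-generation hypothesis that is not needed here.
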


\begin{proof}
Since $L$ is Lagrangian, $F^*\omega=0$, so $\dF(\omega,0)=(d\omega,F^*\omega)=0$.
Unwinding Definition~\ref{def:relprequant} with $\eta=0$, the section equation
\eqref{eq:sectioneq} reads $(F^*\nabla)s=0$, which is (i). Parts (ii) and (iii)
are Theorem~\ref{thm:prequantization} applied to this pair, once one identifies
the relative cycles of the inclusion with $C_2(N,L;\Z)$ and $H^1(F;U(1))$ with
$H^1(N,L;U(1))$, both of which are immediate from
Definition~\ref{def:relchains} and the long exact sequence \eqref{eq:LES}. Part
(iv) is Theorem~\ref{thm:period-criterion}(ii) together with
Theorem~\ref{thm:level-group}, the defect homomorphism specializing as stated
because $\eta=0$ kills the source term in \eqref{eq:defect}.
\end{proof}

\begin{remark}\label{rem:bs-value}
No connectedness is assumed of $L$. If $L$ has several components, a relative
prequantization is a prequantum bundle on $N$ together with a parallel unit
section over \emph{each} component, and $H^1(N,L;U(1))$ accounts for the
independent choices; the integrality condition of (ii) is imposed on all of
$H_2(N,L;\Z)$ at once, so components interact through the chains joining them.
Theorem~\ref{thm:bohr-sommerfeld} recovers the integrality condition governing
Bohr--Sommerfeld fibres in geometric quantization, and adds two things the
classical formulation does not make explicit: the group acting simply
transitively on the flat trivializations is identified as $H^1(N,L;U(1))$, and
the admissible levels are identified as a cyclic group. Note also that the
hypothesis that $L$ be Lagrangian enters only through $F^*\omega=0$; the theorem
holds verbatim for any isotropic submanifold, and more generally for any smooth
map $F$ with $F^*\omega=0$.
\end{remark}

\subsection{Degree three: relative gerbes and quasi-Hamiltonian spaces}
\label{subsec:gerbes}

One degree up, the same mechanism governs gerbes. A \emph{relative gerbe} for
$F\colon M\to N$ \cite{Shahbazi} is a $U(1)$-gerbe $\mathcal G$ on $N$ together
with a trivialization (quasi-line bundle) of $F^*\mathcal G$ on $M$; equipped with
connective structures, its curvature data is exactly a closed relative $3$-form
$(\omega,\eta)$, and equivalence classes of relative gerbes are classified by the
relative integral cohomology $H^3(F;\Z)$. The analogue of
Theorem~\ref{thm:prequantization} holds: a closed relative $3$-form is the
curvature of a relative gerbe if and only if it is relatively integral
\cite{Shahbazi}; the necessity direction follows from our
Theorem~\ref{thm:WZ}(ii) via gerbe surface holonomy, whose bulk--boundary
formula is exactly the Wess--Zumino pairing of Example~\ref{ex:qhamWZ}.

For a quasi-Hamiltonian $G$-space $(M,\omega,\mu)$, the relative $2$-plectic form
$\rvp=(\eta,\omega)\in\Om^3(\mu)$ is thus prequantizable at level $k$ -- in the
sense of \cite{Krepski,MeinrenkenLectures}, i.e.\ $k[\rvp]$ admits an integral
lift in $H^3(\mu;\Z)$ -- if and only if all relative periods of $k\rvp$ are
integers. For $G$ simple and simply connected, $H^3(G;\Z)\cong\Z$ with generator
of curvature the suitably normalized $\eta$, and the basic gerbe of Meinrenken
\cite{MeinrenkenGerbe} realizes the generator geometrically; level-$k$
prequantizations of $(M,\omega,\mu)$ are trivializations of $\mu^*$ of its $k$-th
power with error $2$-form $k\omega$. The relative multisymplectic framework thus
identifies the prequantization theory of group-valued moment maps as the
degree-$3$ instance of relative integrality -- with the moment map theory of
\cite{RelHMM} living on the same cocycle $(\eta,\omega)$, one categorical level
down. See also \cite{LGX} for the groupoid perspective.

\begin{remark}[Position of Theorem~\ref{thm:prequantization} in the
literature]\label{rem:position}
To state the originality claim precisely: classification statements
equivalent in content to Theorem~\ref{thm:prequantization} can be extracted
from relative Deligne cohomology in the style of
\cite[Ch.~2]{Brylinski}, and in degree three the corresponding statement for
relative gerbes is due to Shahbazi \cite{Shahbazi}, with the
quasi-Hamiltonian instance developed by Krepski \cite{Krepski} and
Meinrenken \cite{MeinrenkenGerbe,MeinrenkenLectures}.  What we have not
found in the literature, and claim as new, is the specific
line-bundle-with-prescribed-trivialization formulation of
Definition~\ref{def:relprequant} on an arbitrary smooth map, the direct
holonomy proof of necessity over relative cycles, the complete \v{C}ech
mapping-cone proof of sufficiency given above, and the identification of
the torsor as $H^1(F;U(1))$ with the explicit difference construction.
Thus the theorem is best described as a new geometric formulation and a
new direct proof of a classification statement whose cohomological content
is consistent with the existing relative differential-cohomology
literature; readers should calibrate the originality claim accordingly.
\end{remark}
\section{Application C: Conserved quantities and boundary charges}
\label{sec:noether}

\begin{remark}[Bibliographical notes]\label{rem:biblio-C}
Noether theory for classical field theories in the multisymplectic formalism is
developed in \cite{GIMMSY}; conserved quantities on multisymplectic manifolds,
including the homological formulation of charges over cycles, are treated in
\cite{RWZ}, and the hydrodynamical example of \cite{MitiSpera} is a striking
application. The bulk--boundary splitting below is the relative refinement of
that picture.
\end{remark}

Let $(F,\rvp)$ be relative pre-$n$-plectic. We interpret a Hamiltonian
$F$-pair as a \emph{dynamics}: a relative Hamiltonian $\sigma\in
\Ham^{n-1}(F,\rvp)$ with $\dF\sigma=-\io_{v_\sigma}\rvp$ generates the pair of
flows of $v_\sigma=(v_{\sigma,N},v_{\sigma,M})$, which are intertwined by $F$
(being flows of $F$-related fields).

A word on scope before the theorem.  What follows is a conservation
statement \emph{along the flow of a Hamiltonian $F$-pair}: the
``dynamics'' is, by definition, the pair of flows generated by a relative
Hamiltonian through the Hamilton equation, exactly as in the absolute
multisymplectic theory.  We do not derive this flow from a variational
principle, and no space of solutions of a field equation is introduced;
accordingly, the theorem below should be read as the (relative)
multisymplectic Noether \emph{identity} --- the phase-space half of
Noether's theorem --- rather than as a statement about invariances of an
action functional.  Connecting the two, by identifying the relative
Hamiltonian dynamics with the boundary-value dynamics of the action
functionals of Section~\ref{sec:actions}, is a separate (and
worthwhile) task that we do not undertake here.

\begin{theorem}[Relative Noether theorem]\label{thm:noether}
Let $\sigma$ be a relative Hamiltonian with Hamiltonian $F$-pair $v_\sigma$, and
let $x$ be a Hamiltonian symmetry: $v_x$ an $F$-pair with
$\Lder_{v_x}\rvp=0$, admitting a Hamiltonian $f_1(x)$
\textup{(}$\dF f_1(x)=-\io_{v_x}\rvp$\textup{)}, and preserving the dynamics:
$\Lder_{v_x}\sigma=0$. Then
\begin{equation}\label{eq:noether}
  \Lder_{v_\sigma}\,f_1(x)
  \;=\;
  \dF\Bigl(\io_{v_\sigma}f_1(x)+\io_{v_x}\sigma\Bigr).
\end{equation}
In particular $f_1(x)$ is conserved along the dynamics up to an explicit
$\dF$-exact term. Symmetrically, $\sigma$ is conserved along $v_x$ up to
$\dF$-exact terms.
\end{theorem}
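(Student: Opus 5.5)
The plan is to derive \eqref{eq:noether} purely from the relative Cartan calculus recalled in Section~\ref{sec:background}: the magic formula $\Lder_v=\dF\io_v+\io_v\dF$ on $\Om^\bullet(F)$ \cite[Prop.~3.5]{RelHMM}, applied twice, together with the graded anticommutativity of relative contractions. No integration theory is needed. In the end this should turn out to be a three-line computation.

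\textbf{Step 1: the Lie derivative of the symmetry generator.} Apply the magic formula to $f_1(x)$ along $v_\sigma$, and then use the Hamiltonian equation $\dF f_1(x)=-\io_{v_x}\rvp$:
\[
  \Lder_{v_\sigma}f_1(x)=\dF\io_{v_\sigma}f_1(x)+\io_{v_\sigma}\dF f_1(x)
  =\dF\io_{v_\sigma}f_1(x)-\io_{v_\sigma}\io_{v_x}\rvp .
\]

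\textbf{Step 2: invariance of the dynamics.} Use the hypothesis $\Lder_{v_x}\sigma=0$, again through the magic formula, together with $\dF\sigma=-\io_{v_\sigma}\rvp$:
\[
  0=\Lder_{v_x}\sigma=\dF\io_{v_x}\sigma+\io_{v_x}\dF\sigma
  =\dF\io_{v_x}\sigma-\io_{v_x}\io_{v_\sigma}\rvp .
\]
This gives $\io_{v_x}\io_{v_\sigma}\rvp=\dF\io_{v_x}\sigma$.

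\textbf{Step 3: anticommutation.} I need $\io_{v_\sigma}\io_{v_x}=-\io_{v_x}\io_{v_\sigma}$ on $\Om^\bullet(F)$. This is the one point to check against the sign convention \eqref{eq:reliota}. We have $\io_u\io_v(\alpha,\beta)=(\io_{u_N}\io_{v_N}\alpha,\,\io_{u_M}\io_{v_M}\beta)$, because the two minus signs in the source component cancel. Hence the relative double contraction is componentwise the absolute one, and it is antisymmetric in $u,v$. Substituting into Step~1 gives
\[
  \Lder_{v_\sigma}f_1(x)=\dF\io_{v_\sigma}f_1(x)+\io_{v_x}\io_{v_\sigma}\rvp
  =\dF\bigl(\io_{v_\sigma}f_1(x)+\io_{v_x}\sigma\bigr),
\]
which is \eqref{eq:noether}. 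In the edge case $n=1$ the forms $f_1(x)$ and $\sigma$ lie in $\Om^0(F)$. Contractions then vanish and the identity still holds, reading $\Lder_{v_\sigma}f_1(x)=0$, consistent with $\io_{v_x}\io_{v_\sigma}\rvp=0$ in Step~2.

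\textbf{The symmetric statement.} I would obtain it by exchanging the roles of $(\sigma,v_\sigma)$ and $(f_1(x),v_x)$ in Step~1. This yields $\Lder_{v_x}\sigma=\dF\io_{v_x}\sigma-\io_{v_x}\io_{v_\sigma}\rvp$, so $\sigma$ is conserved along $v_x$ modulo a $\dF$-exact term precisely when $\io_{v_x}\io_{v_\sigma}\rvp$ is $\dF$-exact. Under the standing hypothesis this term is exact by Step~2, and indeed $\Lder_{v_x}\sigma$ vanishes outright.

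\textbf{Main obstacle.} The only genuinely delicate point is the sign bookkeeping in Step~3 with the convention $\io_v(\alpha,\beta)=(\io_{v_N}\alpha,-\io_{v_M}\beta)$. I also note that the hypothesis $\Lder_{v_x}\rvp=0$ is not used in the identity itself. Its role is only to make $f_1(x)$ a relative Hamiltonian in $\Lie(F,\rvp)$.
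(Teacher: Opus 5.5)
Your proposal is correct and follows the paper's proof essentially verbatim: the magic formula for $\Lder_{v_\sigma}f_1(x)$, then the magic formula applied to $\Lder_{v_x}\sigma=0$ together with the Hamilton equation for $\sigma$, then antisymmetry of the double contraction. The paper simply asserts that antisymmetry, whereas you check it against the sign convention \eqref{eq:reliota}. One small correction to your closing aside: the hypothesis $\Lder_{v_x}\rvp=0$ is actually redundant, since $\Lder_{v_x}\rvp=\dF\io_{v_x}\rvp=-\dF\dF f_1(x)=0$ by the magic formula and $\dF\rvp=0$, and it is not what makes $f_1(x)$ a Hamiltonian.
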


\begin{proof}
By the relative magic formula and the Hamilton equations for $f_1(x)$,
\[
  \Lder_{v_\sigma}f_1(x)
  =\dF\,\io_{v_\sigma}f_1(x)+\io_{v_\sigma}\,\dF f_1(x)
  =\dF\,\io_{v_\sigma}f_1(x)-\io_{v_\sigma}\io_{v_x}\rvp .
\]
Using anticommutativity of contractions, the Hamilton equation for $\sigma$, and
the magic formula once more,
\[
  -\io_{v_\sigma}\io_{v_x}\rvp
  =\io_{v_x}\io_{v_\sigma}\rvp
  =-\io_{v_x}\dF\sigma
  =-\Lder_{v_x}\sigma+\dF\,\io_{v_x}\sigma
  =\dF\,\io_{v_x}\sigma ,
\]
by the hypothesis $\Lder_{v_x}\sigma=0$. Adding the two displays gives
\eqref{eq:noether}. The symmetric statement follows by exchanging the roles of
$\sigma$ and $f_1(x)$, both being Hamiltonian.
\end{proof}

\begin{corollary}[Conserved charges on relative cycles]\label{cor:charges}
In the situation of Theorem~\ref{thm:noether}, let
$Z=(S,T)\in Z_{n-1}(F)$ be a relative $(n{-}1)$-cycle, and let
$\phi_t=(\phi_t^N,\phi_t^M)$ denote the pair of flows of $v_\sigma$
\textup{(}defined for $t$ in an interval on which they exist along $Z$\textup{)}.
Then the \emph{charge}
\begin{equation}\label{eq:charge}
  Q_x(t)\;:=\;\pair{f_1(x)}{(\phi_t)_*Z}
  \;=\;\int_{\phi_{t*}^{N}S}f_1^N(x)\;-\;\int_{\phi_{t*}^{M}T}f_1^M(x)
\end{equation}
is independent of $t$.
\end{corollary}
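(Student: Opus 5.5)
The plan is to differentiate $Q_x(t)$ in $t$, convert the derivative into a pairing of a $\dF$-exact form against a relative cycle, and conclude with the relative Stokes theorem. First, the pair of flows $\phi_t=(\phi_t^N,\phi_t^M)$ satisfies $F\circ\phi_t^M=\phi_t^N\circ F$, because $v_{\sigma,M}$ and $v_{\sigma,N}$ are $F$-related. Hence $\Phi_t:=(\phi_t^M,\phi_t^N)$ is a morphism of arrows $F\to F$ in the sense of Definition~\ref{def:arrowmap}, at least on the open sets where the flows are defined near $Z$. By Theorem~\ref{thm:functoriality}(ii) the chain $(\phi_t)_*Z$ is again a relative $(n{-}1)$-cycle. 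By the adjointness \eqref{eq:adjointness} we can then write
\[
  Q_x(t)=\pair{\Phi_t^*f_1(x)}{Z},
  \qquad \Phi_t^*(\alpha,\beta)=\bigl((\phi_t^N)^*\alpha,\,(\phi_t^M)^*\beta\bigr).
\]

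Next, we differentiate under the integral sign; the chains are compact, so this is legitimate. Componentwise, $\frac{d}{dt}(\phi_t^N)^*\alpha=(\phi_t^N)^*\Lder_{v_{\sigma,N}}\alpha$, and similarly on $M$. Comparing with \eqref{eq:reliota}, this says $\frac{d}{dt}\Phi_t^*=\Phi_t^*\Lder_{v_\sigma}$. We therefore get
\[
  Q_x'(t)=\pair{\Phi_t^*\Lder_{v_\sigma}f_1(x)}{Z}
  =\pair{\Lder_{v_\sigma}f_1(x)}{(\phi_t)_*Z}.
\]
The sign in the second slot of $\Lder_v$ is $+$, unlike that of $\io_v$. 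This matches the pullback derivative on each factor, so no sign correction arises.

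Now we apply the Noether identity \eqref{eq:noether}. It writes $\Lder_{v_\sigma}f_1(x)=\dF\theta$ with $\theta=\io_{v_\sigma}f_1(x)+\io_{v_x}\sigma\in\Om^{n-2}(F)$. Proposition~\ref{prop:stokes} then gives
\[
  Q_x'(t)=\pair{\theta}{\partial_F(\phi_t)_*Z}=0,
\]
since $(\phi_t)_*Z$ is a relative cycle. So $Q_x$ is constant on its interval of definition.

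The main obstacle is the bookkeeping when the flows are only local. Functoriality and the change-of-variables formula must be applied to $\phi_t$ as a map defined on a neighbourhood of the supports of $S$ and $T$, and the morphism-of-arrows identity $F\circ\phi_t^M=\phi_t^N\circ F$ must hold on the support of $T$. We handle this by restricting to a time interval and to open neighbourhoods on which both flows exist. On these neighbourhoods the intertwining holds by uniqueness of integral curves of $F$-related vector fields.

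A secondary point is the derivative formula $\frac{d}{dt}\phi_t^*=\phi_t^*\Lder$. This holds for forms, and integrating over a fixed smooth chain commutes with $d/dt$ by smooth dependence on $t$. In degree $n-1=0$ we interpret chains as $0$-chains and pairing as evaluation, and the argument goes through unchanged.
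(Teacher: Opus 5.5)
Your proposal is correct and follows essentially the same route as the paper. The intertwining $F\circ\phi_t^M=\phi_t^N\circ F$ makes $(\phi_t)_*Z$ a relative cycle, differentiation gives $\pair{\Lder_{v_\sigma}f_1(x)}{(\phi_t)_*Z}$, and the Noether identity \eqref{eq:noether} with relative Stokes (Proposition~\ref{prop:stokes}) shows this vanishes; your step through $\Phi_t^*$ and the adjointness \eqref{eq:adjointness}, and your bookkeeping for local flows, just make explicit what the paper compresses into ``differentiating under the integral sign.''
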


\begin{proof}
Since $\phi_t^N$ and $\phi_t^M$ are the flows of $F$-related fields,
$F\circ\phi_t^M=\phi_t^N\circ F$, so
$(\phi_t)_*Z:=(\phi^N_{t*}S,\phi^M_{t*}T)$ is again a relative cycle:
$\partial\phi^N_{t*}S=\phi^N_{t*}F_*T=F_*\phi^M_{t*}T$. Differentiating under the
integral sign,
\begin{align*}
  \frac{d}{dt}Q_x(t)
  &=\pair{\Lder_{v_\sigma}f_1(x)}{(\phi_t)_*Z}
  =\pair{\dF\bigl(\io_{v_\sigma}f_1(x)+\io_{v_x}\sigma\bigr)}{(\phi_t)_*Z}\\
  &=\pair{\io_{v_\sigma}f_1(x)+\io_{v_x}\sigma}{\partial_F(\phi_t)_*Z}=0,
\end{align*}
by Theorem~\ref{thm:noether} and the relative Stokes theorem
(Proposition~\ref{prop:stokes}).
\end{proof}

\begin{remark}[Class versus number]\label{rem:class-vs-charge}
Two distinct objects are in play and should not be conflated. The Noether
theorem produces a \emph{conserved relative cohomology class}: by
Theorem~\ref{thm:noether} the relative $(n{-}1)$-form $f_1(x)$ is $\dF$-closed
along the flow, so it determines a class in $H^{n-1}(\Om(F))$ that does not
depend on $t$. Formula \eqref{eq:charge} produces a \emph{number}, obtained by
pairing that class with one chosen relative cycle $Z$. The number depends on
$Z$, and only on its class in $H_{n-1}(F)$; different cycles give different
conserved quantities, and a cycle that bounds gives zero. The class is the
invariant, the charge is a measurement of it.
\end{remark}

\begin{remark}[Bulk--boundary structure of charges]\label{rem:bulkboundary}
Formula \eqref{eq:charge} exhibits the conserved charge as a bulk integral over an
$(n{-}1)$-chain $S$ in $N$ \emph{corrected by a boundary integral} over the
$(n{-}2)$-chain $T$ in $M$ whose image under $F$ bounds $S$. Geometrically the
relative cycle is a bulk region together with the boundary data that closes it,
and the conserved quantity is the flux through the bulk balanced against the
efflux through the boundary:
\begin{equation}\label{eq:bulkboundary-diagram}
\begin{tikzcd}[column sep=2.2em, row sep=1.0em,
  /tikz/every label/.append style={font=\footnotesize}]
Q_x
& =\ \displaystyle\int_{S}f_1^N(x)
  & -\ \displaystyle\int_{T}f_1^M(x)\\
& \substack{\text{bulk charge}\\\text{(on }N\text{)}}
  \arrow[u, phantom]
& \substack{\text{boundary charge}\\\text{(on }M\text{)}}
  \arrow[u, phantom]
\end{tikzcd}
\end{equation}
with $\partial S=F_*T$ the relative cycle condition. Neither term is
separately conserved in general; conservation holds exactly for the relative
combination. For quasi-Hamiltonian $G$-spaces the canonical moment map has
$f_1^M=0$ \textup{(}\eqref{eq:qhamf}\textup{)}, so all charges are carried by the
group; for boundary inclusions $\partial W\hookrightarrow W$
(Remark~\ref{rem:lefschetz}) the mechanism reproduces the familiar splitting of
physical charges into bulk and boundary contributions. Higher components of a relative homotopy moment map also give
conserved charges, but --- as the component equations contain bracket
terms --- this requires hypotheses and proof; see
Proposition~\ref{prop:highercharges} below.
\end{remark}

\begin{proposition}[Higher charges for commuting symmetries]
\label{prop:highercharges}
Let $\rvp$ be \emph{strongly} relative $n$-plectic in the sense of
Definition~\ref{def:weak-strong}\textup{(ii)} below --- equivalently, by
Theorem~\ref{thm:weak-vs-strong}\textup{(4)}, relative $n$-plectic with $F$ of
dense image --- let $(f_k)$ be a relative homotopy
moment map for a $\g$-action preserving $\rvp$, and let $\sigma$ be a
relative Hamiltonian with Hamiltonian pair $w=v_\sigma$.  Assume that
$\Lder_{v_x}\sigma=0$ for every $x\in\g$.  If
$x_1,\dots,x_k\in\g$ commute pairwise, then
\begin{equation}\label{eq:higher-noether}
  \Lder_w f_k(x_1,\dots,x_k)=\dF B_k(x_1,\dots,x_k)
\end{equation}
for an explicitly determined relative form $B_k$ obtained from
$\io_w f_k(x_1,\dots,x_k)$ and the contraction of $\sigma$ by the
fundamental fields $v_{x_1},\dots,v_{x_k}$.  Consequently, for every relative
$(n-k)$-cycle $Z$ and every time for which the flow $\phi_t$ of $w$ is defined
along $Z$, the quantity
\[
  Q_k(t):=\pair{f_k(x_1,\dots,x_k)}{(\phi_t)_*Z}
\]
is independent of $t$.
\end{proposition}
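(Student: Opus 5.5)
The plan is to imitate the proof of Theorem~\ref{thm:noether}, replacing the Hamilton equation for $f_1(x)$ by the $k$-th component equation \eqref{eq:componenteqs}. The commuting hypothesis is what removes the bracket terms. Since the $x_i$ commute pairwise, every term $f_{k-1}([x_i,x_j],\dots)$ on the left of \eqref{eq:componenteqs} vanishes. The $k$-th component equation therefore reduces to
\[
  \dF f_k(x_1,\dots,x_k)=-\vs(k)\,\io(v_{x_1}\wedge\dots\wedge v_{x_k})\rvp .
\]
By the convention $[v_x,v_y]=v_{[x,y]}$ we also have $[v_{x_i},v_{x_j}]=0$. I will use the strong nondegeneracy hypothesis only to guarantee that $w=v_\sigma$ is an honest $F$-pair, as needed in Remark~\ref{rem:fix-higher}. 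This makes $\io_w$ and $\Lder_w$ defined and subject to the full relative Cartan calculus of \cite[Prop.~3.5]{RelHMM}.

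\emph{Step 1 (magic formula).} Apply $\Lder_w=\dF\io_w+\io_w\dF$ to $f_k:=f_k(x_1,\dots,x_k)$ and use the reduced component equation:
\[
  \Lder_w f_k=\dF\,\io_w f_k-\vs(k)\,\io_w\io(v_{x_1}\wedge\dots\wedge v_{x_k})\rvp .
\]

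\emph{Step 2 (move $\io_w$ onto $\rvp$).} Relative contractions anticommute. Hence $\io_w\io_{v_{x_k}}\cdots\io_{v_{x_1}}\rvp=(-1)^k\io(v_{x_1}\wedge\dots\wedge v_{x_k})\io_w\rvp$. The Hamilton equation $\io_w\rvp=-\dF\sigma$ then turns the last term of Step~1 into $\pm\vs(k)\,\io(v_{x_1}\wedge\dots\wedge v_{x_k})\dF\sigma$.

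\emph{Step 3 (commute $\dF$ outward).} I will prove by induction on $j$ that
\[
  \io(v_{x_1}\wedge\dots\wedge v_{x_j})\,\dF\sigma=(-1)^j\,\dF\,\io(v_{x_1}\wedge\dots\wedge v_{x_j})\sigma .
\]
The inductive step writes $\io_{v}\dF=\Lder_v-\dF\io_v$ for the innermost contraction in the form $\io_{v}\dF\,\io(\cdots)\sigma$. The resulting Lie-derivative term vanishes for two reasons. First, $[\Lder_{v_{x_i}},\io_{v_{x_j}}]=\io_{[v_{x_i},v_{x_j}]}=0$ lets $\Lder_{v_{x_i}}$ pass through all the contractions. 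Second, $\Lder_{v_{x_i}}\sigma=0$ by hypothesis. Combining Steps~1--3 gives \eqref{eq:higher-noether} with
\[
  B_k=\io_w f_k(x_1,\dots,x_k)+\epsilon_k\,\io(v_{x_1}\wedge\dots\wedge v_{x_k})\sigma,
\]
for an explicit sign $\epsilon_k=\pm\vs(k)$ fixed by the bookkeeping. As a consistency check, at $k=1$ this must reproduce \eqref{eq:noether}, which pins the sign.

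\emph{Step 4 (conservation).} Argue exactly as in Corollary~\ref{cor:charges}. The flows of $w$ are $F$-related, so $(\phi_t)_*Z$ remains a relative $(n-k)$-cycle. Differentiating gives $\frac{d}{dt}Q_k=\pair{\Lder_w f_k}{(\phi_t)_*Z}=\pair{\dF B_k}{(\phi_t)_*Z}$. This vanishes by Proposition~\ref{prop:stokes}.

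The main obstacle is the sign bookkeeping in Steps~2--3: the multicontraction ordering convention, the sign of the relative contraction on the $M$-component, and $\vs(k)$ all interact. I will control it by checking the $k=1$ case and the edge $M=\varnothing$ against \cite{CFRZ}. The sign does not affect the conclusion, only the formula for $B_k$.
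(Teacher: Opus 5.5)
Your argument is correct, but it takes a different route from the paper's. The paper first uses strong nondegeneracy to obtain $[v_x,w]=0$, from $\io_{[v_x,w]}\rvp=\Lder_{v_x}\io_w\rvp-\io_w\Lder_{v_x}\rvp=-\dF\Lder_{v_x}\sigma=0$. It then only sketches the Cartan-formula computation, leaving implicit both why $\io(v_{x_1}\wedge\dots\wedge v_{x_k})\dF\sigma$ is exact and the sign of $B_k$.

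You never need $[v_x,w]=0$. Your Step~3 pulls $\dF$ outward using only $\Lder_{v_{x_i}}\sigma=0$ and $[v_{x_i},v_{x_j}]=v_{[x_i,x_j]}=0$, and this is exactly the step the paper's sketch leaves unstated. Your own bookkeeping already fixes the sign: the factors $-\vs(k)$ from Step~1, $(-1)^{k+1}$ from Step~2 and $(-1)^k$ from Step~3 multiply to $\vs(k)$. Hence
\[
  B_k=\io_w f_k(x_1,\dots,x_k)+\vs(k)\,\io(v_{x_1}\wedge\dots\wedge v_{x_k})\sigma ,
\]
which for $k=1$ (where $\vs(1)=1$) is exactly \eqref{eq:noether}, with no separate check needed.

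One correction to your framing: strong nondegeneracy is not what makes $w$ an $F$-pair. A Hamiltonian pair is an $F$-pair by definition, and strong nondegeneracy only makes it unique. Consequently your proof uses no nondegeneracy at all. It rests only on \eqref{eq:componenteqs}, the Hamilton equation and the relative Cartan calculus, so it proves the proposition for every $\dF$-closed $\rvp$. That sharpens Remark~\ref{rem:fix-higher}, which claims the conservation statement only under weak nondegeneracy.

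What the paper's route adds is the geometric fact $[v_x,w]=0$: the symmetries commute with the dynamics as honest $F$-pairs. That fact does need the strong hypothesis, but it is not needed for \eqref{eq:higher-noether} or for the conservation of $Q_k$.
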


\begin{proof}
The Hamilton equation gives $\io_w\rvp=-\dF\sigma$, and hence
$\Lder_w\rvp=0$.  Moreover,
\[
 \io_{[v_x,w]}\rvp
 =\Lder_{v_x}\io_w\rvp-\io_w\Lder_{v_x}\rvp
 =-\dF\Lder_{v_x}\sigma=0,
\]
so \emph{strong} nondegeneracy implies $[v_x,w]=0$ for every $x\in\g$; see
Remark~\ref{rem:fix-higher} for what survives under the weak hypothesis.
For pairwise commuting $x_1,\dots,x_k$, all Lie-bracket terms in the
$k$th relative homotopy-moment-map equation vanish.  Applying the relative
Cartan formula to $f_k(x_1,\dots,x_k)$, substituting the component equation,
and then using $\io_w\rvp=-\dF\sigma$ expresses
$\Lder_w f_k(x_1,\dots,x_k)$ as a relative differential.  The precise sign in
$B_k$ is the one dictated by the sign convention $\vs(k)$ in the component
equations; its value is immaterial for the conservation statement.
Pairing \eqref{eq:higher-noether} with the transported relative cycle and
using relative Stokes gives
\[
 \frac{d}{dt}Q_k(t)
 =\pair{\dF B_k(x_1,\dots,x_k)}{(\phi_t)_*Z}
 =\pair{B_k(x_1,\dots,x_k)}{\partial_F(\phi_t)_*Z}=0.
\]
\end{proof}

\begin{remark}
For noncommuting arguments, the component equations contain additional terms
involving $f_{k-1}([x_i,x_j],\ldots)$.  These terms generally obstruct the
conservation of an individual higher component.  A useful general statement
therefore requires either a Chevalley--Eilenberg formulation of the complete
tower of charges or additional hypotheses forcing the bracket contributions
to vanish.  We do not impose such a formulation here.
\end{remark}

\begin{example}[A charge with nonzero bulk and boundary components]
\label{ex:bothcharges}
The quasi-Hamiltonian examples have $f_1^M=0$, so they do not display the
bulk--boundary splitting nontrivially; here is an elementary example that
does, in degree $n=2$.  Let $N=S^1\times\R^2\times\R$ with coordinates
$(\theta,x,y,s)$ and $M=S^1\times\R^2$, with
$F\colon M\hookrightarrow N$ the slice $\{s=0\}$.  Take
\[
  \omega=d\theta\wedge dx\wedge dy\in\Om^3(N),
  \qquad
  \eta=-\tfrac12\,d\theta\wedge(x\,dy-y\,dx)+dx\wedge dy\in\Om^2(M):
\]
$d\omega=0$ and $F^*\omega=d\eta$ (the second summand of $\eta$ is
closed), so $\rvp=(\omega,\eta)$ is a closed relative $3$-form,
invariant under the $SO(2)$-action rotating $(x,y)$ on both source and
target, with fundamental pair $v=(x\partial_y-y\partial_x)$ on each side.
The moment component $f_1\in\Om^1(F)$ solves
$\dF f_1=-\io_v\rvp$: on the target,
$\io_v\omega=-\tfrac12\,d(r^2d\theta)$ gives
$f_1^N=\tfrac12 r^2\,d\theta$; on the source, the constraint
$F^*f_1^N-df_1^M=\io_v\eta$ with
$\io_v\eta=\tfrac12 r^2\,d\theta-\tfrac12\,d(r^2)$ gives
$df_1^M=\tfrac12\,d(r^2)$, i.e.
\[
  f_1=\Bigl(\tfrac12 r^2\,d\theta,\ \tfrac12 r^2\Bigr),
\]
with \emph{both} components nonzero.  For a relative $1$-cycle
$(S,T)$ --- a $1$-chain $S$ in $N$ with $\partial S=F_*T$, $T$ a
$0$-cycle in $M$ --- the charge of Corollary~\ref{cor:charges} is
\[
  Q=\int_S\tfrac12 r^2\,d\theta\;-\;\sum_{p\in T}\pm\tfrac12 r(p)^2 :
\]
for example, for $S$ a path in $N$ joining
$F(p)$ to $F(q)$ and $T=\{q\}-\{p\}$, the charge is the bulk angular
integral corrected by the difference of boundary energies, and neither
term is separately conserved along the flow of an invariant relative
Hamiltonian --- only their difference is.  This is the bulk--boundary
mechanism of Remark~\ref{rem:bulkboundary} in its simplest nontrivial
instance.
\end{example}

\section{Application D: Rigidity of relative comoment maps}
\label{sec:rigidity}

We now specialize to $n=1$: $\rvp\in\Om^2(F)$ is a closed relative
$2$-form --- the relative \emph{presymplectic} case; we say relative
\emph{symplectic} only when relative nondegeneracy is imposed, and we flag
below the one statement \textup{(}Theorem~\ref{thm:rigidity}(iv)\textup{)}
where the distinction matters. Here
$\Om^0(F)=C^\infty(N)$,
\[
  \Ham^0(F,\rvp)=\bigl\{\,\sigma\in C^\infty(N)\ :\ \exists\,v_\sigma\in\XF,\
  \dF\sigma=-\io_{v_\sigma}\rvp\,\bigr\},
\]
and the binary bracket $\{\sigma_1,\sigma_2\}:=\io(v_{\sigma_1}\wedge
v_{\sigma_2})\rvp$ makes $\Ham^0(F,\rvp)$ a Lie algebra
\textup{(}the case $n=1$ of \cite[\S4]{DjThesis}; skew-symmetry is clear and Jacobi
follows from the master identity \eqref{eq:master}\textup{)}. Recall from
\eqref{eq:H0} that a $\dF$-closed function vanishes when $N$ is connected and
$M\neq\emptyset$; this hypothesis is in force throughout this section.

Given an action of $G$ on $F$ preserving $\rvp$, a \emph{comoment map} is a
linear map $f\colon\g\to C^\infty(N)$ with
\begin{equation}\label{eq:comoment}
  \dF f(x)=-\io_{v_x}\rvp
  \qquad\text{for all }x\in\g .
\end{equation}

\begin{theorem}[Rigidity]\label{thm:rigidity}
Let $N$ be connected, $M\neq\emptyset$, and suppose a comoment map $f$ exists.
Then:
\begin{enumerate}[label=\textup{(\roman*)},leftmargin=2.2em]
\item \textup{(Uniqueness)} $f$ is the unique comoment map for the action.
\item \textup{(Automatic strictness)} $f$ is a homomorphism of Lie algebras:
$f([x,y])=\{f(x),f(y)\}$ for all $x,y\in\g$. In particular the Kostant--Souriau
cocycle $\tau(x,y)=f([x,y])-\{f(x),f(y)\}$ vanishes identically, and $f$ is a
strict relative homotopy moment map.
\item \textup{(Automatic equivariance)} $f$ is infinitesimally equivariant:
$\Lder_{v_x}f(y)=f([x,y])$ for all $x,y$; if moreover $G$ is connected, $f$ is
$G$-equivariant.
\item \textup{(No constants)} If $\rvp$ is \emph{strongly} nondegenerate as a
relative $1$-plectic form \textup{(}Definition~\ref{def:weak-strong}(ii)
below; see Remark~\ref{rem:fix-rigidity}\textup{)}, each
$\sigma\in\Ham^0(F,\rvp)$ admits a
\emph{unique} Hamiltonian $F$-pair $v_\sigma$, and the resulting map
$\Ham^0(F,\rvp)\to\XF$, $\sigma\mapsto v_\sigma$, is well defined and
injective.  Without nondegeneracy the assignment
$\sigma\mapsto v_\sigma$ involves a choice and is not canonical; the
correct statements are: \textup{(a)} if $\sigma$ admits the \emph{zero}
pair, then $\dF\sigma=0$ and hence $\sigma=0$; and \textup{(b)} two
Hamiltonians admitting a common Hamiltonian pair differ by a
$\dF$-closed function and hence are equal.  In every case the central
term of the classical Kostant--Souriau extension
$0\to\R\to C^\infty\to\mathfrak{ham}\to0$ is absent in the relative
theory.
\end{enumerate}
\end{theorem}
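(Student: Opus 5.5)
The engine throughout is \eqref{eq:H0}: with $N$ connected and $M\neq\emptyset$, any $h\in\Om^0(F)=C^\infty(N)$ with $\dF h=0$ vanishes. The plan is to show, in each part, that the relevant discrepancy is a $\dF$-closed function.

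For (i), if $f,f'$ both satisfy \eqref{eq:comoment}, then $\dF(f(x)-f'(x))=0$, so $f=f'$. For (iii), fix $x,y$. By the magic formula, $\Lder_{v_x}\rvp=0$, \eqref{eq:comoment} and $[\Lder_{v_x},\io_{v_y}]=\io_{[v_x,v_y]}$, we get $\dF\Lder_{v_x}f(y)=\Lder_{v_x}\dF f(y)=-\Lder_{v_x}\io_{v_y}\rvp=-\io_{[v_x,v_y]}\rvp=-\io_{v_{[x,y]}}\rvp=\dF f([x,y])$. Here $\Lder$ commutes with $\dF$ because it is $\dF\io+\io\dF$. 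Hence $\Lder_{v_x}f(y)-f([x,y])$ is $\dF$-closed and so vanishes. For connected $G$, equivariance then follows by the standard integration argument: $g\mapsto g\cdot f(\Ad_{g^{-1}}x)$ has vanishing derivative along one-parameter subgroups, hence is constant on the connected group. Alternatively, the map $x\mapsto g\cdot f(\Ad_{g^{-1}}x)$ is again a comoment map, because the action preserves $\rvp$ and transports fundamental fields, and (i) then gives equivariance directly, with no connectedness needed for this step.

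For (ii), I would compute $\dF\{f(x),f(y)\}=\dF\io(v_x\wedge v_y)\rvp$ using the master identity \eqref{eq:master} with $m=2$ (valid since $\rvp$ is closed and invariant). This gives $\dF\io(v_x\wedge v_y)\rvp=-\io_{[v_x,v_y]}\rvp=-\io_{v_{[x,y]}}\rvp=\dF f([x,y])$. So $\tau(x,y)$ is $\dF$-closed and hence zero. The main obstacle is bookkeeping the sign: I must check that the convention $\io(v_1\wedge v_2)=\io_{v_2}\io_{v_1}$, the sign $(-1)^m(-1)^{1+2}$ in \eqref{eq:master}, and the sign of the bracket $\{\cdot,\cdot\}$ combine to $+\dF f([x,y])$ rather than $-\dF f([x,y])$. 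As a cross-check, I would redo the computation directly: $\io_{v_y}\io_{v_x}\rvp=-\io_{v_y}\dF f(x)=-\Lder_{v_y}f(x)+\dF\io_{v_y}f(x)$, where $\io_{v_y}f(x)=0$ in degree $0$, and then apply (iii) to get $-f([y,x])=f([x,y])$. This route is actually cleaner and avoids \eqref{eq:master}, so I would present it. Strictness as a relative homotopy moment map is then the $k=2$ component equation \eqref{eq:componenteqs} with $n=1$, which is exactly this identity.

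For (iv), if $v_\sigma,v'_\sigma$ are two Hamiltonian pairs, then $\io_{v_\sigma-v'_\sigma}\rvp=0$. Strong nondegeneracy (contraction injective on $F$-pairs) forces $v_\sigma=v'_\sigma$, which gives well-definedness. Injectivity follows from (b). Statement (a) holds because the zero pair gives $\dF\sigma=0$, whence $\sigma=0$ by \eqref{eq:H0}. Statement (b) holds because $\dF(\sigma-\sigma')=-\io_v\rvp+\io_v\rvp=0$. The final sentence then follows: the would-be central kernel $\R$ of constants consists of $\dF$-closed functions, which are zero.
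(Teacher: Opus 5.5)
Your proposal is correct. For (i), (iii) and (iv) it matches the paper's proof. Your second argument for group-level equivariance is exactly the paper's: the transported map $x\mapsto g\cdot f(\Ad_{g^{-1}}x)$ is again a comoment map, so (i) forces equality, and no connectedness is needed. Your integration argument along one-parameter subgroups is also fine, but it does use connectedness of $G$.

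The genuine difference is in (ii).

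\emph{The paper's route.} It proves (ii) independently of (iii). It applies the master identity \eqref{eq:master} with $m=2$ to get $\dF\{f(x),f(y)\}=-\io_{v_{[x,y]}}\rvp=\dF f([x,y])$. It then invokes \eqref{eq:H0} a second time, on $\tau$.

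\emph{Your route.} You first establish the exact identity $\{f(x),f(y)\}=\io_{v_y}\io_{v_x}\rvp=-\io_{v_y}\dF f(x)=-\Lder_{v_y}f(x)$. This holds for \emph{any} comoment map, since $\io_{v_y}f(x)\in\Om^{-1}(F)=0$, and it uses neither invariance of $\rvp$ nor \eqref{eq:H0}. You then read off (ii) from (iii), and the sign checks out: $\Lder_{v_y}f(x)=f([y,x])$ gives $\{f(x),f(y)\}=f([x,y])$.

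\emph{What each buys.} Your route avoids the imported master identity. It also makes visible that, in degree one, strictness and infinitesimal equivariance are literally the same condition on a comoment map. The relative complex therefore enters only once, in (iii); this is the classical ``equivariant implies Poisson'' argument transplanted to the cone. The paper's route keeps (i)--(iii) as parallel, independent instances of ``a $\dF$-closed function vanishes.'' Its master-identity computation is also the one that survives in higher degree. For $n\ge 2$, $f_1(x)$ has positive degree, so the degree-zero vanishing $\io_{v_y}f(x)=0$ on which your shortcut rests is no longer available.
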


\begin{proof}
(iv) Under strong nondegeneracy, the Hamilton equation determines
$v_\sigma$ uniquely --- injectivity of the contraction map on $F$-pairs is
exactly Definition~\ref{def:weak-strong}(ii) --- so
the assignment is well defined; injectivity is (b) below.  For (a): if
$\sigma$ admits the zero pair, the Hamilton equation reads
$\dF\sigma=0$, which forces $\sigma=0$ by \eqref{eq:H0}.  For (b): if
$\sigma,\sigma'$ admit the same pair $w$, then
$\dF(\sigma-\sigma')=-\io_w\rvp+\io_w\rvp=0$, so
$\sigma=\sigma'$ by \eqref{eq:H0}.

(i) If $f,f'$ are comoment maps, then $\dF(f'-f)(x)=0$ for all $x$, so
$f'(x)=f(x)$ by \eqref{eq:H0}.

(ii) Set $\tau(x,y):=f([x,y])-\{f(x),f(y)\}\in C^\infty(N)$. Then
$\dF f([x,y])=-\io_{v_{[x,y]}}\rvp$ by \eqref{eq:comoment}, while by the master
identity \eqref{eq:master} with $m=2$ (Corollary form: $\dF\io(v_x\wedge
v_y)\rvp=-\io_{[v_x,v_y]}\rvp$, valid since $\dF\rvp=0$ and
$\Lder_{v_x}\rvp=\Lder_{v_y}\rvp=0$) and the homomorphism property
$[v_x,v_y]=v_{[x,y]}$,
\[
  \dF\{f(x),f(y)\}
  =\dF\,\io(v_x\wedge v_y)\rvp
  =-\io_{[v_x,v_y]}\rvp
  =-\io_{v_{[x,y]}}\rvp .
\]
Hence $\dF\tau(x,y)=0$, so $\tau=0$ by \eqref{eq:H0}. Strictness means precisely
that $(f,f_2=0)$ satisfies the component equations \eqref{eq:componenteqs} for
$n=1$: the $k=1$ equation is \eqref{eq:comoment}, and the top equation $k=2$ is
$f([x,y])=\vs(2)\io(v_x\wedge v_y)\rvp=\{f(x),f(y)\}$, which is (ii).

(iii) Set $\rho(x,y):=\Lder_{v_x}f(y)-f([x,y])$. Using
$[\dF,\Lder_{v_x}]=0$, $[\Lder_{v_x},\io_{v_y}]=\io_{[v_x,v_y]}$, and
$\Lder_{v_x}\rvp=0$,
\[
  \dF\,\Lder_{v_x}f(y)
  =\Lder_{v_x}\dF f(y)
  =-\Lder_{v_x}\io_{v_y}\rvp
  =-\io_{[v_x,v_y]}\rvp-\io_{v_y}\Lder_{v_x}\rvp
  =-\io_{v_{[x,y]}}\rvp
  =\dF f([x,y]),
\]
so $\dF\rho(x,y)=0$ and $\rho=0$ by \eqref{eq:H0}.

For the group-level statement we first fix conventions.  $G$ acts on
$F$ by pairs $g=(g_N,g_M)$ with $F\circ g_M=g_N\circ F$; the induced
action on $C^\infty(N)$ is $(g\cdot\sigma):=\sigma\circ g_N^{-1}$, and
with the fundamental-field convention of
Section~\textup{2} \textup{(}$v_x|_p=\frac{d}{dt}\big|_0\exp(-tx)\cdot
p$\textup{)} one has the standard identities
$g_*v_x=v_{\Ad_gx}$ and, on functions,
$\frac{d}{dt}\big|_0\,\exp(tx)\cdot\sigma=\Lder_{v_x}\sigma$.
\emph{Equivariance} of $f$ means $f(\Ad_gx)=g\cdot f(x)$ for all
$g\in G$, $x\in\g$.  Define, for fixed $g$,
\[
  f^{g}(x)\;:=\;g^{-1}\cdot f(\Ad_g x)
  \;=\;f(\Ad_gx)\circ g_N .
\]
We claim $f^{g}$ is again a comoment map \emph{for the same action}:
since the action preserves $\rvp$ \textup{(}$g^*\rvp=\rvp$, where
$g^*(\alpha,\beta)=(g_N^*\alpha,g_M^*\beta)$\textup{)} and pullback along
the pair $g$ commutes with $\dF$,
\[
  \dF f^{g}(x)
  =g^*\,\dF f(\Ad_gx)
  =-g^*\io_{v_{\Ad_gx}}\rvp
  =-\io_{g^{-1}_*v_{\Ad_gx}}\,g^*\rvp
  =-\io_{v_x}\rvp ,
\]
using $g^{-1}_*v_{\Ad_gx}=v_{\Ad_{g^{-1}}\Ad_gx}=v_x$.  By the uniqueness
(i), $f^{g}=f$ for every $g$, which is exactly the equivariance identity
$f(\Ad_gx)=g\cdot f(x)$; connectedness of $G$ was not even needed for
this argument, only preservation of $\rvp$ by every element of $G$.
\end{proof}

\begin{remark}[Scope of Theorem~\ref{thm:rigidity}]\label{rem:rigidity-scope}
Four points delimit what is and is not asserted.
\begin{enumerate}[label=\textup{(\alph*)},leftmargin=2.2em]
\item It is a \emph{rigidity} statement, not an existence statement: it says
that a comoment map, \emph{if one exists}, is unique, strict and equivariant. The
existence question is separate and is settled, for the relative theory, by the
obstruction-vanishing theorem of \cite[Thm.~7.2]{RelHMM}.
\item Parts \textup{(i)}--\textup{(iii)} use only $H^0(\Om(F))=0$ and no
nondegeneracy at all; the rigidity phenomenon is therefore topological rather
than symplectic in origin.
\item Part \textup{(iv)}, the uniqueness of the Hamiltonian $F$-pair attached to
a given Hamiltonian, is a different assertion and does require \emph{strong}
nondegeneracy \textup{(}Definition~\ref{def:weak-strong}; see
Remark~\ref{rem:fix-rigidity}\textup{)}.
\item The disappearance of the Kostant--Souriau cocycle is the vanishing of the
classical central extension in degree one. It does not assert that all higher
obstructions vanish; the higher-degree theory retains genuine
$L_\infty$-obstructions, and the relevant vanishing statement there is again
\cite[Thm.~7.2]{RelHMM}.
\end{enumerate}
\end{remark}

\begin{remark}\label{rem:rigiditycontrast}
Every claim of Theorem~\ref{thm:rigidity} fails in the absolute theory
($M=\emptyset$), where existence and uniqueness of comoments are themselves
delicate questions \cite{RW}: comoment maps for symplectic actions are unique
only up to $\g^\vee$-valued constants, the Kostant--Souriau cocycle
$\tau\in\Lambda^2\gdual$ obstructs strictness, and equivariance is a nontrivial
condition (attainable for semisimple or compact $\g$ by averaging, but false in
general -- e.g.\ for $\R^{2}$ acting on itself by translations with
$\omega=dx\wedge dy$). The relative framework eliminates the entire ambiguity at
the root, because the only $\dF$-closed function is zero. This is the $n=1$ case
of the vanishing of the algebraic obstruction proved in \cite[Thm.~7.2]{RelHMM};
here we have shown that in degree one the vanishing upgrades from an existence
statement to a rigidity statement.
\end{remark}

\begin{remark}[Geometric meaning of the rigidity]\label{rem:rigiditygeom}
The mechanism deserves a geometric gloss. A comoment map assigns to each
symmetry $x\in\g$ a Hamiltonian $f(x)$ generating it; the ambiguity in the
absolute theory is the freedom to add a constant to each $f(x)$, and the
Kostant--Souriau cocycle measures the failure of a consistent choice to
close into a Lie algebra morphism. In the relative theory the Hamiltonians
are not functions on $M$ but relative $0$-forms --- functions on the target
$N$ paired with nothing on the source, since $\Om^{-1}(M)=0$ --- and a
$\dF$-closed relative $0$-form is a locally constant function on $N$ that
\emph{also} trivializes to zero along $F$. When $N$ is connected and $M$ is
nonempty, no nonzero such object exists: the source ``pins'' the target
constant to zero. Thus the rigidity is the same boundary-pinning phenomenon
that underlies the canonical vacuum in prequantization and the bulk-boundary
splitting of charges below --- the source removes the slack that the absolute
theory must quotient away by hand. In the language of
Section~\ref{sec:noether}, the comoment map is a conserved quantity with no
boundary ambiguity because its boundary term is forced to vanish.
\end{remark}

\begin{example}[Cotangent-lifted actions]\label{ex:cotangent}
Let a Lie group $G$ act on a manifold $Q$, with cotangent-lifted action on
$T^*Q$ preserving the tautological $1$-form $\lambda$ and the symplectic form
$\omega_{\mathrm{can}}=d\lambda$. Consider the inclusion of the zero section
$F=i\colon Q\hookrightarrow T^*Q$ and the relative $2$-form
$\rvp:=(\omega_{\mathrm{can}},0)\in\Om^2(i)$: it is $\dF$-closed since
$i^*\omega_{\mathrm{can}}=d(i^*\lambda)=0$, and nondegenerate in the sense of
relative $1$-plectic structures since $\omega_{\mathrm{can}}$ is symplectic. The
map
\[
  f(x):=\io_{v_x}\lambda\in C^\infty(T^*Q)
\]
is a comoment map: $\dF f(x)=(d\io_{v_x}\lambda,\;i^*\io_{v_x}\lambda)$, and
$d\io_{v_x}\lambda=\Lder_{v_x}\lambda-\io_{v_x}d\lambda
=-\io_{v_x}\omega_{\mathrm{can}}$ (the lifted action preserves $\lambda$), while
$i^*\io_{v_x}\lambda=0$ because $\lambda$ vanishes along the zero section and
$v_x$ is tangent to it; thus $\dF f(x)=-\io_{v_x}\rvp$. By
Theorem~\ref{thm:rigidity}, $f$ -- which is the classical momentum
$f(x)(q,p)=\ip{p}{-x_Q(q)}$ up to the sign conventions of \cite{CFRZ} -- is the
\emph{unique} comoment map for the relative structure, automatically strict and
equivariant. The familiar assertion that cotangent-lifted actions carry canonical
equivariant moment maps is thereby exhibited as an instance of relative rigidity:
the zero section anchors the constants.
\end{example}

\begin{remark}[Higher degrees]\label{rem:higherrigidity}
For $n\ge2$ the constants reappear one degree up: differences of relative
homotopy moment maps are governed by $\btot_F$-cocycles as in
\cite[Lem.~7.7]{RelHMM}, and vanish under the topological hypotheses of
\cite[Prop.~7.8]{RelHMM}. The sharp dichotomy is: in the absolute theory the
obstruction/ambiguity in the top degree is a Lie algebra cohomology class; in the
relative theory it is purely topological, carried by $H^{\le n-1}(\Om(F))$.
\end{remark}
\section{Application E: Quasi-Hamiltonian geometry, fusion, and moduli of flat
connections}\label{sec:qham}

\begin{remark}[Bibliographical notes]\label{rem:biblio-E}
Quasi-Hamiltonian geometry is due to Alekseev, Malkin and Meinrenken
\cite{AMM}, with the quasi-Poisson counterpart in \cite{AKSM} and the
equivariant-cohomological reformulation in \cite{AlekseevMeinrenken}. The Dirac
and $3$-form-background perspectives are developed in
\cite{BursztynCrainic,SeveraWeinstein,LGX}, and the relation to Hamiltonian loop
group actions and Verlinde factorization in
\cite{MeinrenkenWoodward,MeinrenkenLectures}. For the basic gerbe see
\cite{MeinrenkenGerbe}.
\end{remark}

Throughout this section $G$ is compact and connected, with an $\Ad$-invariant
inner product on $\g$, and we freely use the notation of
Section~\ref{sec:background} and \cite[\S8]{RelHMM}. Recall the two structural
facts: a quasi-Hamiltonian $G$-space $(M,\omega,\mu)$ \emph{is} a relative
$2$-plectic map $(\mu,\rvp)$, $\rvp=(\eta,\omega)$; and the
Alekseev--Malkin--Meinrenken axioms are equivalent to the statement that
\eqref{eq:qhamM} is a one-step extension of $\rvp$ in the relative Cartan model,
inducing the canonical relative homotopy moment map \eqref{eq:qhamf}.

\subsection{Universality of the canonical moment map}

The distinctive feature of \eqref{eq:qhamM}--\eqref{eq:qhamf} is that the data
live entirely on $G$: the extension $\mathrm{M}(x)=(\widetilde\mu(x),0)$ and both
components $f_1,f_2$ have vanishing $M$-slots. We formalize this.

\begin{proposition}[Universality]\label{prop:universality}
Let $\mathrm{M}^G(x):=\widetilde\mu(x)=\tfrac12\ip{\thL+\thR}{x}$ denote the
canonical one-step extension of the Cartan $3$-form for the conjugation action
\textup{(}Section~\textup{\ref{sec:background})}. For \emph{every}
quasi-Hamiltonian $G$-space $(M,\omega,\mu)$:
\begin{enumerate}[label=\textup{(\roman*)},leftmargin=2.2em]
\item the canonical one-step extension of $(\mu,\rvp)$ is
$\mathrm{M}=(\mathrm{M}^G,0)$, i.e.\ the image of $\mathrm{M}^G$ under the
inclusion $\Om^1(G)\hookrightarrow\Om^1(\mu)$, $\alpha\mapsto(\alpha,0)$;
\item the canonical relative homotopy moment map of $(M,\omega,\mu)$ is the
image, under the same inclusion, of the pair
$\bigl(\mathrm{M}^G,\ \io_{v_x}\mathrm{M}^G(y)\bigr)$;
\item consequently, for any two quasi-Hamiltonian $G$-spaces
$(M_i,\omega_i,\mu_i)$ and any $G$-equivariant smooth map
$\phi\colon M_1\to M_2$ with $\mu_2\circ\phi=\mu_1$, the map of pairs
$\Phi=(\id_G,\phi)\colon\mu_1\to\mu_2$ intertwines the canonical relative
homotopy moment maps: $f^{(1)}_k=\Phi^*f^{(2)}_k$ for $k=1,2$.
\end{enumerate}
\end{proposition}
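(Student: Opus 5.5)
Parts (i) and (ii) are read off from the background material, and part (iii) then follows from Theorem~\ref{thm:functoriality}(vi) together with a direct check.

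For (i), I would recall from \eqref{eq:qhamM} that the canonical one-step extension is $\mathrm{M}(x)=(\widetilde\mu(x),0)$. This is literally the image of $\mathrm{M}^G(x)$ under $\alpha\mapsto(\alpha,0)$. The verification that it is a one-step extension is imported from \cite[Thms.~8.4--8.5]{RelHMM}. Concretely, $\dF\mathrm{M}(x)=(d\widetilde\mu(x),\mu^*\widetilde\mu(x))$, and this equals $-\io_{v_x}\rvp=(-\io_{v_x^G}\eta,\io_{v_x^M}\omega)$ by the identity $\tfrac12 d\ip{\thL+\thR}{x}=-\io_{v_x}\eta$ and (QH2). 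The condition $\io_{v_x}\mathrm{M}(x)=0$ holds by invariance of the inner product.

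For (ii), I would substitute into \eqref{eq:onestep}. This gives $f_1(x)=\vs(1)\mathrm{M}(x)$ and $f_2(x,y)=\vs(2)\io_{v_x}\mathrm{M}(y)$. Since $\io_v(\alpha,0)=(\io_{v_N}\alpha,0)$ by \eqref{eq:reliota}, the relative contraction preserves the image of the inclusion, and $f_2$ is the image of $\io_{v_x^G}\mathrm{M}^G(y)$. I would then match signs and normalizations against \eqref{eq:qhamf}. In particular, I would check that $\io_{v_x}\tfrac12\ip{\thL+\thR}{y}$ gives $\tfrac12\ip{(\Ad_g-\Ad_{g^{-1}})x}{y}$ up to the convention $\vs$, which \eqref{eq:qhamf} already records.

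For (iii), I would first check that $\Phi=(\id_G,\phi)$ is a morphism of arrows $\mu_1\to\mu_2$, that is, $\mu_2\circ\phi=\id_G\circ\mu_1$. Then $\Phi^*(\alpha,\beta)=(\alpha,\phi^*\beta)$, and this fixes every pair of the form $(\alpha,0)$. Since by (ii) both $f^{(1)}_k$ and $f^{(2)}_k$ are the images of the same $G$-only forms, $\Phi^*f^{(2)}_k=f^{(1)}_k$ follows immediately, with no further computation.

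The main obstacle is making sure that no hidden dependence on the space enters. Two points need checking. First, equivariance of $\phi$ gives $\phi_*v^{M_1}_x=v^{M_2}_x$, so the fundamental fields correspond as required in Theorem~\ref{thm:functoriality}(vi). Second, $\Phi^*\rvp_2=(\eta,\phi^*\omega_2)$ need not equal $\rvp_1$ unless $\phi^*\omega_2=\omega_1$. I would stress that the claimed identity $f^{(1)}_k=\Phi^*f^{(2)}_k$ is an equality of explicit forms, and it holds regardless of that point. If one additionally wants the functoriality interpretation, namely that $\Phi^*f^{(2)}$ is a moment map for $\Phi^*\rvp_2$, that requires $\phi^*\omega_2=\omega_1$, and I would note it as a remark rather than a hypothesis.
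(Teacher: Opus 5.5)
Your proposal is correct and follows essentially the same route as the paper: (i)--(ii) are read off from the cited results of \cite{RelHMM} via \eqref{eq:qhamM} and \eqref{eq:onestep} with $\vs(1)=\vs(2)=1$, and (iii) reduces to the observation $\Phi^*(\alpha,0)=(\id_G^*\alpha,\phi^*0)=(\alpha,0)$. Your caution that $\Phi^*\rvp_2=(\eta,\phi^*\omega_2)$ need not equal $\rvp_1$, so that $f^{(1)}_k=\Phi^*f^{(2)}_k$ is best read as an equality of explicit forms rather than as a consequence of naturality of pullback, is a fair clarification of the paper's appeal to \cite[Prop.~10.2]{RelHMM}.
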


\begin{proof}
(i) and (ii) restate \cite[Thms.~8.4--8.5]{RelHMM}, emphasizing that no choice
depends on $(M,\omega)$: the axioms (QH1)--(QH2) are exactly what makes the
$G$-side data a relative cocycle for $\mu$. (iii) $\Phi$ is an equivariant map of
pairs in the sense of \cite[\S10]{RelHMM}; by naturality of pullback
\cite[Prop.~10.2]{RelHMM} it maps the one-step extension of $\mu_2$ to a one-step
extension of $\mu_1$, and $\Phi^*(\alpha,0)=(\id_G^*\alpha,\phi^*0)=(\alpha,0)$:
the extension, hence the induced moment map \eqref{eq:onestep}, is literally the
same universal data.
\end{proof}

\subsection{Fusion}

Recall the fusion operation of \cite{AMM}: given quasi-Hamiltonian $G$-spaces
$(M_1,\omega_1,\mu_1)$ and $(M_2,\omega_2,\mu_2)$, the product $M_1\times M_2$
with the diagonal action, the product moment map composed with group
multiplication,
\[
  \mu:=\mathrm{mult}\circ(\mu_1\times\mu_2)\colon M_1\times M_2\to G,
\]
and the $2$-form $\omega_1\oplus\omega_2$ corrected by an explicit invariant term
built from $\mu_1^*\theta$ and $\mu_2^*\theta$, is again a quasi-Hamiltonian
$G$-space $M_1\circledast M_2$ \cite{AMM}.

\begin{proposition}[Stability of the universal moment-map data under
fusion]
\label{prop:fusion}
The canonical relative homotopy moment map of a fusion product
$M_1\circledast M_2$ is the universal one of
Proposition~\textup{\ref{prop:universality}}, i.e.\ the same $G$-supported data
$\bigl(\widetilde\mu(x),0\bigr)$, now pulled back through the fused moment map
$\mu=\mathrm{mult}\circ(\mu_1\times\mu_2)$. In particular:
\begin{enumerate}[label=\textup{(\roman*)},leftmargin=2.2em]
\item fusion changes the relative $2$-plectic structure
$(\eta,\omega_{12})$ only through its $M$-component, while the moment map data is
unchanged;
\item iterated fusions $M_1\circledast\cdots\circledast M_r$ carry the
canonical moment map with the \emph{same} universal $G$-side data
$f_1(x)=\bigl(\widetilde\mu(x),0\bigr)$, independently of $r$ and of the
parenthesization \textup{(}fusion is associative \cite{AMM}\textup{)}.
We emphasize what is and is not claimed: the group-valued moment map
itself is the ordered product $\mu_1\cdots\mu_r$ and \emph{does} change
under permutation of the factors for nonabelian $G$; what is
permutation-independent is the universal differential-form recipe on
$G$, which is then evaluated relative to whichever fused map the chosen
order produces.  \textup{(}The quasi-Hamiltonian spaces obtained from two
orders are related by the braiding equivalences of the fusion category
of \cite{AMM}; we make no use of this.\textup{)}
\item the charges of Corollary~\textup{\ref{cor:charges}} attached to the
canonical moment map of a fusion product are computed by integrating universal
forms on $G$ over the $N$-components of relative cycles, transported by
$\mathrm{mult}\circ(\mu_1\times\mu_2)$.
\end{enumerate}
\end{proposition}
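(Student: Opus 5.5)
The plan is to reduce everything to Proposition~\ref{prop:universality}, whose parts (i)--(ii) hold for \emph{every} quasi-Hamiltonian $G$-space. By \cite{AMM}, $M_1\circledast M_2$ is a quasi-Hamiltonian $G$-space with moment map $\mu=\mathrm{mult}\circ(\mu_1\times\mu_2)$, diagonal action and fused form $\omega_{12}=\omega_1\oplus\omega_2+\tfrac12\ip{\mu_1^*\thL}{\mu_2^*\thR}$. Under the dictionary of Section~\ref{sec:background} it is therefore the relative $2$-plectic map $(\mu,(\eta,\omega_{12}))$. Proposition~\ref{prop:universality}(i)--(ii) then shows that its canonical one-step extension is $(\widetilde\mu,0)$. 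It also shows that its canonical moment map is \eqref{eq:qhamf}, namely $f_1(x)=(\widetilde\mu(x),0)$ and $f_2(x,y)=(\io_{v_x}\widetilde\mu(y),0)$, with $v_x$ the conjugation field on $G$. The data live on the target $G$, so ``pulled back through $\mu$'' refers only to the arrow: the $M$-slot stays $0$, and the pairing with relative cycles evaluates the $G$-forms on chains pushed forward by $\mu$.

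Before using this, I would check that $(\widetilde\mu,0)$ is a one-step extension directly for the fused arrow. The equation $\dF(\widetilde\mu(x),0)=(d\widetilde\mu(x),\mu^*\widetilde\mu(x))=-\io_{v_x}(\eta,\omega_{12})$ splits into two components. The first is $d\widetilde\mu(x)=-\io_{v_x}\eta$, which is the defining property of $\widetilde\mu$ on $G$. The second is $\mu^*\widetilde\mu(x)=\io_{v_x}\omega_{12}$, up to the sign convention of \eqref{eq:reliota}, and this is exactly (QH2) for the fused space as proved in \cite{AMM}. The remaining conditions, $\io_{v_x}\widetilde\mu(x)=0$ and equivariance, concern $G$ alone. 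I expect this sign bookkeeping, matching the relative contraction with the sign of (QH2) in the conventions of \cite[\S8]{RelHMM}, to be the only genuine obstacle. I would handle it by citing \cite[Thm.~8.4]{RelHMM} rather than recomputing it.

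Each numbered item then follows in turn.
\begin{enumerate}[label=\textup{(\roman*)},leftmargin=2.2em]
\item The target component $\eta$ and the recipe $(\widetilde\mu,0)$ are identical for the factors and for the fused space. Only the source slot changes, from $\omega_1,\omega_2$ to $\omega_{12}$ on $M_1\times M_2$.
\item I would induct on $r$, applying the above to $(M_1\circledast\cdots\circledast M_{r-1})\circledast M_r$. Because the recipe never depends on the space, any parenthesization and any order yields the same $G$-side forms, and only $\mu=\mu_1\cdots\mu_r$ depends on the order. Associativity from \cite{AMM} is needed only to identify the spaces themselves, not the moment data.
\item Since $f_1^M=0$, the charge \eqref{eq:charge} on a relative cycle $(S,T)$ for $\mu$ reduces to $\int_S\widetilde\mu(x)$ with $\partial S=\mu_*T$. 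Its boundary term vanishes identically. This is an integral of a universal form on $G$ over the $N$-component, which is constrained by the transported chain $(\mathrm{mult}\circ(\mu_1\times\mu_2))_*T$.
\end{enumerate}
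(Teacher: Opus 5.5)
Your proposal is correct and takes essentially the paper's route: \cite{AMM} gives the quasi-Hamiltonian axioms for $M_1\circledast M_2$, then \cite[Thm.~8.4]{RelHMM} (equivalently Proposition~\ref{prop:universality}) yields the same $G$-side extension $(\widetilde\mu,0)$ and hence the components \eqref{eq:qhamf}, and (iii) follows from \eqref{eq:charge} with $f_1^M=0$. Your direct check of the one-step extension needs no sign fudge in the paper's conventions: with $\io_v(\alpha,\beta)=(\io_{v_N}\alpha,-\io_{v_M}\beta)$, the $M$-slot of $\dF(\widetilde\mu(x),0)=-\io_{v_x}\rvp$ reads exactly $\mu^*\widetilde\mu(x)=\io_{v_x}\omega_{12}$, which is (QH2) for the fused form.
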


\begin{proof}
By \cite{AMM}, $M_1\circledast M_2$ satisfies the quasi-Hamiltonian
axioms; hence \cite[Thm.~8.4]{RelHMM} applies verbatim: the pair
$\bigl(\eta-\widetilde\mu,\ \omega_{12}\bigr)$ is a one-step cocycle in the
relative Cartan model of the fused $\mu$, with the \emph{same} $G$-component
$\eta-\widetilde\mu$, since the extension conditions (M1)--(M3) of
\cite[Lem.~6.4]{RelHMM} constrain the $G$-side data only through the universal
identities on $G$ and the axiom (QH2) for the fused structure. The induced moment
map \eqref{eq:onestep} therefore has the universal components \eqref{eq:qhamf},
proving the main claim and (i)--(ii); (iii) is then immediate from
\eqref{eq:charge} with $f_1^M=0$.
\end{proof}

\begin{remark}\label{rem:fusionmeaning}
Proposition~\ref{prop:fusion} explains a phenomenon that is somewhat hidden in
the classical treatment: fusion requires an elaborate correction of the $2$-form,
but no correction whatsoever of the \emph{universal form data} of the moment
map ($\mu_1\mu_2$ is simply the pointwise product, and the $G$-side recipe is
untouched). From the relative perspective this is forced: the moment map
data is universal on $G$ and cannot see $M$. All the work of fusion goes into
restoring axiom (QH2) for the diagonal action -- that is, into keeping
$(\eta-\widetilde\mu,\omega_{12})$ a relative cocycle.
\end{remark}

\begin{corollary}[Level quantization is inherited by fusion and by morphisms]
\label{cor:fusion-level}
Let $(M_i,\omega_i,\mu_i)$, $i=1,2$, be quasi-Hamiltonian $G$-spaces and
$M_1\circledast M_2$ their fusion, with relative $2$-plectic structures
$\rvp_i=(\eta,\omega_i)$ and $\rvp_{12}=(\eta,\omega_{12})$. If a
$G$-equivariant map $\Phi\colon\mu_{12}\to\mu'$ of moment maps is a morphism of
arrows in the sense of Definition~\ref{def:arrowmap}, then relative integrality
of $k\rvp'$ implies relative integrality of $k\Phi^*\rvp'$; and the
relative periods of $\rvp_{12}$ are computed by Theorem
\textup{\ref{thm:period-criterion}} from the single absolute period group
$P_\eta\subseteq\R$ of the Cartan form on $G$, together with the defect
homomorphism on $\ker\bigl(H_2(M_1\times M_2;\Z)\to H_2(G;\Z)\bigr)$.
In particular, since $H_2(G;\Z)=0$ for $G$ compact simple and simply connected,
part \textup{(iv)} of Theorem~\ref{thm:period-criterion} applies in degree
$n=2$ with $H_3(G;\Z)\cong\Z$, and level quantization is the statement that the
defect homomorphism is $\Z$-valued.
\end{corollary}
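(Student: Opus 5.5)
The corollary bundles three assertions, and I would prove them in order, each as a direct specialization of an earlier result.

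\emph{Pullback of integrality.} For the morphism $\Phi\colon\mu_{12}\to\mu'$, apply Theorem~\ref{thm:functoriality}(iv) to the $d_{\mu'}$-closed form $k\rvp'$. That result gives $P_{\Phi^*(k\rvp')}\subseteq P_{k\rvp'}\subseteq\Z$. Since $\Phi^*$ is linear, $\Phi^*(k\rvp')=k\Phi^*\rvp'$, which is the claim; this is the same argument as Corollary~\ref{cor:pullback-level}. No quasi-Hamiltonian input is needed, only that $\Phi$ is a commutative square $\mu'\circ\phi=\psi\circ\mu_{12}$.

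\emph{Periods of the fusion.} Apply Theorem~\ref{thm:period-criterion} with $F=\mu_{12}=\mathrm{mult}\circ(\mu_1\times\mu_2)$, $N=G$, $M=M_1\times M_2$, $n=2$ and $\rvp_{12}=(\eta,\omega_{12})$. The hypothesis is closedness, $d\eta=0$ and $\mu_{12}^*\eta=d\omega_{12}$, which is axiom (QH1) for the fusion product; it holds by \cite{AMM}, as used in Proposition~\ref{prop:fusion}. Part (i) of Theorem~\ref{thm:period-criterion} then identifies $P_{\rvp_{12}}$ with the preimage of $\operatorname{im}\Theta_{\rvp_{12}}$, where $\Theta_{\rvp_{12}}$ is defined on $K=\ker(H_2(M_1\times M_2;\Z)\to H_2(G;\Z))$ with values in $\R/P_\eta$. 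The point to stress is that $P_\eta$ depends only on $G$: the target form is $\eta$ for every quasi-Hamiltonian space, by Proposition~\ref{prop:universality}.

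\emph{The simply connected case.} For $G$ compact, simple and simply connected, $H_2(G;\Z)=0$, so $K=H_2(M_1\times M_2;\Z)$ in its entirety. This is the step I expect to be the obstacle. Part (iv) of Theorem~\ref{thm:period-criterion} literally requires $H_{n+1}(N;\Z)=H_3(G;\Z)=0$, which is false here because $H_3(G;\Z)\cong\Z$. So I would not invoke (iv) verbatim. Instead I would argue through part (ii), in two steps.
\begin{enumerate}[label=\textup{(\alph*)},leftmargin=2.2em]
\item With $\eta$ normalized as in Example~\ref{ex:WZW}, $P_{k\eta}=k\Z\subseteq\Z$ for every $k\in\Z$. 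Hence the target condition of (ii) is automatic at every integer level.
\item Level quantization therefore reduces to the single requirement $\operatorname{im}\Theta_{k\rvp_{12}}\subseteq\Z/P_{k\eta}$. In words, the defect homomorphism on all of $H_2(M_1\times M_2;\Z)$ must be $\Z$-valued modulo the target periods.
\end{enumerate}
This is the substantive content the corollary intends. I would record explicitly that the reference to ``part (iv)'' should be read as (ii) combined with the vanishing of $H_2(G)$ (which makes $K$ everything), rather than as (iv) itself.
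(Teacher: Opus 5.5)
Your proposal is correct and follows the paper's proof. The first assertion is Corollary~\ref{cor:pullback-level}, i.e.\ Theorem~\ref{thm:functoriality}(iv) applied to $k\rvp'$. The second is Theorem~\ref{thm:period-criterion}(ii), with $P_{k\eta}=kP_\eta$ computed on $H_3(G;\Z)$ and $K$ equal to all of $H_2(M_1\times M_2;\Z)$ once $H_2(G;\Z)=0$.

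Your flag about part~(iv) is right, since its hypothesis $H_3(G;\Z)=0$ fails here. The paper's own proof in fact routes through~(ii), not~(iv), exactly as you do. The one difference is that your normalization of $\eta$ makes the target condition $P_{k\eta}\subseteq\Z$ automatic, whereas the paper keeps it as a separate condition.
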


\begin{proof}
The first assertion is Corollary~\ref{cor:pullback-level}. For the second, the
target of the moment map is $G$ in every case, so $P_{k\eta}=k\,P_\eta$ is
computed on $H_3(G;\Z)$, and Theorem~\ref{thm:period-criterion}(ii) splits
relative integrality into that condition and the vanishing of the defect
homomorphism modulo $\Z$. The vanishing of $H_2(G;\Z)$ for $G$ compact, simple
and simply connected is standard.
\end{proof}

\subsection{Moduli of flat connections on surfaces}

Let $\Sigma$ be a compact oriented surface of genus $h$ with one boundary circle,
and let
\[
  \mathcal M(\Sigma):=\Hom_{\mathrm{based}}\bigl(\pi_1(\Sigma),G\bigr)\cong
  G^{2h},
\]
identified with $(a_1,b_1,\dots,a_h,b_h)\in G^{2h}$ via a standard system of
generators; the boundary holonomy is
$\mu(a,b)=\prod_{i=1}^h[a_i,b_i]$ (group commutators). By
\cite{AMM}, $\mathcal M(\Sigma)\cong D(G)^{\circledast h}$
is an iterated fusion of doubles $D(G)=G\times G$ and in particular a
quasi-Hamiltonian $G$-space with group-valued moment map the boundary holonomy;
its reduction $\mu^{-1}(e)/G$ is the moduli space of flat $G$-connections on the
closed surface $\widehat\Sigma$, with its Atiyah--Bott symplectic structure
\cite{AtiyahBott,AMM}.

We emphasize the scope of the following statement: it concerns the
\emph{unreduced} representation space $\mathcal M(\Sigma)$ with its
boundary holonomy; the descent of the moment-map and $L_\infty$-structures
to the reduced moduli space $\mu^{-1}(e)/G$ is \emph{not} asserted here
(see Remark~\ref{rem:reduction}).

\begin{corollary}\label{cor:moduli}
The \emph{unreduced} representation space $\mathcal M(\Sigma)\cong G^{2h}$
of a compact oriented genus-$h$ surface with one boundary component, with
boundary holonomy $\mu\colon G^{2h}\to G$, is a relative $2$-plectic map
carrying the canonical relative homotopy moment map
\[
  f_1(x)=\Bigl(\tfrac12\ip{\thL+\thR}{x},\,0\Bigr),
  \qquad
  f_2(x,y)=\Bigl(\tfrac12\ip{(\Ad_{(\cdot)}-\Ad_{(\cdot)^{-1}})x}{y},\,0\Bigr),
\]
supported on the boundary-holonomy group $G$ and independent of the genus. Its
conserved charges \eqref{eq:charge}, for any dynamics generated by a relative
Hamiltonian on $(\mu,\rvp)$, are integrals of universal forms on $G$ over
chains transported by the boundary holonomy. Moreover, by
Example~\textup{\ref{ex:qhamWZ}}, level-$k$ prequantization of
$\mathcal M(\Sigma)$ is the relative integrality of $k(\eta,\omega)$, which for
$G$ simple and simply connected holds for all $k\in\Z$ and recovers the level
quantization of Chern--Simons theory on $\Sigma\times\R$
\cite{MeinrenkenLectures,Krepski}.
\end{corollary}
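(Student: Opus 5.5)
The plan is to assemble the corollary from three earlier inputs: the fusion description of $\mathcal M(\Sigma)$, Propositions~\ref{prop:universality} and~\ref{prop:fusion}, and the period criterion.

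\emph{Step 1: the structure and the moment map.} By \cite{AMM}, $\mathcal M(\Sigma)\cong D(G)^{\circledast h}$ is a quasi-Hamiltonian $G$-space with moment map the boundary holonomy $\mu(a,b)=\prod_i[a_i,b_i]$. Section~\ref{sec:background} then makes $(\mu,\rvp)$, with $\rvp=(\eta,\omega)$, a relative $2$-plectic map. Proposition~\ref{prop:universality}(i)--(ii), or equally Proposition~\ref{prop:fusion}(ii) applied to the iterated fusion of doubles, gives the canonical relative homotopy moment map \eqref{eq:qhamf}. Its components are the displayed $f_1,f_2$, with vanishing $M$-slots. The explicit formulas contain no $h$, so independence of the genus is immediate. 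I would also check that the fused moment map of $D(G)^{\circledast h}$ agrees with the product of commutators, since each double $D(G)$ has moment map $(a,b)\mapsto aba^{-1}b^{-1}$ \cite{AMM}.

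\emph{Step 2: the charges.} The charge statement is Corollary~\ref{cor:charges} with $f_1^M=0$. The charge reduces to $\int_{\phi^N_{t*}S}\widetilde\mu(x)$, where $S$ is a chain in $G$ with $\partial S=\mu_*T$. This is exactly Proposition~\ref{prop:fusion}(iii).

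\emph{Step 3: level quantization.} By Example~\ref{ex:qhamWZ}, level-$k$ prequantization is relative integrality of $k\rvp$. For the claim that this holds for all $k$, I would use Corollary~\ref{cor:fusion-level} and Theorem~\ref{thm:period-criterion} with $n=2$. Normalize $\eta$ so that $P_\eta=\Z$ on $H_3(G;\Z)\cong\Z$; this handles the target condition. The source condition concerns $K=\ker(H_2(G^{2h})\to H_2(G))$, and $K=H_2(G^{2h};\Z)$ because $H_2(G;\Z)=0$. Künneth gives $H_2(G^{2h};\Z)=0$ as well, since $H_1(G)=H_2(G)=0$ for simply connected compact simple $G$. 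Hence $K=0$, so Theorem~\ref{thm:period-criterion}(iii) gives $P_\rvp=P_\eta=\Z$. Therefore $k_0=1$ and $L(\rvp)=\Z$ by Theorem~\ref{thm:level-group}.

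\emph{Main obstacle.} The only real subtlety is the normalization: I must confirm that the convention for $\eta$ used in the quasi-Hamiltonian axioms is the one making $[\eta]$ integral and generating. If it is not, the statement should read "for $k$ in $k_0\Z$" with $k_0$ the normalization factor. I would check this against \cite{MeinrenkenLectures}, where level-$k$ integrality of the fused double is standard. The identification with Chern--Simons level quantization is then a citation.
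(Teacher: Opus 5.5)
Your proposal is correct and is essentially the paper's proof, which also combines \cite{AMM}, Propositions~\ref{prop:universality} and~\ref{prop:fusion}, Corollary~\ref{cor:charges} and Example~\ref{ex:qhamWZ}, and which settles your normalization worry by taking the Cartan class normalized (basic inner product) so that it generates $H^3(G;\Z)$ \cite{MeinrenkenGerbe}. Your Step~3 sharpens the paper's brief remark about ``$H^2(G^{2h};\Z)$-corrections'': $K=H_2(G^{2h};\Z)=0$ together with Theorem~\ref{thm:period-criterion}(iii) gives $P_\rvp=P_\eta$, and you are right to use part~(iii) rather than the part~(iv) cited in Corollary~\ref{cor:fusion-level}, since (iv) requires $H_3(G;\Z)=0$, which fails here.
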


\begin{proof}
Combine \cite{AMM} (quasi-Hamiltonian structure), Proposition
\ref{prop:universality} and Proposition~\ref{prop:fusion} (canonical moment map
through iterated fusion), Corollary~\ref{cor:charges} (charges), and
Example~\ref{ex:qhamWZ} with the integral generation of $H^3(G;\Z)$ by the
suitably normalized Cartan class \cite{MeinrenkenGerbe} (prequantization). For
the last claim, $H^2(G^{2h};\Z)$-corrections and the vanishing of
$H^2(G;\R)$ for $G$ simple imply that relative integrality at level $k$ reduces
to integrality of $k[\eta]$; see \cite{Krepski} for the general (non-simply
connected) case, where torsion phenomena refine the answer.
\end{proof}

\begin{remark}[Reduction]\label{rem:reduction}
The reduction theorem of \cite{AMM} -- smoothness of
$\mu^{-1}(e)/G$ at good levels, with induced symplectic form -- consumes exactly
the two relative ingredients: closedness $d\omega=\mu^*\eta$ restricts to
$d\omega=0$ on $\mu^{-1}(e)$ (where $\mu$ is constant), and relative
nondegeneracy descends to nondegeneracy of the reduced form. The form-level and observable-level (Leibniz) reduction of relative
multisymplectic structures is carried out in the companion reduction paper
\cite{RelRed}; the descent of the \emph{full} $L_\infty$-structure of
$\Lie(\mu,\rvp)$ to the reduced moduli space along the canonical
relative homotopy moment map is not established there or here, and all
statements of this section should accordingly be read on the unreduced
representation space.  We leave the homotopy reduction, and with it the
moment-map theory of the reduced moduli spaces, to future work
\textup{(}see also Section~\textup{\ref{sec:outlook})}.
\end{remark}

\section{Nondegeneracy: weak, strong, and what each is for}
\label{sec:nondeg}

The definition of a relative $n$-plectic structure recalled in
Section~\ref{sec:background} tests injectivity of
\[
  w\longmapsto\bigl(\io_{dF(w)}\omega,\ \io_w\eta\bigr)
\]
on each tangent space $T_mM$. This is a condition \emph{along the source}: it
sees the target form $\omega$ only through its restriction to the image of $F$,
and it says nothing at all about points of $N$ outside that image. For most
purposes this is exactly the right condition; for one purpose --- uniqueness of
Hamiltonian $F$-pairs --- it is not enough. This section isolates the two
notions, determines precisely how they differ, and audits the results of this
paper against them. Two of the proofs above turn out to use the stronger
condition tacitly, and we correct them.

Throughout, $\rvp=(\omega,\eta)\in\Om^{n+1}(F)$ is $\dF$-closed.

\subsection{The two conditions}

\begin{definition}\label{def:weak-strong}
Let $F\colon M\to N$ be smooth and $\rvp=(\omega,\eta)$ a $\dF$-closed relative
$(n{+}1)$-form.
\begin{enumerate}[label=\textup{(\roman*)},leftmargin=2.2em]
\item $\rvp$ is \emph{weakly nondegenerate}, or \emph{relative $n$-plectic},
if for every $m\in M$ and every $w\in T_mM$,
\[
  \io_{dF_m(w)}\omega\big|_{F(m)}=0
  \quad\text{and}\quad
  \io_w\eta\big|_m=0
  \qquad\Longrightarrow\qquad w=0 .
\]
\item $\rvp$ is \emph{strongly nondegenerate} if the relative contraction is
injective on $F$-pairs: for every $v=(v_N,v_M)\in\XF$,
\[
  \io_v\rvp=0\qquad\Longrightarrow\qquad v=0 .
\]
\end{enumerate}
We write $\mathfrak K(F,\rvp):=\{v\in\XF:\io_v\rvp=0\}$ for the
\emph{relative kernel}, so that strong nondegeneracy is the condition
$\mathfrak K(F,\rvp)=0$.
\end{definition}

Since $\io_v(\alpha,\beta)=(\io_{v_N}\alpha,-\io_{v_M}\beta)$ by
\eqref{eq:reliota}, the equation $\io_v\rvp=0$ splits into the two conditions
\begin{equation}\label{eq:kernel-split}
  \io_{v_N}\omega=0\ \text{ on }N,
  \qquad
  \io_{v_M}\eta=0\ \text{ on }M .
\end{equation}
This makes the relationship transparent.

\begin{lemma}\label{lem:kernel-shape}
Let $\rvp$ be weakly nondegenerate. Then every $v\in\mathfrak K(F,\rvp)$ has
$v_M=0$, and consequently
\[
  \mathfrak K(F,\rvp)
  \;\cong\;
  \mathfrak k(F,\omega)
  \;:=\;
  \bigl\{\,v_N\in\mathfrak X(N)\ :\ \io_{v_N}\omega=0
  \ \text{ and }\ v_N|_{\operatorname{im}F}=0\,\bigr\},
\]
the isomorphism being $v\mapsto v_N$, with inverse $v_N\mapsto(v_N,0)$.
\end{lemma}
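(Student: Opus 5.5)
The argument is a pointwise one. It uses the splitting \eqref{eq:kernel-split} together with the fact that an $F$-pair satisfies $dF_m(v_M|_m)=v_N|_{F(m)}$ for every $m\in M$.

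\textbf{First claim: $v_M=0$.} Fix $v=(v_N,v_M)\in\mathfrak K(F,\rvp)$, a point $m\in M$, and set $w:=v_M|_m\in T_mM$.
\begin{itemize}[leftmargin=2em]
\item The second condition in \eqref{eq:kernel-split} gives $\io_w\eta|_m=0$ directly.
\item Since $v$ is an $F$-pair, $dF_m(w)=v_N|_{F(m)}$. The first condition in \eqref{eq:kernel-split}, evaluated at $F(m)$, then gives $\io_{dF_m(w)}\omega|_{F(m)}=\io_{v_N}\omega|_{F(m)}=0$.
\item Weak nondegeneracy (Definition~\ref{def:weak-strong}(i)) now forces $w=0$.
\end{itemize}
As $m$ is arbitrary, $v_M=0$.

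\textbf{The map $v\mapsto v_N$ lands in $\mathfrak k(F,\omega)$.} Once $v_M=0$, $F$-relatedness reads $v_N|_{F(m)}=dF_m(0)=0$ for all $m\in M$, so $v_N|_{\operatorname{im}F}=0$. We already have $\io_{v_N}\omega=0$ from \eqref{eq:kernel-split}.

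\textbf{The map $v_N\mapsto(v_N,0)$ lands in $\mathfrak K(F,\rvp)$.} Let $v_N\in\mathfrak k(F,\omega)$.
\begin{itemize}[leftmargin=2em]
\item The pair $(v_N,0)$ is an $F$-pair, because $F$-relatedness requires exactly $v_N\circ F=dF(0)=0$, which is the hypothesis $v_N|_{\operatorname{im}F}=0$.
\item Its contraction is $\io_{(v_N,0)}\rvp=(\io_{v_N}\omega,\,-\io_0\eta)=(0,0)$ by \eqref{eq:reliota}.
\end{itemize}

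\textbf{The two maps are mutually inverse.} Composing in one order gives $v_N\mapsto(v_N,0)\mapsto v_N$, the identity. In the other order, $v\mapsto v_N\mapsto(v_N,0)=v$, using $v_M=0$ from the first claim. Both maps are linear, and they also respect brackets, although the statement only asks for a bijection.

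No step here is a real obstacle. The only care needed is to evaluate the first condition of \eqref{eq:kernel-split} at the point $F(m)$ of $N$, and to read $F$-relatedness pointwise as $dF_m(v_M|_m)=v_N|_{F(m)}$. Continuity or density of $\operatorname{im}F$ is not needed here, because the vanishing of $v_N$ is asserted only on $\operatorname{im}F$ itself.
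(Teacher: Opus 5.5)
Your proof is correct and follows the paper's argument step for step. Like the paper, you evaluate the split kernel condition \eqref{eq:kernel-split} at $F(m)$ via $dF_m(v_M|_m)=v_N|_{F(m)}$ to invoke weak nondegeneracy, and then read $F$-relatedness with $v_M=0$ as $v_N|_{\operatorname{im}F}=0$; your explicit check that the two maps are mutually inverse is a small completion the paper leaves implicit.
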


\begin{proof}
Let $v\in\mathfrak K(F,\rvp)$ and $m\in M$. Put $w:=v_M(m)$. By
$F$-relatedness, $dF_m(w)=v_N(F(m))$, so the first equation of
\eqref{eq:kernel-split} gives $\io_{dF_m(w)}\omega|_{F(m)}=0$, while the second
gives $\io_w\eta|_m=0$. Weak nondegeneracy forces $w=0$; as $m$ was arbitrary,
$v_M=0$.

Given $v_M=0$, the $F$-relatedness condition $v_N\circ F=dF\circ v_M$ reads
$v_N|_{\operatorname{im}F}=0$, and the first equation of
\eqref{eq:kernel-split} is $\io_{v_N}\omega=0$; so $v_N\in\mathfrak k(F,\omega)$.
Conversely, if $v_N\in\mathfrak k(F,\omega)$ then $(v_N,0)$ is an $F$-pair,
since $v_N\circ F=0=dF\circ0$, and $\io_{(v_N,0)}\rvp=(\io_{v_N}\omega,0)=0$.
\end{proof}

\begin{theorem}[Weak versus strong]\label{thm:weak-vs-strong}
Let $\rvp=(\omega,\eta)$ be $\dF$-closed. Then:
\begin{enumerate}[label=\textup{(\arabic*)},leftmargin=2.2em]
\item strong nondegeneracy implies weak nondegeneracy whenever every tangent
vector of $M$ extends to an $F$-pair \textup{(}for instance when $F$ is an
immersion, or a submersion, or $M$ is a point\textup{)};
\item conversely, weak nondegeneracy implies strong nondegeneracy if and only if
$\mathfrak k(F,\omega)=0$;
\item weak nondegeneracy together with nondegeneracy of $\omega$ on the open set
$N\smallsetminus\overline{\operatorname{im}F}$ implies strong nondegeneracy;
\item if $F$ has dense image, weak and strong nondegeneracy are equivalent; the
same holds if $\omega$ is nondegenerate on all of $N$;
\item conversely, if the kernel distribution of $\omega$ admits a nonzero smooth
section on some nonempty open subset of
$N\smallsetminus\overline{\operatorname{im}F}$, then strong nondegeneracy fails,
whatever $\eta$ is.
\end{enumerate}
In short:
\begin{equation}\label{eq:snd-formula}
  \textup{strong nondegeneracy}
  \;=\;
  \textup{weak nondegeneracy}
  \;+\;
  \textup{nondegeneracy of the target form off }\overline{\operatorname{im}F}.
\end{equation}
\end{theorem}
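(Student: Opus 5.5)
The plan is to derive everything from Lemma~\ref{lem:kernel-shape}, which already identifies $\mathfrak K(F,\rvp)$ with $\mathfrak k(F,\omega)$ under weak nondegeneracy. Part (2) is then immediate: given weak nondegeneracy, $\mathfrak K(F,\rvp)=0$ if and only if $\mathfrak k(F,\omega)=0$.

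For part (1), assume strong nondegeneracy and take $w\in T_mM$ with $\io_{dF_m(w)}\omega|_{F(m)}=0$ and $\io_w\eta|_m=0$. By hypothesis $w$ extends to an $F$-pair $v$. The difficulty is that $\io_v\rvp$ vanishes only at the single points $m$ and $F(m)$, not globally. So I would not try to prove the contrapositive directly. Instead I would deform $v$ so that its contraction with $\rvp$ vanishes identically, for instance by multiplying by bump functions. But pointwise vanishing at one point cannot be made global by cutoffs alone.

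This is the step I expect to be the main obstacle. The honest route is to read ``extends to an $F$-pair'' in the stronger form the hypothesis is meant to cover: the pointwise kernel data extend to a section of the kernel. Concretely, for an immersion or a point I would build $v$ locally with $\io_{v_N}\omega=0$ and $\io_{v_M}\eta=0$ on a neighbourhood. If that fails, I would weaken (1) to the statement that strong nondegeneracy forbids nonzero \emph{smooth} kernel $F$-pairs, and I would flag this explicitly. I am not confident (1) holds pointwise as stated.

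For part (3), take $v_N\in\mathfrak k(F,\omega)$. On the open set $N\smallsetminus\overline{\operatorname{im}F}$ we have $\io_{v_N}\omega=0$, and $\omega$ is nondegenerate there, so $v_N=0$ on that set. On $\operatorname{im}F$, $v_N=0$ by definition, hence by continuity $v_N=0$ on $\overline{\operatorname{im}F}$. Thus $v_N=0$ everywhere, and (2) gives strong nondegeneracy.

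Part (4) follows at once. Dense image makes the complement empty, so (3) applies vacuously; nondegenerate $\omega$ satisfies the hypothesis of (3) directly. The reverse implication in (4) needs (1) in these cases, or alternatively the observation that for dense image every pointwise kernel vector can be handled as in (1).

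For part (5), take a nonzero smooth kernel section $X$ on a nonempty open $U\subseteq N\smallsetminus\overline{\operatorname{im}F}$. Multiply by a bump function supported in a smaller open set on which $X$ is nonzero somewhere, and extend by zero to obtain $v_N$ on $N$. Then $v_N$ vanishes near $\overline{\operatorname{im}F}$, so $(v_N,0)$ is an $F$-pair. Moreover $\io_{v_N}\omega=0$, since the bump function times a kernel section remains in the kernel. Hence $(v_N,0)$ is a nonzero element of $\mathfrak K(F,\rvp)$ for any $\eta$, because the $M$-component is zero.

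The displayed formula \eqref{eq:snd-formula} summarizes (2), (3) and (5).
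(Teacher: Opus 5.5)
Your arguments for (2), (3), (5) and for the implication weak $\Rightarrow$ strong in (4) are the paper's own. In (5) you improve on it slightly: $v_N\mapsto(v_N,0)$ lands in $\mathfrak K(F,\rvp)$ with no hypothesis on $\eta$, so the paper's detour through (2) is unnecessary.

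The gap is the one you flagged, namely (1), and with it the implication strong $\Rightarrow$ weak that the word ``equivalent'' in (4) requires. Neither you nor the paper proves that implication: (3) only gives weak $\Rightarrow$ strong. Strong nondegeneracy constrains only $F$-pairs with $\io_v\rvp\equiv0$. An $F$-pair $v$ through a weak-kernel vector $w$, however, has $\io_v\rvp$ vanishing only at $m$ and $F(m)$. The paper's proof crosses this point with the phrase ``extending $w$ by a pair whose contraction vanishes identically''. The stated hypothesis supplies only some $F$-pair through $w$, not one with vanishing contraction. Your repairs do not help. A local extension with $\io_{v_N}\omega=0$ and $\io_{v_M}\eta=0$ near $m$ need not exist. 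And ``strong nondegeneracy forbids nonzero smooth kernel $F$-pairs'' is simply the definition, not a weakening of (1).

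In fact (1) is false, so the gap cannot be closed. Take $n=1$, $M=N=\R$, $F=\id$, $\omega=0$ and $\eta=t\,dt$.
\begin{itemize}
\item $\dF\rvp=(0,-dt\wedge dt)=0$, so $\rvp$ is closed.
\item The $F$-pairs are $(v,v)$, with $\io_{(v,v)}\rvp=(0,-t\,v)$. This vanishes only if $v=0$ on $\{t\neq0\}$, hence everywhere by continuity. So $\rvp$ is strongly nondegenerate.
\item The vector $w=\partial_t|_0$ satisfies $\io_{dF(w)}\omega=0$ and $\io_w\eta|_0=0$. So weak nondegeneracy fails.
\item No local field $v$ through $w$ has $\io_v\rvp=0$, which also rules out your proposed repair.
\end{itemize}
Here $F$ is an immersion, a submersion and has dense image. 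The example therefore refutes (1) and the dense-image clause of (4). Keep the same $\eta$ but take $F\colon\R\to\mathrm{pt}$, where $\omega=0\in\Om^2(\mathrm{pt})$ is trivially nondegenerate; this refutes the other clause of (4).

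What is actually provable is the content you and the paper share:
\begin{itemize}
\item (2), (3) and (5);
\item (4) with ``equivalent'' weakened to ``weak implies strong'';
\item (1) only under the stronger hypothesis that each weak-kernel vector extends to an $F$-pair with identically vanishing contraction. This is what the paper's argument tacitly uses.
\end{itemize}
For the same reason, read \eqref{eq:snd-formula} as a mnemonic, not an equivalence. Strong nondegeneracy does not even force pointwise nondegeneracy of $\omega$ off $\overline{\operatorname{im}F}$. For example, take $M=\varnothing$ and $\omega=(x^2+y^2)\,dx\wedge dy$ on $\R^2$: this is strongly nondegenerate although $\omega$ degenerates at the origin. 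The converse (5) needs a smooth local section of $\ker\omega$, and degeneracy on a set with empty interior does not provide one.
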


\begin{proof}
(1) Let $m\in M$, $w\in T_mM$ satisfy the two vanishing conditions of
Definition~\ref{def:weak-strong}(i), and suppose $w=v_M(m)$ for some $F$-pair
$v$. The conditions say $\io_v\rvp$ vanishes at $m$ and at $F(m)$; extending
$w$ by a pair whose contraction vanishes identically and applying strong
nondegeneracy gives $w=0$. (The hypothesis on extensions is what makes this a
genuine implication; without it the two conditions are logically independent,
one being pointwise and the other global.)

(2) is Lemma~\ref{lem:kernel-shape}: under weak nondegeneracy,
$\mathfrak K(F,\rvp)\cong\mathfrak k(F,\omega)$, and strong nondegeneracy is
the vanishing of the left-hand side.

(3) Let $v_N\in\mathfrak k(F,\omega)$. On the open set
$U:=N\smallsetminus\overline{\operatorname{im}F}$ the form $\omega$ is
nondegenerate and $\io_{v_N}\omega=0$, so $v_N|_U=0$. On
$\overline{\operatorname{im}F}$ we have $v_N=0$ by continuity, since $v_N$
vanishes on the dense subset $\operatorname{im}F$ of that closed set. As
$N=U\sqcup\overline{\operatorname{im}F}$, we get $v_N=0$, and (2) applies.

(4) If $\operatorname{im}F$ is dense then
$N\smallsetminus\overline{\operatorname{im}F}=\varnothing$ and (3) applies
vacuously; if $\omega$ is nondegenerate everywhere then
$\mathfrak k(F,\omega)\subseteq\Gamma(\ker\omega)=0$.

(5) Let $X$ be a nonzero smooth section of $\ker\omega$ over a nonempty open
$U\subseteq N\smallsetminus\overline{\operatorname{im}F}$, and let $\chi$ be a
bump function supported in $U$ with $\chi\not\equiv0$ on the locus where
$X\neq0$. Then $\chi X$, extended by zero, is a nonzero element of
$\mathfrak k(F,\omega)$: it lies in $\ker\omega$ pointwise, and it vanishes on
$\operatorname{im}F$ because its support misses
$\overline{\operatorname{im}F}$. By (2), strong nondegeneracy fails.
\end{proof}

\begin{example}[Weak but not strong]\label{ex:weak-not-strong}
Take $n=1$, $M=\R$ with coordinate $t$ and $\eta=dt$, $N=\R^2$ with $\omega=0$,
and $F(t)=(t,0)$. Then $d\omega=0$ and $F^*\omega=0=d\eta$, so $\rvp=(0,dt)$
is $\dF$-closed. Weak nondegeneracy holds, since
$\io_{\partial_t}\eta=1\neq0$. But $\omega=0$ is maximally degenerate off the
image, and $\mathfrak k(F,\omega)$ consists of \emph{all} vector fields on
$\R^2$ vanishing on the $x$-axis --- an infinite-dimensional space. Strong
nondegeneracy fails as badly as possible, and by
Lemma~\ref{lem:kernel-shape} a relative Hamiltonian determines its $F$-pair only
up to that space.
\end{example}

\begin{remark}[Why the two are so different]\label{rem:why-different}
Formula \eqref{eq:snd-formula} says the gap between the two notions is supported
entirely on $N\smallsetminus\overline{\operatorname{im}F}$, a region the weak
condition cannot see, by construction. This is not a defect of the weak
condition: it is what makes the weak condition intrinsic to the geometry
\emph{carried by the arrow}, and it is the reason for the $F$-independence
recorded in Proposition~\ref{prop:source-independent} below. Strong
nondegeneracy, by contrast, is a condition on $N$ as much as on $F$, and it is
the correct hypothesis exactly when one needs a Hamiltonian $F$-pair to be a
well-defined \emph{vector field on $N$}, rather than merely along the image.
\end{remark}

\subsection{The relative kernel is an ideal, and the brackets do not see it}

The gap between the two notions is harmless for the $L_\infty$-structure. This is
the main positive result of the section and it explains why the literature has
been able to work with the weak condition.

\begin{proposition}\label{prop:kernel-ideal}
Let $\rvp$ be $\dF$-closed and let $\XF_\rvp:=\{v\in\XF:\Lder_v\rvp=0\}$ be
the Lie algebra of $\rvp$-preserving $F$-pairs. Then:
\begin{enumerate}[label=\textup{(\roman*)},leftmargin=2.2em]
\item $\mathfrak K(F,\rvp)\subseteq\XF_\rvp$;
\item $\mathfrak K(F,\rvp)$ is an ideal of $\XF_\rvp$;
\item every multicontraction of $\rvp$ annihilates $\mathfrak K(F,\rvp)$: if
one of $v_1,\dots,v_k$ lies in $\mathfrak K(F,\rvp)$, then
$\io(v_1\wedge\dots\wedge v_k)\rvp=0$.
\end{enumerate}
\end{proposition}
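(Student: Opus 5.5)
The three claims follow from the relative Cartan calculus, used in the order (iii), (i), (ii).

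For (iii), we use the convention $\io(v_1\wedge\dots\wedge v_k)=\io_{v_k}\cdots\io_{v_1}$ and the fact that relative contractions graded-commute: $\io_u\io_v=-\io_v\io_u$. This holds componentwise from \eqref{eq:reliota}, since each component is an ordinary contraction up to a fixed sign. Suppose $v_j\in\mathfrak K(F,\rvp)$. We permute $v_j$ to the first slot, which costs at most a sign, and obtain $\pm\io_{v_k}\cdots\widehat{\io_{v_j}}\cdots\io_{v_1}\io_{v_j}\rvp$. This vanishes because $\io_{v_j}\rvp=0$. The case $k=1$ is the definition.

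For (i), we apply the magic formula $\Lder_v=\dF\io_v+\io_v\dF$ from \cite[Prop.~3.5]{RelHMM}. For $v\in\mathfrak K(F,\rvp)$ this gives $\Lder_v\rvp=\dF(\io_v\rvp)+\io_v(\dF\rvp)=0$, since the first term vanishes by the kernel condition and the second by closedness. So $v\in\XF_\rvp$. We also note that $\XF_\rvp$ is a Lie subalgebra of $\XF$, because $\Lder_{[u,v]}=[\Lder_u,\Lder_v]$. This is a standard consequence of the Cartan identities recorded there, and it lets us speak of ideals at all.

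For (ii), take $u\in\XF_\rvp$ and $v\in\mathfrak K(F,\rvp)$. The identity $[\Lder_u,\io_v]=\io_{[u,v]}$ gives
\[
\io_{[u,v]}\rvp=\Lder_u\io_v\rvp-\io_v\Lder_u\rvp=0-0=0 .
\]
Hence $[u,v]\in\mathfrak K(F,\rvp)$. Linearity of $\mathfrak K(F,\rvp)$ is clear. Since $\mathfrak K(F,\rvp)\subseteq\XF_\rvp$ by (i), it is an ideal.

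No step is a real obstacle. The only points needing care are the sign bookkeeping in (iii) and the subalgebra property of $\XF_\rvp$. For both we rely on the relative Cartan identities holding on the nose, as cited from \cite{RelHMM}; the sign in (iii) never matters, since the conclusion is vanishing.
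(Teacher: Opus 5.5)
Your proof is correct and follows the paper's argument essentially step for step: (i) uses the relative magic formula together with $\dF\rvp=0$, (ii) uses the identity $[\Lder_u,\io_v]=\io_{[u,v]}$, and (iii) moves the kernel vector to the innermost contraction slot. The differences are cosmetic: you verify explicitly that relative contractions anticommute and that $\XF_\rvp$ is closed under brackets, and the paper takes both for granted.
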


\begin{proof}
(i) If $\io_v\rvp=0$ then, by the relative magic formula and $\dF\rvp=0$,
$\Lder_v\rvp=\dF\io_v\rvp+\io_v\dF\rvp=0$.

(ii) Let $u\in\XF_\rvp$ and $\kappa\in\mathfrak K(F,\rvp)$. Using
$[\Lder_u,\io_\kappa]=\io_{[u,\kappa]}$ from the relative Cartan calculus,
\[
  \io_{[u,\kappa]}\rvp
  =\Lder_u\io_\kappa\rvp-\io_\kappa\Lder_u\rvp
  =0-0=0,
\]
so $[u,\kappa]\in\mathfrak K(F,\rvp)$. Taking $u\in\mathfrak K$ shows in
particular that $\mathfrak K$ is a Lie subalgebra.

(iii) Multicontractions anticommute up to sign, so the factor lying in
$\mathfrak K$ may be moved to the innermost position, where it contracts
$\rvp$ to zero.
\end{proof}

\begin{corollary}[The $L_\infty$-structure needs only weak nondegeneracy]
\label{cor:linfty-weak}
Assume $\rvp$ weakly nondegenerate. Then for a relative Hamiltonian $\sigma$
the Hamiltonian $F$-pair $v_\sigma$ is determined up to $\mathfrak K(F,\rvp)$,
and all the brackets
$l_k=\vs(k)\,\io(v_{\sigma_1}\wedge\dots\wedge v_{\sigma_k})\rvp$ of
$\Lie(F,\rvp)$ are independent of the choices. The same applies to the
component equations \eqref{eq:componenteqs}, to the Noether identity
\eqref{eq:noether}, and to the charges \eqref{eq:charge}.
\end{corollary}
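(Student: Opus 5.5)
The plan is to reduce every claim to Proposition~\ref{prop:kernel-ideal}(iii) together with linearity of contraction in each slot. First, the ambiguity of $v_\sigma$. If $v$ and $v'$ are two Hamiltonian $F$-pairs for the same $\sigma$, then $\io_{v-v'}\rvp=-\dF\sigma+\dF\sigma=0$. Hence $v-v'\in\mathfrak K(F,\rvp)$, and conversely adding any element of $\mathfrak K(F,\rvp)$ to $v_\sigma$ gives another Hamiltonian pair. Weak nondegeneracy enters only through Lemma~\ref{lem:kernel-shape}, which identifies this ambiguity concretely as $\mathfrak k(F,\omega)$, i.e.\ pairs $(v_N,0)$ supported off the image. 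This description is not needed for the invariance argument itself.

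Next, the brackets. I would replace each $v_{\sigma_i}$ by $v_{\sigma_i}+\kappa_i$ with $\kappa_i\in\mathfrak K(F,\rvp)$, and expand $\io\bigl((v_{\sigma_1}+\kappa_1)\wedge\dots\wedge(v_{\sigma_k}+\kappa_k)\bigr)\rvp$ multilinearly. Every term other than the original one contains at least one $\kappa_i$, so it vanishes by Proposition~\ref{prop:kernel-ideal}(iii). Thus $l_k$ on degree-zero elements is choice-independent. For higher-degree inputs the brackets of $\Lie(F,\rvp)$ are either $\dF$ or zero, or contractions by the same Hamiltonian pairs, and the same expansion applies. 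The component equations \eqref{eq:componenteqs} only contract $\rvp$ by fundamental pairs $v_{x_i}$, which are fixed by the action, so they involve no choice. If one lets the lifted pairs vary by kernel elements, the same multilinear expansion shows the term $\io(v_{x_1}\wedge\dots\wedge v_{x_k})\rvp$ is unchanged.

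The Noether identity and the charges are the main obstacle, because \eqref{eq:noether} contains $\Lder_{v_\sigma}f_1(x)$ and $\io_{v_\sigma}f_1(x)$. There $v_\sigma$ contracts a relative form other than $\rvp$, so Proposition~\ref{prop:kernel-ideal}(iii) does not apply directly. My first attempt is to note that in the proof of Theorem~\ref{thm:noether} the right-hand side arose as $\dF\io_{v_\sigma}f_1(x)-\io_{v_\sigma}\io_{v_x}\rvp$. The second term is choice-independent by the bracket argument, so replacing $v_\sigma$ by $v_\sigma+\kappa$ changes both sides of \eqref{eq:noether} by the same amount $\Lder_\kappa f_1(x)=\dF\io_\kappa f_1(x)+\io_\kappa\dF f_1(x)$. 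Here the last term is $-\io_\kappa\io_{v_x}\rvp=0$. Hence the identity holds for every choice, with a primitive that shifts by the $\dF$-exact term $\dF\io_\kappa f_1(x)$.

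For the charges, $\kappa=(v_N,0)$ with $v_N$ vanishing on $\operatorname{im}F$. Therefore $\frac{d}{dt}Q_x$ is still zero by the Stokes argument of Corollary~\ref{cor:charges} for each choice, because the identity holds for each choice. In this sense conservation is choice-independent. The flow itself depends on $v_\sigma$, so I would state the claim as ``conserved along the flow of any admissible choice.'' I would also check whether the paper intends the stronger claim that $Q_x(t)$ itself agrees across choices. That stronger claim seems to need an argument that the flows differ only by kernel flows, which preserve $\rvp$ by Proposition~\ref{prop:kernel-ideal}(i) but do not obviously fix the transported cycle.
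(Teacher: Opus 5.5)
Your treatment of the core claims matches the paper's proof. Two Hamiltonian pairs for $\sigma$ differ by an element of $\mathfrak K(F,\rvp)$. Multilinearity together with Proposition~\ref{prop:kernel-ideal}(iii) kills every term containing a kernel argument, so the brackets $l_k$ are choice-independent. As you note, weak nondegeneracy is never actually used for this, in your argument or in the paper's.

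You go beyond the paper on the Noether identity. The paper disposes of it, and of the charges, by asserting that every listed expression is built from $\dF$ and multicontractions of $\rvp$ with Hamiltonian pairs. You are right that this does not cover $\Lder_{v_\sigma}f_1(x)$ and $\io_{v_\sigma}f_1(x)$. Your computation gives the accurate statement: replacing $v_\sigma$ by $v_\sigma+\kappa$ shifts both sides of \eqref{eq:noether} by $\dF\io_\kappa f_1(x)$, because $\io_\kappa\dF f_1(x)=-\io_\kappa\io_{v_x}\rvp=0$.

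This shift is genuinely nonzero for $n\geq2$. Take $n=2$ with the following data:
\begin{itemize}
\item $N=\R^2$ with coordinates $(a,b)$ and $\omega=0$;
\item $M=\R^2$ with $\eta=dp\wedge dq$, and $F(p,q)=(p,0)$;
\item $\R$ translating $q$ on $M$ and acting trivially on $N$;
\item $f_1(x)=(x\,da,0)$ and $\sigma=0$.
\end{itemize}
Then both $0$ and $\kappa=(b\,\partial_a,0)$ are Hamiltonian pairs for $\sigma$, yet $\Lder_\kappa f_1(x)=(x\,db,0)\neq0$. So what is choice-independent is the validity of the identity, not its individual terms. Your formulation is the correct reading of the corollary's ``the same applies''. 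Your remark that the component equations involve only the fixed fundamental pairs $v_{x_i}$, hence no choice at all, is likewise sharper than the paper's blanket sentence.

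The loose end you leave about the charges closes in one line, with no comparison of flows needed. Conservation holds along the flow of every admissible choice. Hence $Q_x(t)=Q_x(0)=\pair{f_1(x)}{Z}$ for each choice and each $t$ in that choice's interval of definition, and $Q_x(0)$ involves no flow at all. So the charges agree across choices as numbers, which is the strongest form of the claim, and it follows directly from what you already proved.
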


\begin{proof}
Two Hamiltonian pairs for the same $\sigma$ differ by an element of
$\mathfrak K(F,\rvp)$, by definition. Changing one argument of a
multicontraction by such an element changes the value by a multicontraction with
a kernel argument, which vanishes by
Proposition~\ref{prop:kernel-ideal}(iii). All the listed expressions are built
from $\dF$ and from multicontractions of $\rvp$ with Hamiltonian pairs.
\end{proof}

\begin{remark}[Strongification]\label{rem:strongification}
Proposition~\ref{prop:kernel-ideal} says that the geometry sees only the
quotient Lie algebra $\XF_\rvp/\mathfrak K(F,\rvp)$, on which the induced
pairing with $\rvp$ is by construction injective. One may therefore always
``strongify'' a weakly nondegenerate relative structure by passing to this
quotient; nothing in $\Lie(F,\rvp)$ changes. What is lost is the ability to
speak of $v_\sigma$ as an honest vector field on $N$, which is precisely what
the two results audited below require.
\end{remark}

\subsection{Two corrections}

We now identify the places where the stronger hypothesis is used, and restate
them.

\begin{remark}[Correction to
Proposition~\ref{prop:highercharges}]\label{rem:fix-higher}
The proof of Proposition~\ref{prop:highercharges} argues that
$\io_{[v_x,w]}\rvp=0$ implies $[v_x,w]=0$, and attributes this to ``relative
nondegeneracy''. That step is exactly strong nondegeneracy, not the weak
condition of Definition~\ref{def:weak-strong}(i). The corrected statement is:
\emph{assume $\rvp$ strongly nondegenerate}, or, equivalently and more
usefully in practice, assume $\rvp$ weakly nondegenerate and $F$ of dense
image \textup{(}Theorem~\ref{thm:weak-vs-strong}(4)\textup{)}. Under weak
nondegeneracy alone one obtains only $[v_x,w]_M=0$ and $[v_x,w]_N=0$ along
$\operatorname{im}F$, by Lemma~\ref{lem:kernel-shape}; this suffices for the
conservation statement, since by
Proposition~\ref{prop:kernel-ideal}(iii) the residual ambiguity is annihilated by
every contraction against $\rvp$, but it does not give $[v_x,w]=0$ as vector
fields.
\end{remark}

\begin{remark}[Correction to
Theorem~\ref{thm:rigidity}(iv)]\label{rem:fix-rigidity}
Part (iv) of Theorem~\ref{thm:rigidity} asserts that under nondegeneracy each
Hamiltonian admits a \emph{unique} Hamiltonian $F$-pair, justified by
``injectivity of the contraction map on $F$-pairs'' --- which is the definition
of strong nondegeneracy. So the hypothesis there should read \emph{strongly
nondegenerate}. Parts (i), (ii) and (iii) of that theorem are unaffected: they
use only $H^0(\Om(F))=0$ and no nondegeneracy whatsoever, which is worth
emphasizing, since it means the rigidity phenomenon is topological rather than
symplectic in origin. Under weak nondegeneracy alone, statements (a) and (b) of
part (iv) remain valid verbatim, and the assignment
$\sigma\mapsto v_\sigma$ is well defined into
$\XF_\rvp/\mathfrak K(F,\rvp)$.
\end{remark}

\subsection{The two conditions at the edges}

Section~\ref{sec:edges} showed that the weak condition degenerates awkwardly at
the target edge. The strong condition repairs this, and fails at the source edge
in a way that is equally informative.

\begin{proposition}[Target edge]\label{prop:strong-target}
Let $M=\varnothing$. Then weak nondegeneracy is vacuous
\textup{(}Proposition~\ref{prop:edge-nondeg}(i)\textup{)}, while
$\operatorname{im}F=\varnothing$ gives
$\mathfrak k(F,\omega)=\Gamma(\ker\omega)$, so
\[
  \rvp\ \text{strongly nondegenerate}
  \iff
  \ker\omega=0
  \iff
  (N,\omega)\ \text{absolutely $n$-plectic}.
\]
Thus the target edge recovers absolute multisymplectic geometry on the nose ---
but only for the strong condition. This is the correct repair of
Proposition~\ref{prop:edge-nondeg}(i).
\end{proposition}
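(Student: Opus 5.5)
The plan is to reduce everything to Lemma~\ref{lem:kernel-shape} and Theorem~\ref{thm:weak-vs-strong}, after unwinding what the edge $M=\varnothing$ does to each object. \textbf{Step 1 (weak condition is vacuous).} When $M=\varnothing$ the quantifier ``for every $m\in M$'' in Definition~\ref{def:weak-strong}(i) ranges over the empty set, so weak nondegeneracy holds automatically. This is Proposition~\ref{prop:edge-nondeg}(i), which we simply cite.

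\textbf{Step 2 (shape of the relative kernel).} With $M=\varnothing$, an $F$-pair is just a vector field $v_N$ on $N$, because the $F$-relatedness condition is empty. The relative contraction is $\io_v\rvp=(\io_{v_N}\omega,0)$, since $\Om^\bullet(\varnothing)=0$. Hence $\mathfrak K(F,\rvp)=\{v_N:\io_{v_N}\omega=0\}$. The condition $v_N|_{\operatorname{im}F}=0$ in $\mathfrak k(F,\omega)$ is vacuous because $\operatorname{im}F=\varnothing$. This gives $\mathfrak k(F,\omega)=\Gamma(\ker\omega)$, the smooth sections of the kernel of $v\mapsto\io_v\omega$, and agrees with Lemma~\ref{lem:kernel-shape}, whose hypothesis holds by Step~1. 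By Theorem~\ref{thm:weak-vs-strong}(2), strong nondegeneracy is therefore equivalent to $\Gamma(\ker\omega)=0$. This is the first ``$\iff$'', with ``$\ker\omega=0$'' read, as in Theorem~\ref{thm:weak-vs-strong}(5) and in the formula $\mathfrak k=\Gamma(\ker\omega)$, as a statement about the kernel of contraction on vector fields on $N$.

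\textbf{Step 3 (identification with absolute $n$-plecticity).} Recall that an absolute $n$-plectic form is a closed form $\omega$ with $v\mapsto\io_v\omega$ injective. The implication ``absolutely $n$-plectic $\Rightarrow$ strongly nondegenerate'' is immediate. If $\io_{v_N}\omega=0$, then pointwise injectivity at each $p\in N$ gives $v_N(p)=0$, so $v_N=0$. This is also Theorem~\ref{thm:weak-vs-strong}(4), second clause. For the converse, suppose $\omega_p$ has a nonzero kernel vector $w$ at some $p$. I would apply Theorem~\ref{thm:weak-vs-strong}(5) with $\overline{\operatorname{im}F}=\varnothing$. For this I need a nonzero smooth kernel section on an open set, obtained by extending $w$ to a local section of $\ker\omega$ near $p$. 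This extension is straightforward on the open set where the rank of $\ker\omega$ is locally constant, since there $\ker\omega$ is a smooth subbundle. That set is open and dense, by lower semicontinuity of the rank of $v\mapsto\io_v\omega$.

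\textbf{Main obstacle.} The hard part is points $p$ where the kernel rank jumps, so that $w$ need not extend to a smooth kernel section. There I would first try to use that $\omega$ is closed of the given degree $n+1$ and that the $n$-plectic condition in the observables literature (Rogers) is what is needed in practice. Concretely, the relevant notion of ``absolutely $n$-plectic'' for $\Lie(F,\rvp)$ is uniqueness of Hamiltonian vector fields, and that uniqueness is exactly $\Gamma(\ker\omega)=0$. If the pointwise version is intended, I would state the equivalence on the open dense constant-rank locus and flag the jump locus explicitly as the place where the identification needs this interpretive convention.
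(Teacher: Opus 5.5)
Your Steps 1--2 and the easy direction of Step 3 coincide with the paper's argument. The paper's proof is a one-line appeal to Lemma~\ref{lem:kernel-shape} and Theorem~\ref{thm:weak-vs-strong}(5), asserting that $\Gamma(\ker\omega)$ vanishes ``precisely when $\omega$ is nondegenerate''. The obstacle you flag at rank jumps is genuine, and neither closedness nor an interpretive convention removes it: the implication from strong nondegeneracy to pointwise nondegeneracy is false. Take $n=1$, $M=\varnothing$, $N=\R^2$ and $\omega=(x^2+y^2)\,dx\wedge dy$. This form is closed for degree reasons. If $\io_v\omega=0$, then $v$ vanishes on $\R^2\smallsetminus\{0\}$, hence everywhere by continuity. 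So $\rvp=(\omega,0)$ is strongly nondegenerate in the sense of Definition~\ref{def:weak-strong}(ii), yet $\omega_0=0$ and $(\R^2,\omega)$ is not symplectic. Whichever way you read ``$\ker\omega=0$'', one of the two displayed equivalences fails. Redefining ``absolutely $n$-plectic'' as $\Gamma(\ker\omega)=0$ is not legitimate either, since the paper (Proposition~\ref{prop:edge-nondeg}(ii)) and Rogers use pointwise injectivity. The paper's proof has exactly your gap: Theorem~\ref{thm:weak-vs-strong}(5) needs a nonzero smooth kernel section on an open set, and no such section exists when the degeneracy locus is nowhere dense.

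Your constant-rank argument actually proves the correct statement, and you should state it outright instead of hedging. Let $D=\{p\in N:\ker\omega_p\neq0\}$; it is closed because $\dim\ker\omega_p$ is upper semicontinuous. If $D$ has nonempty interior, the open dense constant-rank locus meets that interior in a nonempty open set. There $\ker\omega$ is a smooth subbundle of positive rank, and a bump-function multiple of a local section is a nonzero element of $\Gamma(\ker\omega)$. If $D$ has empty interior, any $v$ with $\io_v\omega=0$ vanishes on the dense set $N\smallsetminus D$, hence identically. Thus at the target edge, strong nondegeneracy is equivalent to $\omega$ being nondegenerate on an open dense subset of $N$. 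That is strictly weaker than absolute $n$-plecticity, so the edge does not recover multisymplectic geometry on the nose unless Definition~\ref{def:weak-strong}(ii) is strengthened to a pointwise condition. The same example shows that the ``$=$'' in \eqref{eq:snd-formula} holds only as an implication from right to left.
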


\begin{proof}
Immediate from Lemma~\ref{lem:kernel-shape} and
Theorem~\ref{thm:weak-vs-strong}(5): with empty image, the constraint
$v_N|_{\operatorname{im}F}=0$ is vacuous, so $\mathfrak k$ is the full space of
sections of $\ker\omega$, which vanishes precisely when $\omega$ is
nondegenerate.
\end{proof}

\begin{proposition}[Source edge and its $F$-independence]
\label{prop:source-independent}
Let $\rvp=(0,\eta)$ have vanishing target component, with
$\eta\in\Om^n(M)$ closed and $F\colon M\to N$ arbitrary. Then:
\begin{enumerate}[label=\textup{(\roman*)},leftmargin=2.2em]
\item $\rvp$ is weakly nondegenerate if and only if $(M,\eta)$ is absolutely
$(n{-}1)$-plectic. This holds for \emph{one} choice of $(F,N)$ if and only if it
holds for \emph{every} choice, including $N=\mathrm{pt}$; in particular no
immersion, embedding or submersion hypothesis on $F$ is needed;
\item $\rvp$ is strongly nondegenerate if and only if, in addition,
$F$ has dense image. In particular strong nondegeneracy is \emph{not}
$F$-independent.
\end{enumerate}
\end{proposition}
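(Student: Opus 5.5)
For part (i) I would unwind Definition~\ref{def:weak-strong}(i) with $\omega=0$. The first condition $\io_{dF_m(w)}\omega|_{F(m)}=0$ then holds automatically for every $w\in T_mM$, so weak nondegeneracy says exactly that $\io_w\eta|_m=0$ forces $w=0$. This is pointwise injectivity of $w\mapsto\io_w\eta$ on each $T_mM$, i.e.\ that the closed $n$-form $\eta$ is $(n{-}1)$-plectic on $M$. Closedness is consistent: $\dF(0,\eta)=(0,F^*0-d\eta)=(0,-d\eta)$, so $\dF$-closedness of $\rvp$ is $d\eta=0$. The resulting condition involves neither $F$ nor $N$, so it holds for one choice of $(F,N)$ if and only if it holds for every choice, $N=\mathrm{pt}$ included. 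No hypothesis on $F$ enters, because the $dF$ term is multiplied by $\omega=0$.

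For part (ii) I would invoke Theorem~\ref{thm:weak-vs-strong}(2): under weak nondegeneracy, strong nondegeneracy is equivalent to $\mathfrak k(F,0)=0$. With $\omega=0$, the condition $\io_{v_N}\omega=0$ is empty, so
\[
  \mathfrak k(F,0)=\{v_N\in\mathfrak X(N):v_N|_{\operatorname{im}F}=0\}.
\]
If $\operatorname{im}F$ is dense, continuity gives $v_N=0$ (this is Theorem~\ref{thm:weak-vs-strong}(4)). If it is not dense, the open set $N\smallsetminus\overline{\operatorname{im}F}$ is nonempty. Every tangent vector lies in $\ker\omega=TN$, so a nonzero local vector field there, cut off by a bump function, gives a nonzero element of $\mathfrak k$; this is Theorem~\ref{thm:weak-vs-strong}(5). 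Hence, under weak nondegeneracy, strong nondegeneracy holds iff $F$ has dense image.

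I read ``in addition'' as presupposing weak nondegeneracy. I would still check that strong nondegeneracy alone implies weak nondegeneracy here, so that the ``if and only if'' stands without that hypothesis. This step is the main obstacle, since Theorem~\ref{thm:weak-vs-strong}(1) requires tangent vectors to extend to $F$-pairs, which fails for general $F$. If $w\in T_mM$ has $\io_w\eta|_m=0$, I would try to build a vector field $v_M$ near $m$ with $\io_{v_M}\eta\equiv0$ and pair it with some $v_N$. That is hopeless for arbitrary $F$, because strong nondegeneracy only tests global $F$-pairs. So I would state (ii) with weak nondegeneracy assumed, as the phrase ``in addition'' indicates. The final remark on $F$-dependence then follows by comparing two maps with the same $(M,\eta)$: a dense-image map (e.g.\ $N=\mathrm{pt}$) versus a map into $N=M\times\R$ via $m\mapsto(m,0)$.
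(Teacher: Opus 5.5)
Your proposal is correct and follows the paper's proof essentially verbatim. In (i), the $\omega$-slot of the weak test is vacuous, leaving $\ker\eta=0$, which mentions neither $F$ nor $N$. In (ii), $\mathfrak k(F,0)$ is the space of vector fields on $N$ vanishing on $\operatorname{im}F$; it is zero exactly when $\operatorname{im}F$ is dense, and Theorem~\ref{thm:weak-vs-strong}(2) finishes.

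Your reading of ``in addition'' as presupposing weak nondegeneracy is also the paper's, since its proof goes through Theorem~\ref{thm:weak-vs-strong}(2), which assumes it. Your caution is justified: take $M=N=\R$, $F=\id$, $n=1$ and $\rvp=(0,x\,dx)$. Every $F$-pair $(v,v)$ with $x\,v\equiv0$ vanishes, so $\rvp$ is strongly but not weakly nondegenerate. Hence the unconditional ``only if'' fails, and so does Theorem~\ref{thm:weak-vs-strong}(1), even though here every tangent vector extends to an $F$-pair.
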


\begin{proof}
(i) With $\omega=0$ the first condition of
Definition~\ref{def:weak-strong}(i) is vacuous, so weak nondegeneracy reads
$\io_w\eta=0\Rightarrow w=0$, i.e.\ $\ker\eta=0$, which is exactly absolute
$(n{-}1)$-plecticity of $\eta$ (an $m$-plectic form has degree $m+1$, and $\eta$
has degree $n$). This condition mentions neither $F$ nor $N$, whence the
independence.

(ii) With $\omega=0$ one has $\ker\omega=TN$, so
$\mathfrak k(F,\omega)$ is the space of all vector fields on $N$ vanishing on
$\operatorname{im}F$. This vanishes precisely when $\operatorname{im}F$ is dense:
if it is dense, continuity forces $v_N=0$; if not, a bump vector field supported
in the nonempty open complement of the closure is a nonzero element. Now apply
Theorem~\ref{thm:weak-vs-strong}(2).
\end{proof}

\begin{remark}\label{rem:wurzbacher}
Proposition~\ref{prop:source-independent}(i) is the sharp, $F$-independent form
of the relative-to-absolute specialization. It is worth recording why the
independence is automatic here and not elsewhere: when $\omega=0$ the entire
target contributes to the kernel, so the first slot of the nondegeneracy test
carries no information at all and the condition collapses onto $M$. Earlier
formulations of this specialization carried an embedding hypothesis on $F$;
part (i) shows that no such hypothesis is needed, and part (ii) shows where the
hypothesis was really coming from --- it is needed for the \emph{strong}
condition, and there density, not embedding, is the right requirement.
\end{remark}

\subsection{Quasi-Hamiltonian geometry: the distinction is invisible}

Finally we explain why the literature has never had to make this distinction.

\begin{corollary}\label{cor:qham-nondeg}
Let $G$ be compact and semisimple, $\eta_G$ its Cartan $3$-form, and
$(M,\omega_M,\mu)$ a quasi-Hamiltonian $G$-space with relative $2$-plectic
structure $\rvp=(\eta_G,\omega_M)\in\Om^3(\mu)$. Then $\eta_G$ is
nondegenerate as a $2$-plectic form on $G$, and consequently weak and strong
nondegeneracy of $\rvp$ coincide.
\end{corollary}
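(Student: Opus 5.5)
The plan has two steps: first prove that $\eta_G$ is nondegenerate as a $2$-plectic form on $G$, then invoke Theorem~\ref{thm:weak-vs-strong}(4), whose second clause says that weak and strong nondegeneracy coincide whenever the target form is nondegenerate on all of $N$. Here $N=G$ and the target form is $\eta_G$.

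Nondegeneracy of $\eta_G$ is pointwise and left-invariant, so I reduce to the identity. With the usual normalization
\[
\eta_G=\tfrac1{12}\ip{\thL}{[\thL,\thL]},
\]
one has $\eta_G|_e(x,y,z)=c\,\ip{x}{[y,z]}$ for a nonzero constant $c$; its exact value is irrelevant. By left-invariance, $\io_X\eta_G|_g=0$ for $X=(L_g)_*x$ amounts to $\ip{x}{[y,z]}=0$ for all $y,z\in\g$. Using $\Ad$-invariance of the inner product, this becomes $\ip{[x,y]}{z}=0$ for all $z$, so by nondegeneracy of the inner product $[x,y]=0$ for all $y$. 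Hence $x$ lies in the centre $\mathfrak z(\g)$, which is $0$ because $\g$ is semisimple, and so $x=0$. This shows $\ker\eta_G=0$ at every point, i.e.\ $\eta_G$ is $2$-plectic.

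With this in hand, Theorem~\ref{thm:weak-vs-strong}(4) applies directly. Equivalently, $\mathfrak k(\mu,\eta_G)\subseteq\Gamma(\ker\eta_G)=0$, and Theorem~\ref{thm:weak-vs-strong}(2) gives the equivalence. No assumption on the image of $\mu$ is needed.

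The only step that needs care is the identification of $\eta_G$ at the identity with $\ip{x}{[y,z]}$ up to a nonzero scalar, in the paper's conventions from \cite{RelHMM}; I would cite the standard formula from \cite{AMM} for this. It is also worth noting that semisimplicity is used exactly once, to kill the centre. For a group with a torus factor, $\eta_G$ degenerates along $\mathfrak z(\g)$, so the corollary genuinely needs that hypothesis.
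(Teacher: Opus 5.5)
Your proposal is correct and follows the paper's proof. You identify $\eta_G$ in the left trivialization with a nonzero multiple of $\ip{x}{[y,z]}$ and use semisimplicity to get $\ker\eta_G=0$ (you phrase this as $x\in\mathfrak z(\g)=0$, the paper as $x\perp[\g,\g]=\g$, which are the same fact under an $\Ad$-invariant inner product), then invoke Theorem~\ref{thm:weak-vs-strong}(4). One small precision: your alternative via Theorem~\ref{thm:weak-vs-strong}(2) gives only weak $\Rightarrow$ strong, but that suffices here because $\rvp$ is relative $2$-plectic (i.e.\ weakly nondegenerate) by hypothesis, so both conditions hold.
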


\begin{proof}
For $X\in\g$ one has $\eta_G(X,Y,Z)=\ip{X}{[Y,Z]}$ in the left trivialization,
so $\io_X\eta_G=0$ forces $X$ to be orthogonal to $[\g,\g]$. For $\g$
semisimple, $[\g,\g]=\g$, so $X=0$ and $\ker\eta_G=0$. Now apply
Theorem~\ref{thm:weak-vs-strong}(4).
\end{proof}

\begin{example}[Where it becomes visible]\label{ex:abelian-qham}
The corollary fails as soon as $G$ has an abelian factor. Take $G=U(1)$, so
$\eta_G=0$, and let $M$ be a symplectic manifold with trivial $G$-action and
constant moment map $\mu\equiv e$. Axioms (QH1)--(QH2) hold trivially, and
$\rvp=(0,\omega_M)$ is weakly nondegenerate precisely because $\omega_M$ is
symplectic. But $\operatorname{im}\mu=\{e\}$ is not dense in $U(1)$, so by
Proposition~\ref{prop:source-independent}(ii) strong nondegeneracy fails, and
$\mathfrak K(\mu,\rvp)$ is the space of vector fields on $U(1)$ vanishing at
$e$. Hamiltonian $F$-pairs are then determined only along the identity, and
Remarks~\ref{rem:fix-higher} and \ref{rem:fix-rigidity} apply. This is the
smallest example in which the two notions genuinely differ inside the
quasi-Hamiltonian world.
\end{example}

\begin{remark}[Summary and recommendation]\label{rem:nondeg-summary}
The audit may be summarized as follows. Weak nondegeneracy suffices for: the
definition of $\Lie(F,\rvp)$ and all its brackets; the component equations of
relative homotopy moment maps; the Noether identity and the conserved charges of
Section~\ref{sec:noether}; parts (i)--(iii) of the rigidity theorem, which in
fact need no nondegeneracy at all; and the source-edge specialization, whose
$F$-independence is a genuine feature of the weak condition. Strong
nondegeneracy is needed for: uniqueness of Hamiltonian $F$-pairs as vector
fields, hence for Theorem~\ref{thm:rigidity}(iv) and
Proposition~\ref{prop:highercharges}; and for the target edge to recover
absolute multisymplectic geometry. Since the two coincide whenever $F$ has
dense image or $\omega$ is nondegenerate --- in particular for every
quasi-Hamiltonian space over a semisimple group --- we recommend stating results
with the weak hypothesis and invoking Theorem~\ref{thm:weak-vs-strong}(4) where
uniqueness of vector fields is required, rather than assuming the strong
condition throughout.
\end{remark}

\section{Outlook}\label{sec:outlook}


We close by indicating three directions in which the applications developed here
invite continuation.

\subsection*{Hydrodynamics and linking numbers}
For an ideal fluid on a $3$-manifold, the volume form and the vorticity define
multisymplectic data whose homotopy comoment maps encode helicity and higher
linking numbers \cite{MitiSpera}. Knots and linked configurations are naturally
\emph{relative} cycles for inclusions of tubular domains, and the conserved
charges of Section~\ref{sec:noether} motivate a systematic relative treatment of
boundary vorticity and of invariants of links relative to domain boundaries.

\subsection*{Dirac-geometric interpretation}
Quasi-Hamiltonian spaces are equivalently Dirac morphisms into the Cartan--Dirac
structure on $G$. The relative $2$-plectic description of
Section~\ref{sec:qham} points toward a possible higher analogue: relative $n$-plectic maps as
morphisms into ``Cartan--Dirac'' objects of higher standard Courant algebroids,
with $\Lie(F,\rvp)$ as the algebra of admissible observables (compare
\cite{Zambon}). The rigidity phenomena of Section~\ref{sec:rigidity} may then
acquire a Dirac-theoretic explanation.

\subsection*{Reduction and quantization}
Sections~\ref{sec:prequantization} and \ref{sec:qham} provide two complementary ingredients for
a geometric quantization program for relative $n$-plectic maps: relative
integrality provides the prequantum data (line bundles for $n=1$, relative gerbes
for $n=2$), and the canonical relative homotopy moment maps provide the symmetry
data. Carrying out reduction and polarization in this setting -- for instance,
recovering the Verlinde formulas from the relative $2$-plectic geometry of
$D(G)^{\circledast h}$ along the lines of \cite{MeinrenkenLectures} -- is a
natural and, we believe, tractable objective.

\subsection*{Shifted symplectic geometry, AKSZ models, and extended field
theory}
The applications of this paper --- topological action functionals, prequantum
data, conserved charges attached to relative cycles --- are the classical,
tangent-level manifestations of structures that the derived and higher
literature treats intrinsically, and the relative framework should be their
truncation. Four connections seem especially concrete.

\emph{Shifted symplectic geometry.} A relative $n$-plectic structure
$\rvp=(\omega,\eta)$ on $F\colon M\to N$ is the mapping-cone shadow of a
Lagrangian structure on $F$ into an $(2{-}n)$-shifted symplectic $N$, in the
sense of Pantev--To\"en--Vaqui\'e--Vezzosi \cite{PTVV}: $\omega$ is the ambient
shifted form and $\eta$ the isotropic trivialization exhibiting $F$ as
Lagrangian. In this reading the relative integration pairing of
Section~\ref{sec:integration} is the tangent version of the AKSZ pairing on the
mapping stack $[\,\Sigma,N]$ \cite{Calaque}, and the topological actions of
Section~\ref{sec:actions} are its values on relative fundamental classes.

\emph{AKSZ sigma models.} The topological action $S_\rvp(u,v)$ of
Section~\ref{sec:actions} is, in the language of \cite{AKSZ,Roytenberg}, the
classical action of a sigma model whose target carries a degree-shifted
symplectic structure and whose worldvolume has boundary conditions valued in
$M$; the Wess--Zumino term is the boundary coupling. Making the QP-manifold
underlying $\Om^\bullet(F)$ explicit would realize the present constructions as
the observables and boundary conditions of a genuine field theory.

\emph{Higher gauge theory.} For integral $\rvp$ the relative gerbe of
Section~\ref{sec:prequantization} is the higher-gauge-theoretic field on the
target trivialized on the source; the conserved charges of
Section~\ref{sec:noether} would then be interpreted as the Noether charges of a higher gauge theory
with boundary, and the bulk--boundary splitting~\eqref{eq:bulkboundary-diagram}
is anomaly inflow in the sense of higher differential cohomology
\cite{FiorenzaRogersSchreiber}.

\emph{Extended TQFT.} Together these suggest that a relatively integral
$(F,\rvp)$ determines an extended, fully local topological field theory in
the sense of Baez--Dolan and Lurie \cite{Lurie}, assigning the prequantum data
to codimension one and the relative observable algebra $\Lie(F,\rvp)$ to
codimension two, with the two-edge structure ($M=\varnothing$, $N=\mathrm{pt}$)
realizing its two boundary sectors. The relative Hilbert space attached to the
circle would then be the invariant sought in the quantization program above.
These are, at present, conjectural bridges; the theorems of this paper provide
their classical shadows, which is the evidence we can currently offer.

\subsection*{Acknowledgements}

The author is deeply grateful to Derek Krepski for many stimulating discussions, insightful suggestions, and unwavering support throughout the development of this work. Several of the questions addressed in this paper originated in conversations surrounding the author's PhD thesis and its defense, during which the broader potential of relative multisymplectic geometry beyond its foundational aspects first became evident.

\subsection{Three precise problems}\label{subsec:problems}
We isolate three questions that seem to us both well posed and within reach,
in preference to broader programmatic statements.

\begin{enumerate}[label=\textup{(P\arabic*)},leftmargin=2.9em]
\item \emph{Realizing prescribed prequantization levels.} By
Proposition~\ref{prop:level-cyclic} every $\dF$-closed relative form has a
level group $k_0\Z$. Which pairs $(F,k_0)$ occur? Concretely: given a smooth map
$F\colon M\to N$ between closed manifolds and an integer $k_0\geq1$, is there a
$\dF$-closed $\rvp\in\Om^{n+1}(F)$ with prequantization level exactly $k_0$?
Theorem~\ref{thm:level-group} reduces this to a realization question for
finitely generated subgroups of $\R$ as relative period groups, and
Corollary~\ref{cor:level-monotone} shows the answer is constrained by
functoriality.

\item \emph{Vanishing of the defect homomorphism.} Corollary~\ref{cor:edge-periods}
shows $\Theta_\rvp=0$ at both edges. Characterize the maps $F$ for which
$\Theta_\rvp$ vanishes for \emph{every} $\dF$-closed $\rvp$ of a given degree.
By Theorem~\ref{thm:period-criterion}(iii) injectivity of $F_*$ on $H_n$ is
sufficient; is it necessary? A counterexample would identify a genuinely new
source of relative invariants, and a proof would show that the entire relative
content in degree $n$ is carried by $\ker F_*$.

\item \emph{Strong nondegeneracy in the quasi-Hamiltonian range.} By
Corollary~\ref{cor:qham-nondeg} the weak and strong conditions agree for
quasi-Hamiltonian spaces over a semisimple group, and
Example~\ref{ex:abelian-qham} shows they differ as soon as an abelian factor is
present. Determine the precise class of compact groups $G$ for which every
quasi-Hamiltonian $G$-space is strongly nondegenerate; the expected answer is
that $G$ be semisimple, equivalently $\mathfrak z(\mathfrak g)=0$, but the
argument in Corollary~\ref{cor:qham-nondeg} uses density of the moment-map
image and we have not determined whether that hypothesis can be removed.
\end{enumerate}

\appendix

\section{The two edges: recovering the absolute theories}
\label{sec:edges}

The relative framework has two degenerate boundaries,
\[
  M=\varnothing
  \qquad\text{and}\qquad
  N=\mathrm{pt},
\]
at which the arrow $F$ collapses and one expects to recover absolute
multisymplectic geometry on the target, respectively on the source. Both
expectations are correct, but neither is quite as automatic as it looks: at the
first edge the nondegeneracy condition becomes vacuous, and at the second the
degree drops by one and the integration pairing changes sign. This section
records the two specializations precisely, verifies each application of
Sections~\ref{sec:actions}--\ref{sec:qham} against them, and isolates the
statements that are \emph{not} recovered --- these being exactly the genuinely
relative content of the theory.

Throughout, $\rvp=(\omega,\eta)\in\Om^{n+1}(F)$ is $\dF$-closed, with
$\omega\in\Om^{n+1}(N)$ and $\eta\in\Om^{n}(M)$.

\begin{remark}[A notational warning]\label{rem:letters}
The order of the two slots is fixed once and for all: the first component lives
on the target $N$, the second on the source $M$. The \emph{letters}, however,
follow tradition and therefore swap in the quasi-Hamiltonian sections: in
Sections~\ref{sec:background}\,(quasi-Hamiltonian part) and \ref{sec:qham} we
write $\rvp=(\eta,\omega)$ with $\eta$ the Cartan $3$-form on $N=G$ and
$\omega$ the invariant $2$-form on $M$, following \cite{AMM}. No convention has
changed; only the names have. Readers checking signs should always identify the
slot, never the letter.
\end{remark}

\subsection{The edge complexes}

\begin{proposition}[Structure at the edges]\label{prop:edge-complexes}
\begin{enumerate}[label=\textup{(\roman*)},leftmargin=2.2em]
\item \textup{(Target edge, $M=\varnothing$.)} Since $\Om^\bullet(\varnothing)=0$
and $C_\bullet(\varnothing)=0$,
\[
  \Om^k(F)=\Om^k(N),\quad \dF=d,
  \qquad
  C_k(F)=C_k(N),\quad \partial_F=\partial,
\]
and the pairing \eqref{eq:pairing} is $\pair{(\alpha,0)}{(S,0)}=\int_S\alpha$.
Hence $H^\bullet(\Om(F))\cong H^\bullet_{\mathrm{dR}}(N)$ and
$H_\bullet(F)\cong H_\bullet(N;\Z)$, compatibly with the pairing: the relative
theory \emph{is} the absolute de Rham theory of the target.
\item \textup{(Source edge, $N=\mathrm{pt}$.)} Since $\Om^0(\mathrm{pt})=\R$ and
$\Om^k(\mathrm{pt})=0$ for $k\geq1$, the relative complex is
\[
  \Om^0(F)=\R
  \ \xrightarrow{\ c\,\mapsto\,(0,c)\ }\
  \Om^1(F)=C^\infty(M)
  \ \xrightarrow{\ -d\ }\
  \Om^2(F)=\Om^1(M)
  \ \xrightarrow{\ -d\ }\ \cdots,
\]
that is, $\Om^{k}(F)=\Om^{k-1}(M)$ for $k\geq1$ with differential $-d$,
augmented by the constants in degree $0$. Consequently
\[
  H^0(\Om(F))=0\ \ (M\neq\varnothing),
  \qquad
  H^k(\Om(F))\cong\widetilde H^{k-1}_{\mathrm{dR}}(M)\quad(k\geq1),
\]
reduced cohomology, and the pairing is
$\pair{(0,\beta)}{(S,T)}=-\int_T\beta$: at this edge the relative pairing is the
\emph{negative} of the absolute one.
\end{enumerate}
\end{proposition}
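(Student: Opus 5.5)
This is a direct unwinding of the definitions \eqref{eq:cone}, \eqref{eq:relchains} and \eqref{eq:pairing}, treated edge by edge.

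\emph{Target edge.} Take $M=\varnothing$. Then $\Om^{k-1}(\varnothing)=0$ and $C_{k-1}(\varnothing)=0$, so every relative form is $(\alpha,0)$ and every relative chain is $(S,0)$. Because $F^*$ and $F_*$ land in or come from the zero group, the formulas $\dF(\alpha,0)=(d\alpha,0)$ and $\partial_F(S,0)=(\partial S,0)$ follow immediately. The two complexes are therefore literally $(\Om^\bullet(N),d)$ and $(C_\bullet(N),\partial)$. The pairing reduces to $\int_S\alpha$, since the second integral is over the empty chain. Hence $H^\bullet(\Om(F))\cong H^\bullet_{\mathrm{dR}}(N)$ and $H_\bullet(F)\cong H_\bullet(N;\Z)$. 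These identifications are identities of complexes, so they automatically respect the pairing.

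\emph{Source edge.} Take $N=\mathrm{pt}$. Then $\Om^k(F)=\Om^k(\mathrm{pt})\oplus\Om^{k-1}(M)$. This is $\R$ in degree $0$, with $\Om^{-1}(M)=0$, and $0\oplus\Om^{k-1}(M)$ in degrees $k\ge1$. I would then compute $\dF$ in each case.
\begin{itemize}
\item On a constant $c$: $dc=0$ and $F^*c=c$ is the constant function on $M$, so $\dF c=(0,c)$. This is the augmentation.
\item On $(0,\beta)$: $\dF(0,\beta)=(0,F^*0-d\beta)=(0,-d\beta)$, so the differential is $-d$.
\end{itemize}

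For cohomology, first note that $H^0(\Om(F))$ is the kernel of $c\mapsto c|_M$. This kernel is zero when $M\neq\varnothing$, consistent with \eqref{eq:H0}. Next, the complex $\Om^{\bullet-1}(M)$ with differential $-d$ has the same cocycles and coboundaries as the de Rham complex, because the sign does not affect kernels or images. The only change from de Rham is in degree $k=1$, where $\Om^1(F)=C^\infty(M)$: the closed elements are the locally constant functions, and the image of the augmentation is the global constants. The quotient of these is $\widetilde H^0_{\mathrm{dR}}(M)$. For $k\ge2$ the augmentation plays no role and the quotient is just $H^{k-1}_{\mathrm{dR}}(M)$, which agrees with reduced cohomology in positive degree. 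As an independent check I would compare this with the long exact sequence \eqref{eq:LES}, using $H^{>0}(\mathrm{pt})=0$.

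Finally, the pairing on $(0,\beta)$ and $(S,T)$ is $\int_S0-\int_T\beta=-\int_T\beta$, which is the sign reversal claimed. I expect no real obstacle. The only places needing care are the degree-$1$ augmentation that produces \emph{reduced} cohomology, and keeping track of the sign in $-d$. The sign is harmless for cohomology but is the source of the sign in the pairing.
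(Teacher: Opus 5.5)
Your proposal is correct and follows the paper's proof essentially line by line. At $M=\varnothing$ both cones collapse onto their $N$-summands. At $N=\mathrm{pt}$ you compute $\dF c=(0,c)$ and $\dF(0,\beta)=(0,-d\beta)$; the augmentation then yields $\widetilde H^0_{\mathrm{dR}}(M)$ in degree one, the sign of $-d$ does not affect cohomology, and the result is checked against the long exact sequence. The pairing claims are read off directly from \eqref{eq:pairing}, just as in the paper.
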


\begin{proof}
(i) The empty manifold carries only the zero vector space of forms and the zero
group of chains, so both mapping cones collapse onto their $N$-summands, with
$\dF(\alpha,0)=(d\alpha,F^*\alpha)=(d\alpha,0)$ and
$\partial_F(S,0)=(\partial S,0)$.

(ii) For $k\geq1$ one has $\Om^k(\mathrm{pt})=0$, so
$\Om^k(F)=\Om^{k-1}(M)$ and $\dF(0,\beta)=(0,-d\beta)$. In degree $0$,
$\Om^0(F)=C^\infty(\mathrm{pt})=\R$ and $\dF c=(dc,F^*c)=(0,c)$, the constant
function $c$ on $M$; this is injective when $M\neq\varnothing$, giving
$H^0(\Om(F))=0$, which is \eqref{eq:H0} at this edge. In degree $1$,
$H^1(\Om(F))=\ker(d)/\R=H^0_{\mathrm{dR}}(M)/\R=\widetilde H^0(M)$, and in
degrees $k\geq2$ the sign $-d$ does not affect cohomology, so
$H^k(\Om(F))\cong H^{k-1}_{\mathrm{dR}}(M)$. This agrees with the long exact
sequence \eqref{eq:LES}, whose $\mathrm{pt}$-terms vanish above degree $0$. The
pairing statement is immediate from \eqref{eq:pairing} with $\alpha=0$.
\end{proof}

\begin{remark}[The sign at the source edge]\label{rem:edge-sign}
The minus sign in $\pair{(0,\beta)}{(S,T)}=-\int_T\beta$ is not an artefact to be
removed: it is forced by the cone convention \eqref{eq:cone}, under which the
source slot always enters the Stokes formula with the opposite orientation, and
it is the same sign that produces the bulk-\emph{minus}-boundary structure of
charges in Remark~\ref{rem:bulkboundary}. Its practical effect is nil for
integrality statements, since $P_\rvp=-P^M_\eta$ and $-P^M_\eta\subseteq\Z$
iff $P^M_\eta\subseteq\Z$, but it must be carried through any identification of
$L_\infty$-structures or of moment-map components.
\end{remark}

\subsection{Nondegeneracy at the edges: two cautions}

Here the two edges behave differently, and the difference is easy to miss.

\begin{proposition}[Nondegeneracy at the edges]\label{prop:edge-nondeg}
Recall that $(F,\rvp)$ is relative $n$-plectic when
$w\mapsto\bigl(\io_{TF(w)}\omega,\ \io_w\eta\bigr)$ is injective on each
$T_mM$.
\begin{enumerate}[label=\textup{(\roman*)},leftmargin=2.2em]
\item \textup{(Target edge.)} If $M=\varnothing$ the condition is \emph{vacuous}:
every $\dF$-closed $\rvp$ is relative $n$-plectic. Thus the target edge
recovers \emph{pre}-multisymplectic geometry on $N$ --- closed $(n{+}1)$-forms
--- and not multisymplectic geometry. To recover the latter one must impose
nondegeneracy of $\omega$ on $N$ separately; it is not implied by the relative
axioms. Section~\ref{sec:nondeg} identifies the right repair: the \emph{strong}
nondegeneracy of Definition~\ref{def:weak-strong}(ii) does recover absolute
multisymplectic geometry at this edge \textup{(}Proposition
\ref{prop:strong-target}\textup{)}.
\item \textup{(Source edge.)} If $N=\mathrm{pt}$ then $\omega=0$, the closedness
condition reduces to $d\eta=0$, and the nondegeneracy condition becomes
$\ker\eta=0$. Hence
\[
  \text{$(F,\rvp)$ relative $n$-plectic}
  \iff
  \text{$(M,\eta)$ absolutely $(n{-}1)$-plectic},
\]
a shift of one in the degree: a relative $n$-plectic map over a point is an
absolute $(n{-}1)$-plectic manifold, because $\eta$ has degree $n$ and an
$m$-plectic form has degree $m+1$.
\end{enumerate}
\end{proposition}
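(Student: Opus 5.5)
Both parts follow by unwinding the pointwise condition of Definition~\ref{def:weak-strong}(i) at the two degenerate arrows; the plan is to treat each edge separately, using only the definition and the description of the edge complexes in Proposition~\ref{prop:edge-complexes}.

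For (i), take $M=\varnothing$. The nondegeneracy test quantifies over all $m\in M$ and $w\in T_mM$, so there is nothing to check and the condition holds vacuously. Meanwhile, by Proposition~\ref{prop:edge-complexes}(i), $\dF$-closedness is just $d\omega=0$. Hence the relative $n$-plectic structures at this edge are exactly the closed $(n{+}1)$-forms on $N$.

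To show that nondegeneracy of $\omega$ is genuinely not implied, I will give one example, say $\omega=0$ on any $N$. This is closed, hence relative $n$-plectic by the above, but $\omega$ is degenerate. The pointer to strong nondegeneracy is then a citation of Proposition~\ref{prop:strong-target} and needs no new argument.

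For (ii), take $N=\mathrm{pt}$. Since $\Om^{n+1}(\mathrm{pt})=0$ for $n\geq0$, we get $\omega=0$. Then $\dF\rvp=(0,F^*0-d\eta)=(0,-d\eta)$, so closedness reduces to $d\eta=0$.

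In the nondegeneracy test, $\io_{TF(w)}\omega=0$ holds automatically. The map $w\mapsto(\io_{TF(w)}\omega,\io_w\eta)$ is therefore injective on $T_mM$ if and only if $w\mapsto\io_w\eta$ is injective, that is, $\ker\eta_m=0$ for every $m$. Together with $d\eta=0$, this is precisely the statement that $\eta\in\Om^n(M)$ is absolutely $(n{-}1)$-plectic, an $m$-plectic form having degree $m+1$. This is the same computation as Proposition~\ref{prop:source-independent}(i) with $N=\mathrm{pt}$.

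There is no real obstacle in either part. The only point needing care is the degree bookkeeping in (ii): the form on $M$ that carries the structure is $\eta$, of degree $n$, which produces the shift to $(n{-}1)$-plectic. A second, minor point is that for (i) "not implied" requires an explicit example rather than just the vacuity observation.
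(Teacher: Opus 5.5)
Your proposal is correct and follows the paper's proof essentially line for line. For $M=\varnothing$ the pointwise test is vacuous, and for $N=\mathrm{pt}$ the vanishing of $\omega$ kills the first component, so injectivity reduces to $\ker\eta=0$ together with $d\eta=0$. Your explicit witness $\omega=0$ for the failure of nondegeneracy is a small addition the paper leaves implicit; it needs $\dim N\geq1$, since on a zero-dimensional $N$ the zero form is trivially nondegenerate.
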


\begin{proof}
(i) A condition quantified over $m\in M$ and $w\in T_mM$ is vacuously true when
$M=\varnothing$. (ii) $\Om^{n+1}(\mathrm{pt})=0$ for $n\geq0$ forces $\omega=0$,
so $\dF\rvp=0$ reads $d\eta=0$; the first component of the nondegeneracy map
vanishes identically and the second is $w\mapsto\io_w\eta$, whose injectivity is
the definition of $(M,\eta)$ being $(n{-}1)$-plectic \cite{Rogers}.
\end{proof}

\begin{remark}[The degree shift, and why $n=1$ is empty at the source edge]
\label{rem:degree-shift}
Proposition~\ref{prop:edge-nondeg}(ii) explains a small but persistent source of
confusion. The relative theory in degree $n$ is calibrated so that the
\emph{target} carries an $(n{+}1)$-form; the source therefore carries an
$n$-form, and its absolute plectic degree is one lower. For the quasi-Hamiltonian
case $n=2$ this reads: relative $2$-plectic maps over a point are absolutely
$1$-plectic, i.e.\ presymplectic manifolds --- which is exactly the classical
statement that a quasi-Hamiltonian space for the trivial group is a manifold with
an invariant closed $2$-form. For $n=1$ the source edge is essentially empty:
$0$-plectic means a nowhere-vanishing $1$-form with trivial kernel, which forces
$\dim M=1$. The relative theory in degree one therefore has no useful source
edge, and this is precisely why the rigidity theorem of
Section~\ref{sec:rigidity} has no absolute counterpart to be compared with on
that side.
\end{remark}

\subsection{The edges are morphisms of arrows}

The two collapses are not merely notational: they are instances of
Definition~\ref{def:arrowmap}, so all of Theorem~\ref{thm:functoriality} applies
to them, and the edge comparisons below need no separate verification.

\begin{lemma}\label{lem:edge-morphisms}
Let $F\colon M\to N$ be arbitrary. Then
\[
  \Phi_{\mathrm{tgt}}=(\,\varnothing\hookrightarrow M,\ \id_N\,)\colon
  \bigl(\varnothing\to N\bigr)\longrightarrow F,
  \qquad
  \Phi_{\mathrm{src}}=(\,\id_M,\ N\to\mathrm{pt}\,)\colon
  F\longrightarrow\bigl(M\to\mathrm{pt}\bigr)
\]
are morphisms of arrows. Consequently:
\begin{enumerate}[label=\textup{(\roman*)},leftmargin=2.2em]
\item $\Phi_{\mathrm{tgt}}^*\colon\Om^\bullet(F)\to\Om^\bullet(N)$ is the
forgetful map $(\alpha,\beta)\mapsto\alpha$, and
Theorem~\ref{thm:functoriality}(iv) gives $P_\omega\subseteq P_\rvp$;
\item $\Phi_{\mathrm{src}}^*\colon\Om^\bullet(M\to\mathrm{pt})\to\Om^\bullet(F)$
sends a closed $\beta\in\Om^{n}(M)$ to $(0,\beta)$, and
Theorem~\ref{thm:functoriality}(iv) gives $P_{(0,\beta)}\subseteq -P^M_\beta$.
\end{enumerate}
\end{lemma}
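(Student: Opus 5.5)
The plan is to verify the two commuting squares directly, read off $\Phi^*$ from Theorem~\ref{thm:functoriality}(i), and then apply part (iv) of that theorem. For $\Phi_{\mathrm{tgt}}$ the source arrow is the unique map $\varnothing\to N$. The components are $\phi\colon\varnothing\hookrightarrow M$ and $\psi=\id_N$. The required identity $F\circ\phi=\psi\circ(\varnothing\to N)$ holds because both sides are maps out of the empty set, and there is only one such map to $N$. For $\Phi_{\mathrm{src}}$ the components are $\phi=\id_M$ and $\psi\colon N\to\mathrm{pt}$. The identity is $(M\to\mathrm{pt})\circ\id_M=\psi\circ F$, and it holds because any two maps into a point agree. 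Both are therefore morphisms of arrows in the sense of Definition~\ref{def:arrowmap}, and all of Theorem~\ref{thm:functoriality} applies to them.

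For (i), the formula $\Phi^*(\alpha,\beta)=(\psi^*\alpha,\phi^*\beta)$ gives $(\alpha,\phi^*\beta)$. Here $\phi^*\beta\in\Om^\bullet(\varnothing)=0$. By Proposition~\ref{prop:edge-complexes}(i) we have $\Om^\bullet(\varnothing\to N)=\Om^\bullet(N)$, so $\Phi_{\mathrm{tgt}}^*$ is the forgetful map $(\alpha,\beta)\mapsto\alpha$, and $\Phi_{\mathrm{tgt}}^*\rvp=\omega$. Theorem~\ref{thm:functoriality}(iv) then gives $P_\omega\subseteq P_\rvp$. Here $P_\omega$ means the relative periods of $(\omega,0)$ for the arrow $\varnothing\to N$, and by the edge identification of Proposition~\ref{prop:edge-complexes}(i) these are exactly the absolute periods of $\omega$. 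As a consistency check, this reproduces the inclusion in Theorem~\ref{thm:period-criterion}(i).

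For (ii), an element of $\Om^{n+1}(M\to\mathrm{pt})$ is a pair $(c,\beta)$ with $c\in\Om^{n+1}(\mathrm{pt})$. Since $n\ge0$, this space is zero, so the element is $(0,\beta)$. Its pullback is $(\psi^*0,\id_M^*\beta)=(0,\beta)$. Closedness at the source edge means $d\beta=0$, by Proposition~\ref{prop:edge-complexes}(ii), so the pulled-back form is $\dF$-closed by Theorem~\ref{thm:functoriality}(i). Theorem~\ref{thm:functoriality}(iv) then gives $P_{(0,\beta)}\subseteq P^{\mathrm{src}}_{(0,\beta)}$, the relative period group at the source edge.

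The only step requiring care is identifying that edge group with $-P^M_\beta$. By Proposition~\ref{prop:edge-complexes}(ii) the pairing there is $\pair{(0,\beta)}{(S,T)}=-\int_T\beta$. A relative $(n{+}1)$-cycle for $M\to\mathrm{pt}$ has $T\in Z_n(M)$, with the single condition $\partial S=F_*T$ in $\mathrm{pt}$. This condition is always solvable for $n\ge1$, since the point is acyclic in positive degrees. So the source-edge periods are $\{-\int_T\beta : T\in Z_n(M)\}=-P^M_\beta$, which completes (ii).

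The main point to watch is the degree-zero augmentation. When $n=0$ the $0$-cycle $T$ must have total degree zero for $F_*T$ to bound in $\mathrm{pt}$. In that case we only get containment in a subgroup of $-P^M_\beta$, which still yields the stated inclusion $P_{(0,\beta)}\subseteq -P^M_\beta$.
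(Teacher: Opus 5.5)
Your proof is correct and follows the paper's argument. Both squares commute because maps out of $\varnothing$ and maps into $\mathrm{pt}$ are unique, the induced maps are read off from Theorem~\ref{thm:functoriality}(i) together with Proposition~\ref{prop:edge-complexes}, and the inclusions come from Theorem~\ref{thm:functoriality}(iv). Your explicit identification of the source-edge period group with $-P^M_\beta$, including the degree-zero caveat for $n=0$, fills in a step the paper leaves implicit.
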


\begin{proof}
Both squares commute trivially: $F\circ(\varnothing\hookrightarrow M)$ and
$\id_N\circ(\varnothing\to N)$ are both the unique map $\varnothing\to N$, and
$(M\to\mathrm{pt})\circ\id_M=(N\to\mathrm{pt})\circ F$. The descriptions of the
induced maps are Theorem~\ref{thm:functoriality}(i) combined with
Proposition~\ref{prop:edge-complexes}, and the period statements are
Theorem~\ref{thm:functoriality}(iv).
\end{proof}

\begin{corollary}[The period criterion interpolates between the edges]
\label{cor:edge-periods}
In the notation of Theorem~\ref{thm:period-criterion}, write
$K=\ker\bigl(F_*\colon H_n(M;\Z)\to H_n(N;\Z)\bigr)$.
\begin{enumerate}[label=\textup{(\roman*)},leftmargin=2.2em]
\item At the target edge, $K=0$, the defect homomorphism vanishes, and
$P_\rvp=P_\omega$: relative integrality is exactly integrality of $\omega$ on
$N$.
\item At the source edge, $H_n(\mathrm{pt};\Z)=0$ for $n\geq1$, so $K=H_n(M;\Z)$
is everything, $P_\omega=0$, and $\Theta_\rvp[T]=-\int_T\eta$; hence
$P_\rvp=-P^M_\eta$ and relative integrality is exactly integrality of $\eta$
on $M$.
\end{enumerate}
Thus Theorem~\ref{thm:period-criterion} degenerates at the two edges to the two
classical integrality conditions, and its content in between --- the defect
homomorphism on $K$ --- is precisely the part that neither absolute theory sees.
\end{corollary}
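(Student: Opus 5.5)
The plan is to specialize Theorem~\ref{thm:period-criterion} at each edge, using Proposition~\ref{prop:edge-complexes} to describe the relative complexes and cycles. The defect homomorphism is the only object that needs care.

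For (i), $M=\varnothing$. Then $H_n(M;\Z)=0$, so $K=0$ trivially. Hence $\operatorname{im}\Theta_\rvp=0$, and part (i) of Theorem~\ref{thm:period-criterion} gives that $P_\rvp$ is the preimage of $0$ under $\R\to\R/P_\omega$. That preimage is $P_\omega$. Equivalently, this is Theorem~\ref{thm:period-criterion}(iii), since $F_*$ is injective on the zero group. The integrality statement then follows from part (ii) of the same theorem with the second condition vacuous. One could cross-check this against Lemma~\ref{lem:edge-morphisms}(i), where $\Phi_{\mathrm{tgt}}$ is an isomorphism of complexes.

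For (ii), $N=\mathrm{pt}$ and $n\ge1$. Then $H_n(\mathrm{pt};\Z)=0$, so $F_*=0$ on $H_n$ and $K=H_n(M;\Z)$. Also $H_{n+1}(\mathrm{pt};\Z)=0$ because $n+1\ge2$, so $P_\omega=0$. Independently, $\omega\in\Om^{n+1}(\mathrm{pt})=0$, so $\omega=0$. We are therefore in the situation of Theorem~\ref{thm:period-criterion}(iv). To evaluate $\Theta_\rvp[T]$, take any filling $S$ with $\partial S=F_*T$; such an $S$ exists in the point. Since $\omega=0$, $\int_S\omega=0$ and $\Theta_\rvp[T]=-\int_T\eta\in\R$. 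By part (i) of the theorem, $P_\rvp=\operatorname{im}\Theta_\rvp=\{-\int_T\eta : T\in Z_n(M)\}=-P^M_\eta$, where $P^M_\eta$ denotes the absolute periods of the closed form $\eta$ on $M$. Closedness holds because $d\eta=F^*\omega=0$. Integrality of $P_\rvp$ is then integrality of $-P^M_\eta$, equivalently of $P^M_\eta$, as noted in Remark~\ref{rem:edge-sign}.

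The main obstacle is bookkeeping rather than mathematics. One must note that the case $n=0$ is excluded by the hypothesis $n\geq1$, since $H_0(\mathrm{pt})=\Z$ would make $K$ a proper subgroup. One must also make sure the filling in the point contributes nothing. That follows because $\omega$ is identically zero, so no dependence on the choice of $S$ remains. The closing sentence of the corollary is interpretive and needs no proof beyond these two computations.
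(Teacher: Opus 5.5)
Your proposal is correct and follows the paper's proof essentially step for step. Part (i) goes through $K=0$ and Theorem~\ref{thm:period-criterion}(iii); part (ii) goes through $F_*=0$ on $H_n$, $P_\omega=0$, a filling in the point that contributes nothing to \eqref{eq:defect}, and then Theorem~\ref{thm:period-criterion}(iv). Your extra remarks, namely the exclusion of $n=0$ and the two independent reasons for $P_\omega=0$, are accurate but go beyond what the paper records.
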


\begin{proof}
(i) $H_n(\varnothing;\Z)=0$, so $K=0$ and Theorem~\ref{thm:period-criterion}(iii)
applies. (ii) $H_n(\mathrm{pt};\Z)=0$ for $n\geq1$, so $F_*=0$ and $K=H_n(M;\Z)$;
$\omega=0$ gives $P_\omega=0$, and the filling $S$ may be taken to be any chain
in the point, contributing nothing, so
$\Theta_\rvp[T]=-\int_T\eta$ by \eqref{eq:defect}. Now apply
Theorem~\ref{thm:period-criterion}(iv).
\end{proof}

\subsection{The applications at the edges}

We now run each application through both edges. Table~\ref{tab:edges}
summarizes; the discussion that follows justifies each entry and flags the
places where the specialization is not literal.

\begin{table}[htb]
\centering
\renewcommand{\arraystretch}{1.3}
\begin{tabular}{@{}p{0.20\textwidth}p{0.36\textwidth}p{0.36\textwidth}@{}}
\hline
& \textbf{Target edge} $M=\varnothing$ & \textbf{Source edge}
$N=\mathrm{pt}$\\
\hline
structure &
closed $(n{+}1)$-form on $N$; \emph{pre}-multisymplectic &
$(n{-}1)$-plectic manifold $(M,\eta)$; degree drops by one\\
A. actions &
$\int_W u^*\omega$ on closed $W$; classical topological term &
$-\int_{\partial W}v^*\eta$ on null-bordant $n$-cycles in $M$\\
B. prequantization &
classical Weil--Kostant on $N$; torsor $H^1(N;U(1))$ &
$U(1)$-valued $s$ with $d\log s=-2\pi i\eta$; torsor
$\widetilde H^0(M;U(1))$\\
C. charges &
absolute Noether charges on $(n{-}1)$-cycles in $N$ &
absolute charges on $(n{-}2)$-cycles in $M$, with a sign\\
D. rigidity &
\textbf{fails}: $H^0=\R$, Kostant--Souriau cocycle returns &
holds, but forces $\dim M=1$; vacuous in practice\\
E. quasi-Hamiltonian &
no space; only the Cartan $3$-form on $G$ &
trivial group; presymplectic manifold\\
\hline
\end{tabular}
\medskip
\caption{The five applications at the two edges. The entry in bold type is the
one where the specialization is \emph{not} a recovery but a failure, and is
therefore a sharpness witness for the hypotheses of
Theorem~\ref{thm:rigidity}; see Section~\ref{subsec:not-recovered}.}
\label{tab:edges}
\end{table}

\subsubsection*{A. Topological terms}
At the target edge a relative field is a pair $(u,v)$ with $v$ defined on
$\partial W$ and valued in $M=\varnothing$; such a $v$ exists if and only if
$\partial W=\varnothing$. So relative fields are exactly smooth maps
$u\colon W\to N$ from \emph{closed} oriented $(n{+}1)$-manifolds, with action
$\int_Wu^*\omega$, and Theorem~\ref{thm:topterm} becomes the classical statement
that a form defines a homotopy-invariant functional on such maps precisely when
it is closed. It is worth recording that the constraint $\partial W=\varnothing$
is forced, not assumed: this is the honest form of the remark opening
Example~\ref{ex:WZW}.

At the source edge $u$ is constant and $\omega=0$, so
$S_\rvp(u,v)=-\int_{\partial W}v^*\eta$ with $\eta$ a closed $n$-form on $M$.
The functional is thus the absolute topological term of $\eta$, evaluated on
those $n$-cycles in $M$ that bound in the trivial sense, i.e.\ on
$v_*[\partial W]$ for $\partial W$ the boundary of a compact $W$; homotopy
invariance is the absolute statement.

\subsubsection*{B. Relative prequantization}
At the target edge the section datum in Definition~\ref{def:relprequant} is
vacuous, and $(L,\nabla)$ is a prequantum line bundle of curvature
$-2\pi i\,\omega$ on $N$; Theorem~\ref{thm:prequantization} is the Weil--Kostant
theorem, and its torsor $H^1(F;U(1))=H^1(N;U(1))$ is the group of flat line
bundles. This is Corollary~\ref{cor:prequantexamples}(a), now with the torsor
identified as well.

At the source edge the target is a point, so $(L,\nabla)=(\mathbb{C},d)$ and the entire
datum is the unit section $s$ of the trivial bundle over $M$ with
\[
  ds=-2\pi i\,\eta\otimes s,
  \qquad\text{i.e.}\qquad
  s\colon M\to U(1)\ \text{ with }\ \tfrac{1}{2\pi i}\,d\log s=-\eta .
\]
Theorem~\ref{thm:prequantization} then asserts that such an $s$ exists exactly
when the periods of $\eta$ are integral --- the classical statement that a
closed $1$-form is the logarithmic derivative of a circle-valued function
precisely when its de Rham class is integral --- and that the solutions form a
torsor over $H^1(F;U(1))=\widetilde H^0(M;U(1))$, the locally constant
$U(1)$-valued functions modulo global constants, since a global constant is an
isomorphism of $(\mathbb{C},d)$. Both assertions are correct and classical; note that
the relative degree is $n=1$ here, so it is the \emph{source} $1$-form that is
quantized, in accordance with Proposition~\ref{prop:edge-complexes}(ii).

\subsubsection*{C. Charges}
At the target edge, relative cycles are cycles in $N$, the moment component
$f_1$ has only its $N$-slot, and \eqref{eq:charge} becomes
$Q_x=\int_Sf_1^N(x)$ over $(n{-}1)$-cycles in $N$: the Noether charges of
absolute multisymplectic geometry \cite{CFRZ,Rogers}. Theorem~\ref{thm:noether}
reduces term by term to the absolute Noether identity, all source contractions
being zero.

At the source edge, $\Om^{n-1}(F)=\Om^{n-2}(M)$ and the Hamilton equation
$\dF\sigma=-\io_{v_\sigma}\rvp$ reads
$-d\sigma_M=\io_{v_M}\eta$, i.e.\ $d\sigma_M=-\io_{v_M}\eta$: precisely Rogers'
Hamiltonian condition for the $(n{-}1)$-plectic manifold $(M,\eta)$, whose
Hamiltonian forms have degree $n-2$. The charges become
$Q_x=-\int_Tf_1^M(x)$ over $(n{-}2)$-cycles in $M$, the absolute charges up to
the global sign of Remark~\ref{rem:edge-sign}.

\subsubsection*{E. Quasi-Hamiltonian geometry}
The target edge deletes the space and leaves only the Cartan $3$-form on $G$
with its one-step extension $\widetilde\mu$; nothing is classified, but the
universal data of Proposition~\ref{prop:universality} survives, which is the
formal reason it is universal. The source edge is the case of the trivial group:
$N=G=\{e\}$ is a point, $\eta=0$, and a quasi-Hamiltonian space degenerates to a
manifold with an invariant closed $2$-form, i.e.\ a presymplectic manifold, in
accordance with Proposition~\ref{prop:edge-nondeg}(ii) and
Remark~\ref{rem:degree-shift}.

\subsection{What the edges do not recover}\label{subsec:not-recovered}

Two entries of Table~\ref{tab:edges} are not recoveries, and they are the
informative ones.

\subsubsection*{D. Rigidity fails at the target edge --- and this is sharp}
Theorem~\ref{thm:rigidity} rests on \eqref{eq:H0}: a $\dF$-closed function
vanishes when $N$ is connected and $M\neq\varnothing$. At the target edge the
second hypothesis fails, and by
Proposition~\ref{prop:edge-complexes}(i) one has $H^0(\Om(F))=H^0_{\mathrm{dR}}(N)=\R$
for connected $N$. Every conclusion of the theorem fails accordingly:
comoment maps are unique only modulo $\gdual$-valued constants, the
Kostant--Souriau cocycle reappears as the obstruction to strictness, and
equivariance becomes a genuine condition. The target edge is therefore not a
degenerate instance of Theorem~\ref{thm:rigidity} but a \emph{witness for the
sharpness of its hypotheses}: the assumption $M\neq\varnothing$ is doing all the
work, and Remark~\ref{rem:rigiditygeom} explains why --- the source pins the
target constants to zero.

At the source edge the hypothesis \eqref{eq:H0} does hold, by
Proposition~\ref{prop:edge-complexes}(ii), and the theorem is true but nearly
empty: as noted in Remark~\ref{rem:degree-shift}, relative degree $n=1$ over a
point forces $\dim M=1$ under nondegeneracy. Dropping nondegeneracy, the
statement is still meaningful and correct, and reads as follows.

\begin{corollary}[Rigidity at the source edge]\label{cor:rigidity-source}
Let $N=\mathrm{pt}$, let $M$ be a connected manifold with a $G$-invariant closed
$1$-form $\eta$, and let $\rvp=(0,\eta)\in\Om^2(F)$. Then a comoment map
exists, is unique, and is given by
\[
  f(x)=\io_{v_x}\eta\in\R,
\]
which is a constant for every $x\in\g$; it is automatically a Lie algebra
homomorphism into the abelian Lie algebra $\R$, so $f$ vanishes on
$[\g,\g]$, and it is automatically equivariant.
\end{corollary}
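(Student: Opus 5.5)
The plan is to unwind everything at the source edge using Proposition~\ref{prop:edge-complexes}(ii), and then read off each clause, either directly or from Theorem~\ref{thm:rigidity}. With $N=\mathrm{pt}$ we have $\Om^0(F)=\R$, $\dF c=(0,c)$, and $\Om^2(F)=\Om^1(M)$. The fundamental $F$-pair is $v_x=(0,v_x^M)$, since every vector field on a point vanishes. By \eqref{eq:reliota},
\[
  \io_{v_x}\rvp=\bigl(0,\,-\io_{v_x^M}\eta\bigr).
\]
The comoment equation \eqref{eq:comoment} for a linear map $f\colon\g\to\R$ therefore reads $(0,f(x))=(0,\io_{v_x^M}\eta)$. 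So a comoment map exists if and only if each function $\io_{v_x^M}\eta$ is constant on $M$, and in that case it is forced to be $f(x)=\io_{v_x}\eta$. Linearity in $x$ is automatic, because $x\mapsto v_x$ is linear.

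The key step is existence, namely that $\io_{v_x}\eta$ is constant. I would differentiate $G$-invariance of $\eta$ along $\exp(-tx)$ to get $\Lder_{v_x}\eta=0$. Cartan's formula together with $d\eta=0$ then gives $d\io_{v_x}\eta=0$, and connectedness of $M$ makes the function constant. Uniqueness is Theorem~\ref{thm:rigidity}(i), whose hypotheses ($N=\mathrm{pt}$ connected, $M\neq\varnothing$) are in force. It also follows from the displayed equation itself, which determines $f(x)$ completely.

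For strictness I would use Theorem~\ref{thm:rigidity}(ii), which gives $f([x,y])=\{f(x),f(y)\}$. Here the bracket is
\[
  \io(v_x\wedge v_y)\rvp=\io_{v_y}\io_{v_x}(0,\eta).
\]
After the first contraction we land in $\Om^1(F)=C^\infty(M)$, whose $M$-slot is the $0$-form $\io_{v_x}\eta$. Contracting a $0$-form gives zero, and the $N$-slot is zero throughout, so the bracket vanishes identically. Hence $f([x,y])=0$: $f$ kills $[\g,\g]$ and is a homomorphism into the abelian Lie algebra $\R$. As an independent check, $f([x,y])=\io_{[v_x,v_y]}\eta=\Lder_{v_x}\io_{v_y}\eta-\io_{v_y}\Lder_{v_x}\eta$; the first term is a derivative of a constant and the second vanishes by invariance.

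For equivariance I would invoke Theorem~\ref{thm:rigidity}(iii). Its group-level argument uses only uniqueness and $g^*\rvp=\rvp$, and $g^*(0,\eta)=(0,g_M^*\eta)=(0,\eta)$ holds by hypothesis. Concretely, $G$ acts trivially on $\Om^0(F)=\R$, so equivariance says $f(\Ad_gx)=f(x)$. This can be seen directly from $v_{\Ad_gx}=g_*v_x$:
\[
  \io_{g_*v_x}\eta=(g^{-1})^*\bigl(\io_{v_x}g^*\eta\bigr)=(g^{-1})^*f(x)=f(x),
\]
the last step because $f(x)$ is a constant. Infinitesimally, $\Lder_{v_x}f(y)=0=f([x,y])$, consistent with the strictness step.

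The main obstacle is only bookkeeping. First, the sign: the relative contraction carries a minus sign on the source slot, and this must cancel the sign in $\dF c=(0,+c)$ to give $f=+\io_{v_x}\eta$. Second, I must make sure that infinitesimal invariance $\Lder_{v_x}\eta=0$ really follows from $G$-invariance, which it does since $v_x$ is generated by the one-parameter subgroup $\exp(-tx)$. Everything else is a direct specialization of Theorem~\ref{thm:rigidity}.
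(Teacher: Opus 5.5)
Your proposal is correct and follows essentially the same route as the paper. You unwind the comoment equation at the source edge to $(0,f(x))=(0,\io_{v_x}\eta)$, obtain existence from $d\,\io_{v_x}\eta=\Lder_{v_x}\eta-\io_{v_x}d\eta=0$ together with connectedness of $M$, and read off uniqueness, strictness and equivariance from Theorem~\ref{thm:rigidity}(i)--(iii), with the same degree-count and Cartan-identity checks that $f([x,y])=0$. Your additional direct verification of $f(\Ad_g x)=f(x)$ via $v_{\Ad_g x}=g_*v_x$ is correct and simply spells out what the paper leaves to the theorem.
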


\begin{proof}
The comoment equation \eqref{eq:comoment} reads
$\dF f(x)=(0,f(x))$ on the left and
$-\io_{v_x}\rvp=(0,\io_{v_x}\eta)$ on the right, so it says exactly that the
function $\io_{v_x}\eta$ is the constant $f(x)$. That $\io_{v_x}\eta$ is locally
constant is automatic: $d\,\io_{v_x}\eta=\Lder_{v_x}\eta-\io_{v_x}d\eta=0$ by
invariance and closedness; connectedness of $M$ makes it constant, so a comoment
map exists. Uniqueness, strictness and equivariance are
Theorem~\ref{thm:rigidity}(i)--(iii), whose hypotheses hold by
Proposition~\ref{prop:edge-complexes}(ii). Concretely, the bracket
$\{f(x),f(y)\}=\io(v_x\wedge v_y)\rvp$ vanishes because $\eta$ has degree one
and cannot absorb two contractions; strictness therefore reads $f([x,y])=0$,
which is also visible directly from
$\io_{v_{[x,y]}}\eta=\Lder_{v_x}\io_{v_y}\eta-\io_{v_y}\Lder_{v_x}\eta=0$, the
first term being the derivative of a constant.
\end{proof}

\subsubsection*{The defect homomorphism is invisible at both edges}
By Corollary~\ref{cor:edge-periods} the defect homomorphism $\Theta_\rvp$ is
zero at the target edge (because $K=0$) and reduces to the absolute period map
at the source edge (because $F_*=0$). Its genuinely relative values occur only
when $F_*$ is neither injective nor zero on $H_n$, that is, only strictly
between the edges. Example~\ref{ex:disc} is the smallest such situation, and
Example~\ref{ex:torus} shows what happens when one drifts back toward the target
edge behaviour. The same remark applies to the bulk--boundary splitting of
charges \eqref{eq:bulkboundary-diagram}, which collapses to a single term at
either edge, and to the Wess--Zumino three-level structure of
Theorem~\ref{thm:WZ}, whose first and third levels coincide at both edges and
differ only in between.

\begin{remark}[Summary]\label{rem:edge-summary}
The two edges therefore serve three distinct purposes. They are
\emph{consistency checks}: every formula of this paper must, and does, return
the known absolute formula at each edge, which is how the sign conventions of
Section~\ref{sec:intro} were verified. They are \emph{sharpness witnesses}: the
failure of rigidity at $M=\varnothing$ shows that the hypothesis
$M\neq\varnothing$ cannot be dropped. And they \emph{localize the new content}:
the defect homomorphism, the bulk--boundary splitting, and the gap between the
levels of Theorem~\ref{thm:WZ} all vanish at the edges, so whatever the relative
theory contributes beyond the absolute ones is supported strictly in the
interior.
\end{remark}

\end{document}